\documentclass[graybox]{svmult}
\usepackage{graphicx}
\usepackage{multicol}
\usepackage{multirow}
\usepackage{dcolumn}
\usepackage[figuresright]{rotating}
\usepackage{amssymb}
\usepackage{amsfonts}
\usepackage{amsmath}
\usepackage{amscd}
\usepackage{amsbsy}
\usepackage{mathrsfs}
\usepackage{bbm}
\usepackage{subfigure}
\usepackage[OT2,OT1,T1]{fontenc}
\usepackage{algorithm} 
\usepackage{algorithmicx} 
\usepackage{algpseudocode} 
\usepackage[normalem]{ulem}
\usepackage{dsfont}
\usepackage{srcltx}

\makeindex
\spnewtheorem*{questi}{Question}{\upshape\bfseries}{\itshape}
\spnewtheorem*{probl}{Problem}{\upshape\bfseries}{\itshape}
\spnewtheorem{remarks}{Remarks}{\itshape}{\rmfamily}
\spnewtheorem*{ques}{Question}{\bfseries}{\rmfamily}
\spnewtheorem*{quest}{Questions}{\upshape\bfseries}{\itshape}
\spnewtheorem{exper}{Experiment}{\bfseries}{\rmfamily}
\spnewtheorem*{defin}{Definition}{\bfseries}{\rmfamily}
\spnewtheorem*{theo}{Theorem}{\upshape\bfseries}{\itshape}
\spnewtheorem*{rem}{Remark}{\itshape}{\rmfamily}
\spnewtheorem{conject}{Conjecture}{\bfseries}{\itshape}

\newcommand{\ams}[1][]{\par\addvspace\baselineskip
 \textbf{AMS subject classification:}\enspace\ignorespaces}
\newcommand\cyr{
\renewcommand\rmdefault{wncyr}%
\renewcommand\sfdefault{wncyss}%
\renewcommand\encodingdefault{OT2}%
\normalfont \selectfont}
\DeclareTextFontCommand{\textcyr}{\cyr}
\allowdisplaybreaks
\makeatletter
\newcolumntype{d}[1]{D{.}{.}{#1}}
\makeatother
\newcommand\checkm{{^\checkmark}}  

\begin{document}

\setcounter{chapter}{17}

\title{A Distributional Approach to Generalized Stochastic Processes on Locally Compact Abelian Groups\label{chap18}}

\authorrunning{A Distributional Approach to Generalized Stochastic Processes on Locally Compact Abelian Groups}

\author{H.G. Feichtinger and W. H\"ormann}

\authorrunning{H.G. Feichtinger and W. H\"ormann}

\institute{H.G. Feichtinger \at Faculty of Mathematics, University of Vienna \\Oskar-Morgenstern-Platz 1 1090 Vienna, Austria \and
W. H{\"o}rmann \at Department of Industrial Engineering, Bogazici University,\\ 34342 Bebek, Istanbul, Turkey}

\maketitle

\section*{Dedication}

This paper is dedicated to \textit{Paul Butzer
at the occasion of his 85th birthday}.
His example and work has strongly
influenced not only the first author of this paper but
generations of mathematicians, working in approximation
theory and Fourier analysis. He also showed the younger colleagues
how important it is to be open for applied areas, not to have a
narrow scope of work, and explore different ways of understanding
and seeing mathematical facts. Several of his papers concern the
logical equivalence of fundamental facts in analysis (take
just \cite{burist84} as an example).

It is perhaps not so well known
to many familiar with his role as a leading figure in approximation
theory that he has also done significant work in probability theory (\cite{buhawe75,buha79}).
So I hope that Paul will enjoy reading this note, which indicates
that a purely~functional analytic approach
to generalized stochastic processes based on the Segal
algebra ${S_{0}(G)}$  is possible which avoids many of the technical problems associated with the usual presentation, based on vector-valued integrals
or topological vector space theory.

The material of this note
is mainly based on the PhD thesis of the second author (\cite{ho89})
and was available (only) as a technical report already for a long time.
Rising recognition of the universal usefulness of this space of
test functions (see \cite{gr01,cofelu08,fe09})
and the surrounding family of modulation spaces
has encouraged the authors to make the manuscript available
in this way to the community. In addition to the publications \cite{ke03,wa05,wa06-2} (all related to this old technical
report)  there is an upcoming series of papers by G.~Pfander
(\cite{pfwa05-1,pfzhXX-1,pfzhXX,pfzhXX-B,okpfzh11})
which is exploring in great detail
the methods and settings described in the current paper.
They develop a theory of stochastic modulation spaces, in analogy to classical (deterministic) modulation space (\cite{fe06}) in order to enable sampling of operators theory
(\cite{pf13-1,pfwaXX,kopf06})
to be applied to stochastic operators.

\section{Introduction}

Whereas ordinary functions on a locally compact group $G$ map the
group elements into the complex numbers, a stochastic process
can be understood as a mapping into a Hilbert space $\mathcal{H}$,
namely into the Hilbert space  $\mathcal{H} =   L^2_0(\varOmega,\varSigma,P),$
consisting of square integrable random variables $X(\omega)$
with expected value zero $E(X) = 0$.

The idea of a generalized function is to reduce the
knowledge about the function to that of certain averages.
It leads to the
concept of generalized functions as continuous
linear functionals on spaces
of test functions. The combination of both ideas is
the basis for generalized
stochastic processes on locally compact Abelian
(in short: LCA) groups:
Hilbert space valued bounded linear operators on spaces
of test functions on G. The properties of
the Schwartz space $\mathcal{S}(\mathbb{R}^d)$,
in particular its invariance under the
Fourier transform (defined on $L^1(\mathbb{R}^d)$),
make it a very suitable tool for the description of
generalized functions, but its generalization, the so-called
Schwartz-Bruhat space, is very complicated
(cf. \cite{re89}) and structure theory of LCA
groups is required to describe the space.

Summarized by a diagram we have the following situation:

\begin{table}
{\begin{tabular*}{237pt}{l|l@{\ }}
\hline
\multicolumn{2}{l}{From ordinary functions to generalized stochastic processes} \\
\hline
Ordinary functions on $\mathbb{R}^d$  & Stochastic processes on $\mathbb{R}^d$ \\
\hline
$\mathbb{R}^d \to \mathbb{C} $  &
$\mathbb{R}^d \to {\mathcal{H}}  $ \\
\hline
$t \mapsto f(t)$ & $t \mapsto X(t,\omega) \in  {L^2(\varOmega,\varSigma,P)} $ \\
\hline
\textit{Generalized functions}  & \textit{Generalized stochastic processes} \\
\hline
${ C_c(\mathbb{R}^d)} \to \mathbb{C} $ & $\rho: { S_0(\mathbb{R}^d)} \mapsto {\mathcal{H}}: f \mapsto \rho(f)$   \\
\hline
$k \mapsto  \sigma(f):= \int_{\mathbb{R}^d} k(x) f(x) dx $  & e.g.\ $ f \mapsto \int_{\mathbb{R}^d} X(t,\omega) k(t) dt $\\
\hline
\end{tabular*}}
{}
\end{table}

In contrast, the function space ${S_{0}(G)}$ discovered by the
first author (cf.
\cite{fe81-2,re89}) can be defined without the use of structure
theory in a simple
way for general LCA groups. Moreover it is a Banach space (which greatly
simplifies the description of the natural topology on the dual space), and
the Fourier transform maps ${S_{0}(G)}$ onto $S_{0}(\hat{G})$, the
corresponding space on the dual group $\hat{G}$, which consists
 of continuous characters $\chi$ on $G$, i.e.,  continuous
 group homomorphism from $G$ to the torus group $T = \{ z | \, |z| = 1\}
 \subset \mathbb{C}$.  Using Pontryagin's
duality theorem, it is then possible to extend the Fourier transform in order
to obtain a generalized Fourier transform from
${S_0{ '}(G)}$ onto $S_{0}'(\hat{G})$. Since the
space $\mathcal{S}(G)$ of Schwartz-Bruhat is dense in ${S_{0}(G)}$
it is clear that the
concept of tempered distributions $\sigma \in \mathcal{S}'(G)$ is more
general. However, if one is not
interested in derivatives, the concept of ${S_0{ '}(G)}$ is general enough
and has many
practical advantages, mainly due to its simplicity. Let us only mention that
for the case $G= \mathbb{T}$ the space ${S_{0}(G)}$ coincides with $A(T)$,
Wiener's algebra of absolutely convergent Fourier series,
of which it can be seen as a natural generalization for arbitrary LCA groups.

The concept of generalized stochastic processes over Euclidean spaces
has been developed in \cite{ge55,gevi64} and \cite{it54} %
where the space $\mathcal{D}$ of infinitely often differentiable functions
with compact support is used.  In \cite{ni75} the space $\mathcal{C}_c(G)$
of continuous functions with compact support serves as the space of test
functions.  The main disadvantage of these function spaces (besides
technical questions that may be overcome by developing the appropriate
integration or distribution theory) is in our opinion the fact that they
are not invariant under the Fourier transform and that they are only
topological vector spaces.  The only work on generalized stochastic
processes we know that uses a test function space that is invariant
under Fourier transform is \cite{ja79-1} for the case $G = \mathbb{R}$.
There the function space is defined over $\mathbb{C}$ or with technical difficulties over
$\mathbb{C}^n$, but it seems impossible to extend this definition to locally
compact Abelian groups. In addition, the test functions are analytic
functions   making it impossible to define the  support
for elements of  the corresponding dual space. Another alternative
approach is given in \cite{ob95}.

The observation that ${S_{0}(G)}$ is a conceptually and technically much more convenient space and that most of the relevant concepts arising in the theory of generalized stochastic processes over LCA  groups can be proved on the basis of this concept lead to this paper.  The interested reader may
find more details about generalized stochastic processes and, in
particular, the definition of the Wigner distribution for a process over
$\mathbb{R}^n$ in the thesis of the second author \cite{ho89}.

We do not claim to provide completely new results here which
cannot be found (using mostly a slightly different setting) in the literature.
However we think that our approach allows to derive and teach
such results much better and easier.
We also do not claim that our approach provides the
most general approach. In fact, the setting
based on the Schwartz(-Bruhat) space is more general,
but requires a  heavier machinery,
at least for the setting of LCA groups.

\section{Notations}

Let $G$ be a locally compact Abelian group with Haar measure $dx$. The group
operation is written as addition.  The dual group is denoted by
$\hat{G}$; $\chi_x$ denotes a character on $\hat{G}$ which can be
identified with $x \in G$. We write $C^{b}(G), C_{0}(G)$, and
$\mathcal{C}_c(G)$ 
for the spaces of continuous complex-valued functions which are
bounded, with limit zero at infinity and with compact support,
respectively. $C^{b}(G)$ and $C_{0}(G)$ are endowed with the supremum
norm $\| . \| _{\infty }$, and $\mathcal{C}_c(G)$ with the inductive limit
topology is continuously embedded into $C_{0}(G)$ as a dense
subspace.

\looseness-1 In our context, $M(G)$ will be simply considered as the Banach dual
of $C_{0}(G)$ (with the sup-norm $\| . \| _{\infty }$).
By the Riesz representation
theorem, it can be identified with the set of all bounded and
regular Borel measures with the variation norm and contains
$L^1(G)$ as closed subspace (the absolutely continuous measures).
However, for our purpose, it is more convenient to just use the
characterization of this norm via the functional norm
$ \|\mu\|_M = \sup_{\|f\|_{\infty} \leq 1} |\mu(f)|$.
The \textit{translation operator} $T_{x}$ and the
\textit{modulation operator} resp.\ the multiplication (by characters) $M_{\chi}$ are defined~by
\[
T_{x}f(y) := f(y-x), \, x \in G,
\]
\[
M_{\chi}f(y) := \chi(y)f(y), \, \chi \in \hat{G}.
\]
Over $G = R^d$ we use of course the usual pure frequencies
$\chi_s(y) = exp( 2 \pi i \langle s, y \rangle)$ and identify
$\widehat{R^d}$ with $R^d$ in this way, and write $M_s$ for
$M_{\chi_s}$.

The central object of this manuscript will be the Segal
algebra ${S_{0}(G)}$ introduced in \cite{fe81-2},
 which will be shown to be a very good substitute for the
 Schwartz space of rapidly decreasing functions for the
 setting discussed in this paper. This space has
 meanwhile gained popularity within TF analysis
 (time-frequency analysis), where it is denoted by
 $M^1(G)$ (see \cite{gr01}, Chaps.~11 and~12).
For us only a couple of basic properties of ${S_{0}(G)}$,
which has been defined for general LCA groups, will
play a role, but with the hint that these properties
are characteristic (see \cite{lo83-1}).

$S_{0}(G)$ is a subspace of the Fourier algebra $A(G)$,
defined as the image of $L^1(\hat{G})$ under the (inverse)
Fourier transform, with $\|h\|_{A(G)} = \|f\|_1$
for $ \hat{h} = f$.
We note $A_c(G) := C_c(G) \cap A(G)$
 is a dense subspace of $A(G)$.

\begin{defin}
Let  a nonzero function $k\in A(G)\cap \mathcal{C}_c(G)$, i.e.,
$k$ is continuous, with compact support and integrable
Fourier transform $\hat k$,  be arbitrary but fixed.
Then the Segal\footnote{Also called
Feichtinger's algebra in the literature, see, e.g.\
 \cite{rest00}.}
 algebra  ${S_{0}(G)}$ is defined by
\[
{S_{0}(G)}:=\{f \in A(G)\mid \; \| f\| _{S_0}:= \int_G \| T_y k\cdot f\|_A\:dy <
\infty \}.
\]
\end{defin}

Different functions $k_1$ and $k_2$ as above define the
same space with equivalent norms. Another way to describe
this definition is to say that ${S_{0}(G)}$ is the Wiener amalgam
space $W(A,L^1)(G)$ (see \cite{fe83}). Further alternative
characterizations are described below (e.g.\ replacing $k$
by a Gauss function).

There are many sufficient conditions which imply that a function
belongs to the Segal algebra ${S_{0}(G)}$, especially for $G = \mathbb{R}^d$.
The paper
\cite{gr96}
gives sufficient conditions of the form of a combined decay in time
and frequency, using weighted $L^p$-spaces, with polynomial
weights, on both sides.

\cite{fewe06} demonstrates that essentially all known
\textit{classical summability kernels}  belong to ${ S_0(\mathbb{R}^d)}$.
In most cases this is shown by demonstrating that they
belong to the subclass ${{V}}^2_1(\mathbb{R}^d)$.

All these results easily imply that
$\mathcal{S}(\mathbb{R}^d) \subset { S_0(\mathbb{R}^d)}$, which was first demonstrated by
Poguntke in \cite{po80-1}.

${S_{0}(G)}$ can be shown to be the smallest
Banach space among all Banach spaces which are isometrically
invariant under translation and character multiplication
and it contains all $f\in L^1(G)$ with compactly supported
Fourier transform. Another property of ${S_{0}(G)}$---the
invariance under Fourier transform---is
 very important for this work.

For the case $G =  \mathbb R^d$ a more convenient (equivalent)
characterization (more in the spirit of a TF-object) can be
given, using the short-time Fourier transform:

\begin{defin}
Let $f,g \in L^2(\mathbb R^d)$. The \textbf{short-time Fourier transform}

(STFT) $\mathcal{V}_g(f):
{\mathbb R^d}\times {\hat{\mathbb R}}^d\longrightarrow {\mathbb C}$
of $f$ with window $g$ is defined by
\begin{equation}\label{STFT}
(\mathcal{V}_gf)(x,\xi ):=\int_{\mathbb R^d}f(t){\overline{g(t-x)}}
e^{-2\pi i\xi \cdot t}dt = \langle f,M_\xi T_x g \rangle.
\end{equation}
\end{defin}
Using the STFT one has the following alternative definition:

\begin{defin}
Let $g_0$ be the Gauss function, i.e.,
$g_0(x):=e^{-\pi |x|^2}$.
$S_0(\mathbb R^d)$ is then given by
\begin{equation}\label{S0STFT}
S_0(\mathbb R^d):=\{f\in L^2(\mathbb R^d)\mid \|f\|_{S_0}=
\|\mathcal{V}_{g_0}f\|_{L^1(\mathbb R^{2d})}<\infty \}.
\end{equation}
\end{defin}

The Banach space ${S_{0}(G)}$
has been characterized as the smallest Banach space among all Banach
spaces which are isometrically invariant under translation and character
multiplication and containing all integrable functions $f\in L^{1}(G)$
with compactly supported Fourier transform (cf.  \cite{fe81-2} for
details).  For a summary of properties of this space (such as invariance
under the Fourier transform) the reader is referred to \cite{fe81-2} or
the survey article \cite{fe89-2}.  We will work mostly with the elements
of ${S_0{ '}(G)},$ which will be called \textbf{distributions}.
This terminology is justified because one can show that the
Schwartz space (resp.\ in general the Schwartz-Bruhat space)
is embedded into ${S_{0}(G)}$ as a dense subspace and consequently
${S_0{ '}(G)}$ is a (dual) Banach space of tempered distributions.
Note however that it is one of the main purposes of this article
to point out that the theory of GSPs can be built solely using
the Banach space ${S_{0}(G)}$ and its dual, without any recurrence
to the theory of tempered distribution, nor even topological
vector spaces or Lebesgue integration theory.

For convenience we shall write for $\sigma \in {S_0{ '}(G)}$ and
$f\in {S_{0}(G)}: \langle \sigma ,f\rangle := \sigma (f)$ for
the natural bilinear pairing of elements from dual Banach spaces.

Many of the actions which one can define on ${S_{0}(G)}$
can be extended to ${S_0{ '}(G)}$ in a natural way, e.g.\
the Fourier transform, multiplication with functions or
convolutions. In principle there are two ways for such
extensions. One way to define the unique extension
of operators well defined on ${S_{0}(G)}$ into ${S_0{ '}(G)}$
is to make use of the  ${w}^{\ast}$-density
of ${S_{0}(G)}$ within ${S_0{ '}(G)}$, but this approach---although
looking more elementary---is more cumbersome in practice.
Alternatively one can use of the duality of operators
(which is particularly attractive for operators which also
act as unitary operators on $L^2(G)$), which is more elegant
and leads to the following conventions:
%


For   $\sigma \in {S_0{ '}(G)}$ (we will call them \textit{distributions} on $G$),
we define some operators (cf. \cite{fe89-2}):
\begin{gather*}
\langle T_x\sigma ,f\rangle:= \langle\sigma ,T_{-x}f\rangle \quad
\text{for} \; x\in G;\\
\langle g\ast \sigma ,f\rangle:
=\langle\sigma , {g}\checkm \ast f \rangle \quad
\text{for} \; g \in L^1(G);\\
\langle h\sigma ,f \rangle:=\langle\sigma ,hf\rangle \quad
\text{for} \; h\in A(G);\\
\langle \hat{\sigma},f\rangle := \langle\sigma ,\hat{f}\rangle;\\
\langle {\sigma}\checkm ,f\rangle:=\langle\sigma ,{f}\checkm \rangle,
  \quad \text{where} \,  {f}\checkm(z) = f(-z).
\end{gather*}
%
Recall also that a  distribution $\sigma \in {S_0{ '}(G)}$
is called \textbf{positive}, if
\[
f\geq 0\:\Longrightarrow \:\langle\sigma ,f\rangle\geq 0.
\]

Next we summarize a few properties of tensor products.
We do not start from the abstract definition given in the
literature on Banach spaces (although this would be possible
in principle) but take a rather naive approach to tensor
products. Let us start with some  simple terminology:

Let $f,g$ be functions on $G_{1}$ and $G_{2}$,
respectively. We use the  \textbf{tensor product} symbol
$f\otimes g$ for the description of a (separable) function on $G_{1}\times G_{2}$ given by
\[
f \otimes  g(x,y) := f(x)\cdot g(y);\quad x \in  G_{1},\:y \in  G_{2}.
\]
Let $B_{1}$ and $B_{2}$ be two Banach spaces, which are
continuously embedded
into $C^{b}(G_{1})$ and $C^{b}(G_{2})$.
The \textbf{projective tensor
product} of $B_{1}$ and $B_{2}$ is defined as
\[
B_{1} \hat{\otimes } B_{2} := \left\{ f \, \Big{|} \, f = \sum^{\infty
}_{n=1}f_{n} \otimes g_{n} , \; \sum^{\infty }_{n=1}\| f_{n}\| _{B_{1}}\|
g_{n}\| _{B_{2}} < \infty \right\}.
\]
$B_{1} \hat{\otimes } B_{2}$ is a Banach space continuously
embedded in $C^{b}(G_{1}\times G_{2})$:
\[
\| f\|_{\hat{\otimes}}:= \inf  \left\{  \sum^{\infty
}_{n=1}\|
f_{n}\| _{B_{1}}\| g_{n}\| _{B_{2}} \text{ with }f
= \sum^{\infty }_{n=1}f_{n}\otimes g_{n} \right\}.
\]

It is one of the key properties of the functor
$S_0$ (assigning ${S_{0}(G)}$ to each LCA group $G$)
that direct products of groups are
mapped on the tensor product of spaces, i.e.,  that
we have the following \textit{tensor product
property} which in turn is the basis for the all
important kernel theorem for operators:
\[
S_0(G_1 \times G_2) = S_{0}(G_{1}) \hat{\otimes}  S_{0}(G_{1})
\]
(cf.\ \cite{cofelu08,lo83-1},[see Theorem~\ref{ch18:thm7}.(d) in \cite{fe81-2}]).

For the functor $G \mapsto C_0(G)$ a similar property is not valid, i.e.,
the projective  tensor product
$V_{0}(G_{1}\times G_{2}) := C_{0}(G_{1}) \hat{\otimes} C_{0}(G_{2})$
is an interesting space different from $C_0(G_{1}\times G_{2})$.
One has the following proper inclusions
\[
S_0(G_{1}\times G_{2})  \subseteq
V_{0}(G_{1}\times G_{2})  \subseteq
C_{0}(G_{1}\times G_{2}),
\]
with dense embeddings in each case.

The dual of $V_{0}(G_{1}\times G_{2})$ therefore contains
$M(G_1 \times G_2)$ as a proper subspace.
It~is called the space of \textbf{bimeasures}
$BM(G_{1}\times G_{2})$.
For the properties of bimeasures, we refer to \cite{grsc84},
but they are all elements of $S'_0(G_{1}\times G_{2})$
and hence they have Fourier transforms.

A Radon measure $\mu$
(i.e., an element of the dual of $\mathcal{C}_c(G)$) is
called \textbf{translation bounded} if for any
$k \in \mathcal{C}_c(G)$ the
$\sup_{x \in G} |\mu(T_x(k))|$ is finite.
Equivalently, $\mu$ extends to a bounded linear functional
on the Wiener algebra $W(C_0,L^1)$, which in turn
contains ${S_{0}(G)}$ as a dense subspace, and which is the
minimal Segal algebra with pointwise $C_0(G)$ structure
(see \cite{fe77-3}).

A detailed discussion of the following concepts is given in
\cite{fe84}. A bounded set $S \subseteq  M(G)$  is called \textbf{tight} if
for every $\epsilon  > 0$  there exists
  $k\in \mathcal{C}_c(G)$ such that $\Vert k\cdot \mu  - \mu \Vert_M
 \le  \epsilon $  for all $\mu \in {}$S.

A bounded, $L^1$-tight net $(e_{\gamma })_{\gamma \in \varGamma }$  in
$L^{1}(G)$ is called a bounded approximate unit for
$L^{1}(G)$ (viewed as Banach algebra with convolution
which is denoted by the symbol ``$\ast$'')
if one has for any $f \in L^1(G)$:
$\lim_{\gamma } \Vert  e_{\gamma }\ast {}f -
f \Vert _{1} \rightarrow  0. $
The (distributional) limit of $e_{\gamma}$ (to be understood in the ${w}^*$-sense)
is Dirac's delta distribution denoted by $\delta_0$.

A net $(\mu _{\gamma })_{\gamma \in \varGamma }$ in $M(G)$
 is \textbf{vaguely} convergent with limit $\mu _{0}$ if one has $\lim_{\gamma } \mu _{\gamma}(k) = \mu _{0}(k) \ \forall \, k \in \mathcal{C}_c(G).$
If the net is bounded at least in the ${S_0{ '}(G)}$ sense, then
vague convergence is equivalent to ${w}^*$-convergence within
${S_0{ '}(G)}$ because $\mathcal{C}_c(G) \cap {S_{0}(G)} = A_c(G)$ is dense
in the Banach space ${S_{0}(G)}$.
Since ${S_{0}(G)} \subset L^1(G)$ (densely) one also
has a natural embedding of $C^b(G)$ into ${S_0{ '}(G)}$, via
$ h \to \sigma_h$ with $\sigma_h(f) = \int_G f(x) h(x) dx$.
If $h \in C^b(G)$ defines a functional  $\sigma \in {S_0{ '}(G)}$
in this way, we say that the \textit{regular distribution}
$\sigma$ is \textit{represented by} $h$.

\section{Definitions}
\label{ch18:sec1}

\begin{definition}\label{ch18:def1}
Let $\mathcal{H}$ be an arbitrary Hilbert space.

A measurable mapping $X:G \mapsto \mathcal{H}$ is called a \textbf{stochastic process} on G.
\end{definition}

\begin{definition}\label{ch18:def2}
A bounded linear mapping $\rho :{S_{0}(G)} \mapsto \mathcal{H}$ is called a
\textbf{generalized stochastic process (GSP)}.
\end{definition}

We can think of $\mathcal{H}$ as a Hilbert space of
$\mathbb{C}$-valued random
variables with zero expectation on an arbitrary probability space
$(\varOmega ,\varSigma ,P)$, that is, $L^{2}_{0}(\varOmega ,\varSigma ,P)$, but we
will only use the Hilbert space properties and therefore we write $\mathcal{H}$.

$(.| .)$ will denote the (sesquilinear) inner
product and $\| .\| _{\mathcal{H}}$ the norm in $\mathcal{H}$.

The following properties of GSPs will be of importance in this paper.
These definitions are of course analogous to the corresponding concepts
for classical stochastic processes in the following sense:
If a classical stochastic process is interpreted as a
generalized stochastic process (via integration), the classical
terminology is compatible with the one given here.

\begin{definition}\label{ch18:def3}\
\begin{enumerate}
\item[(a)] A GSP is called (wide sense time) \textbf{stationary} if
\[
(\rho (f)| \rho (g)) = (\rho (T_{x}f)| \rho (T_{x}g)) \ \forall \, x
\in G, \ \forall \, f,g \in {S_{0}(G)}.
\]

\item[(b)] A GSP is called (wide sense) \textbf{frequency stationary} if
\[
(\rho (f)| \rho (g)) = (\rho (M_{\chi}f)| \rho (M_{\chi}g)) \ \forall \, \chi
\in \hat{G}, \ \forall \, f,\,g \in {S_{0}(G)}.
\]

\item[(c)] A time-stationary and frequency-stationary process is called
\textbf{white noise}.
\end{enumerate}
\end{definition}

\begin{definition}\label{ch18:def4}\
\begin{enumerate}\leftskip5pt
\item[(a)] A GSP $\rho $ is called \textbf{bounded} (resp.\
  $\|\cdot\|_\infty$-bounded)
  if $\rho $ is bounded with respect to $\| .\| _{\infty }$:
\[
\exists \:  c > 0 \text{ such that }
\| \rho (f)\| _{\mathcal{H}} \le c\| f\| _{\infty } \
\forall \, f \in {S_{0}(G)}.
\]
\item[(b)] A GSP $\rho $ is called $\textbf{V}$\textbf{-bounded}
 (variation bounded) if:
\[
\exists \:  c > 0\text{ such that }\| \rho (f)\| _{\mathcal{H}} \le
c\| \hat{f}\| _{\infty } \ \forall \,  f \in {S_{0}(G)} .
\]
\end{enumerate}
\end{definition}
This is---as many other terms---not just randomly chosen
terminology by in accordance with established literature
in the stochastic literature, as are the following terms:

\begin{definition}\label{ch18:def5}
A GSP is called \textbf{orthogonally scattered} if
\[
\operatorname{supp}(f) \cap  \operatorname{supp}(g) = \emptyset \quad
\text{implies} \quad \rho (f)\perp \rho (g) \quad
\text{for} \,\,f,g\in {S_{0}(G)}.
\]

Due to the tensor product property of $S_{0}:
{S_{0}(G)}\hat{\otimes }{S_{0}(G)} = S_{0}(G \times G)$ [cf.~\cite{fe81-2} Theorem~\ref{ch18:thm7}.(d)] it is justified to
hope that the following definition
determines an element of $S_{0}(G\times G)'$.
\end{definition}

\begin{definition}\label{ch18:def6}
Let $\rho $ be a GSP.  The \textbf{autocovariance} (or
autocorrelation) distribution ${\sigma_{ \rho}}$ is defined as
$\langle\sigma _{\rho},f\otimes g\rangle := (\rho (f)| \rho (\bar{g}))
\ \forall \, f,g \in {S_{0}(G)}$.
\end{definition}

A priori the $
{\sigma_{ \rho}} $ functional is only defined for functions
of the form $f\otimes g, \, f,g \in {S_{0}(G)}$,
but we want ${\sigma_{ \rho}}$ to be a
distribution of two variables that is a linear functional on
${S_{0}(G)}\hat{\otimes }{S_{0}(G)} = S_{0}(G\times G)$.
Therefore we must
extend the definition of ${\sigma_{ \rho}}$ to functions $h:=
\sum^{\infty }_{n=1}f_{n}\otimes g_{n}$ .
This can be done by first
defining ${\sigma_{ \rho}}$ in the obvious way for finite sums.
It is not  difficult to show that this definition makes sense and that
${\sigma_{ \rho}}$ is bounded.
Then we can use the fact that the completion of
the space of all finite sums is equal to
${S_{0}(G)}\hat{\otimes }{S_{0}(G)}$ to extend
$\sigma _{\rho}$ (uniquely) to ${S_{0}(G)}\hat{\otimes
}{S_{0}(G)} = S_{0}(G\times G)$; hence $\sigma _{\rho}$
is an element of $S'_{0}(G\times G)$.



\section{Relations Between a GSP and Its Covariance}
\label{ch18:sec2}

As in the classical case, the relations between the properties
of a GSP $\rho $ and the covariance ${\sigma_{ \rho}}$ associated with
$\rho $ are important.  The following theorems will deal with this
aspect (see also \cite{ni75-1} for some aspects concerning
bimeasures):

\begin{theorem}\label{ch18:the1}
For a GSP $\rho $ the
following properties are pairwise equivalent:
\begin{enumerate}\leftskip4pt
\item[(a)] $\rho$  is stationary if and only if ${\sigma_{ \rho}}$ is
diagonally invariant,
i.e.  $ T_{(x,x)} {\sigma_{ \rho}} =
{\sigma_{ \rho}} \ \forall \, x \in G; $

\item[(b)] $\rho $ is bounded if and only if ${\sigma_{ \rho}}$
extends in a unique way to a bimeasure on $G\times G$;

\item[(c)] $\rho $ is orthogonally scattered 
if and only if
${\sigma_{ \rho}}$ is supported by the diagonal, i.e.,
 $ \operatorname{supp}({\sigma_{ \rho}}) \subseteq \varDelta_{G}
 := \left\{ (x,x) \mid x\in {}G \right\},$
and this holds if and only if
there exists a positive and translation
bounded measure $\tau_{\rho}$ with
\[
\langle{\sigma_{ \rho}},f\otimes g\rangle = \langle\tau_{\rho},fg\rangle
\ \, \, \forall \, f,g \in {S_{0}(G)}.
\]
\end{enumerate}
\end{theorem}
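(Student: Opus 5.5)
The plan is to prove the three equivalences separately. In each case we work first on elementary tensors $f\otimes g$, $f,g\in S_0(G)$. Their finite linear combinations are dense in $S_0(G)\hat{\otimes}S_0(G)=S_0(G\times G)$, and $\sigma_\rho$ is continuous, so identities checked on $f\otimes g$ extend to all of $S_0(G\times G)$.

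\textbf{Part (a).} From the definitions,
$$\langle T_{(x,x)}\sigma_\rho, f\otimes g\rangle=\langle\sigma_\rho, T_{-x}f\otimes T_{-x}g\rangle=(\rho(T_{-x}f)\mid\rho(T_{-x}\bar g)),$$
because $\overline{T_{-x}g}=T_{-x}\bar g$. Stationarity, applied with $f$ and $\bar g$ and all $x$, therefore says exactly that $T_{(x,x)}\sigma_\rho$ and $\sigma_\rho$ agree on elementary tensors. By density they agree everywhere, and the converse is the same computation read backwards.

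\textbf{Part (b).} Suppose first $\|\rho(f)\|\le c\|f\|_\infty$. On a finite sum $h=\sum f_n\otimes g_n$ we have
$$|\langle\sigma_\rho,h\rangle|\le\sum\|\rho(f_n)\|\,\|\rho(\bar g_n)\|\le c^2\sum\|f_n\|_\infty\|g_n\|_\infty.$$
Taking the infimum over representations gives $|\langle\sigma_\rho,h\rangle|\le c^2\|h\|_{V_0}$. Since $S_0(G\times G)$ is dense in $V_0(G\times G)$, $\sigma_\rho$ extends uniquely to an element of $BM(G\times G)$.

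Conversely, if $\sigma_\rho$ is a bimeasure, then
$$\|\rho(f)\|^2=\langle\sigma_\rho,f\otimes\bar f\rangle\le\|\sigma_\rho\|_{BM}\,\|f\|_\infty^2,$$
so $\rho$ is bounded. The delicate point in this direction is that the representation of $h$ need not be unique. This is harmless, because $\sigma_\rho$ is already a well-defined functional on $S_0(G\times G)$ and the estimate is only needed on finite sums.

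\textbf{Part (c).} Here the diagonal support must be read via the pairing: $\operatorname{supp}\sigma_\rho\subseteq\varDelta_G$ means $\langle\sigma_\rho,h\rangle=0$ for every $h\in S_0(G\times G)$ with compact support disjoint from $\varDelta_G$.

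\emph{Diagonal support implies orthogonally scattered.} Let $f,\bar g$ have disjoint supports, first with compact supports. Then $f\otimes g$ has support off the diagonal, so $(\rho(f)\mid\rho(\bar g))=0$. For general $f,g\in S_0(G)$ with disjoint supports we approximate, using the $A_c(G)$-density from the notation section. I expect this to be a minor technical point, handled by multiplying with compactly supported elements of $A(G)$.

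\emph{Orthogonally scattered implies existence of $\tau_\rho$.} For $f,g\in A_c(G)$ put $\tau(fg):=(\rho(f)\mid\rho(\bar g))$. The key step is to show that this depends only on the product $fg$. I would use partitions of unity in $A_c(G)$ with small supports. Splitting $f=\sum f\psi_i$ and $g=\sum g\psi_j$, orthogonality kills all terms whose supports are disjoint. Refining the partition then shows that $(\rho(f)\mid\rho(\bar g))$ is a limit of expressions depending only on $fg$. This well-definedness argument is the main obstacle.

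Once $\tau$ is well defined, positivity comes from $\tau(|f|^2)=\|\rho(f)\|^2\ge0$, after writing a nonnegative $k\in A_c(G)$ as $|f|^2$ approximately (for example via $f=\sqrt{k+\epsilon\phi}$ with a local-unit argument). Translation boundedness follows from
$$\tau(T_xk)\le\|\rho\|^2\,\|T_x\phi\|_{S_0}^2=\|\rho\|^2\|\phi\|_{S_0}^2$$
for a fixed $\phi$ with $|\phi|^2\ge k$. Positivity then gives continuity on $C_c(G)$, so $\tau$ is a Radon measure, and the identity $\langle\sigma_\rho,f\otimes g\rangle=\langle\tau_\rho,fg\rangle$ extends from $A_c(G)$ to all of $S_0(G)$ by density, using that translation bounded measures act continuously on $W(C_0,L^1)\supseteq S_0(G)$.

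\emph{Existence of $\tau_\rho$ implies diagonal support.} This is immediate. If $h=\sum f_n\otimes g_n$ is supported off the diagonal, then $\langle\sigma_\rho,h\rangle=\langle\tau_\rho,Rh\rangle$, where $Rh$ is the restriction of $h$ to the diagonal, and $Rh=0$. This uses that restriction to $\varDelta_G$ maps $S_0(G\times G)$ continuously into $W(C_0,L^1)$, a consequence of the tensor product property.
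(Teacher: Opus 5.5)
\paragraph{The gap: well-definedness of $\tau$ in part (c).} Your arguments for (a) and (b) are fine and essentially the paper's. So are the two easy implications in (c): diagonal support $\Rightarrow$ orthogonally scattered, and existence of $\tau_\rho$ $\Rightarrow$ diagonal support. The gap is the step you yourself call the main obstacle: that $(\rho(f)\mid\rho(\bar g))$ depends only on $fg$.

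Orthogonal scattering plus partitions of unity only shows that $\sigma_\rho$ kills test functions whose support \emph{misses} $\varDelta_G$, i.e.\ $\operatorname{supp}\sigma_\rho\subseteq\varDelta_G$. Well-definedness of $\tau$ needs more: $\sigma_\rho$ must kill functions that merely \emph{vanish} on $\varDelta_G$, such as $f\otimes g-fg\otimes u$ with $u=1$ near the supports. That is spectral synthesis for $\varDelta_G$. It does not follow from support information alone; for general closed sets it is false (e.g.\ L.~Schwartz's sphere in $\mathbb{R}^3$). So some specific property of $\varDelta_G$ has to enter, and your sketch invokes none.

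Nor does ``refining the partition'' converge on its own. Take $\mathbb{R}^d$ with $\psi_i=T_{x_i}\psi(\cdot/\delta)$ and smooth $f,g$. Freezing $f,g$ at $x_i$ costs $\|(f-f(x_i))\psi_i\|_{S_0}\asymp\delta$ per piece, while $\|\psi_j\|_{S_0}$ stays bounded. There are $\asymp\delta^{-d}$ neighbouring pairs, so termwise bounds give only $O(\delta^{1-d})$, which does not tend to $0$.

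\paragraph{How the paper closes it, and how you could.} The paper cites two facts: the closed subgroup $\varDelta_G$ is a set of spectral synthesis (Reiter), and $\operatorname{Restr}_{\varDelta_G}$ maps $S_0(G\times G)$ onto $S_0(\varDelta_G)$. Hence $\langle\tau_\rho,(\operatorname{Restr}_{\varDelta_G}F)\circ j_G\rangle:=\langle\sigma_\rho,F\rangle$ is well defined and continuous.

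To stay on your route you would have to reprove this. One way:
\begin{enumerate}
\item First establish $\operatorname{supp}\sigma_\rho\subseteq\varDelta_G$.
\item Use cut-offs $k_V\in A(G)$ with $k_V=1$ near $0$ and $\operatorname{supp}k_V\subseteq V$. They should be uniformly bounded in $A(G)$, with $\|(\chi-1)k_V\|_{A(G)}$ small uniformly for $\chi$ in a given compact subset of $\hat G$.
\item Insert $k_V(x-y)$ and write $\psi(x)-\psi(y)=\int\hat\psi(\chi)\chi(y)(\chi(x-y)-1)\,d\chi$. This yields $\langle\sigma_\rho,(\psi\otimes1-1\otimes\psi)F\rangle=0$ for $\psi\in A(G)$.
\item Hence $(\rho(\psi f)\mid\rho(h))=(\rho(f)\mid\rho(\bar\psi h))$, and so $(\rho(f)\mid\rho(\bar g))=(\rho(fg)\mid\rho(u))$.
\end{enumerate}
Once $\tau$ is defined this way, your remaining steps are a legitimate alternative to the paper's. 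These are positivity via Wiener--L\'evy square roots, translation boundedness, and the density extension. The paper instead regularizes by $|f_\alpha|^2$ and appeals to the fact that positive elements of $S_0'(G)$ are translation bounded measures.
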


\begin{proof}\
\begin{enumerate}\leftskip4pt
\item[(a)] Follows from the definitions.

\item[(b)] $(\Rightarrow )$ By the continuity of ${\sigma_{ \rho}}$, it
follows from the definition  of  a bounded GSP that
the following holds:
\[
| \langle{\sigma_{ \rho}}, h \rangle| = 
| \langle{\sigma_{ \rho}},\sum^{\infty }_{n=1}f_{n}\otimes
g_{n}\rangle| \le \sum^{\infty }_{n=1}| (\rho (f_{n})| \rho
(\bar{g}_{n}))| \le c^{2} \sum^{\infty }_{n=1}\| f_{n}\| _{\infty }\|
g_{n}\| _{\infty }
\]
for all admissible representations
$\sum^{\infty}_{n=1}f_{n}\otimes g_{n}$
of $h\in {}S_{0}(G\times G)$;

\noindent hence
$| \langle{\sigma_{ \rho}},h\rangle|  \le
 c^{2}\|h\| _{V_{0}}  \ \forall \,  h \in S_{0}(G\times G)$.
The density of $S_{0}(G \times G)$ 
in $V_{0}(G \times G)$ now implies that
${\sigma_{ \rho}}$ extends to a uniquely determined
bimeasure on $G \times G$.

$(\Leftarrow )$ Boundedness of $\rho $ (with respect
   to the sup-norm) follows from the estimate
\[
\| \rho (f)\| ^{2}_{\mathcal{H}} = (\rho (f)| \rho (f)) = \langle\sigma
_{\rho},f\otimes \bar{f}\rangle \le c\| f\otimes \bar{f}\| _{V_{0}} \le
c\| f\| ^{2}_{\infty }
\]

\item[(c)] (First equivalence $\Leftarrow )$ Follows directly from the definitions.
\end{enumerate}
(First equivalence $\Rightarrow )$ 
First it is clear
that $\langle{\sigma_{ \rho}},f\otimes g\rangle = 0$ whenever
$\bigl(\operatorname{supp}(f)\times \operatorname{supp}(g) \bigr) \cap \varDelta _{G}  =
 \emptyset$.
Making use of the tensor product property for $S_0$, i.e., using $S_{0}(G\times G) = {S_{0}(G)}\hat{\otimes }{S_{0}(G)}$
and suitably refined partitions of unity (in both factors)
one derives therefrom that
$\langle{\sigma_{ \rho}},h\rangle = 0$ for any
$h \in S_{0}(G\times G)$ having compact support disjoint to
$\varDelta _{G}$.
This implies $\operatorname{supp}(\sigma _{\rho}) \subseteq \varDelta _{G}$.

(Second equivalence $\Rightarrow$) $\varDelta _{G}$ being a set of
\textit{spectral synthesis} (cf.  \cite{re68} Chap.7, Theorem~4.1, and Chap.6
Remark~1.5) a distribution ${\sigma_{ \rho}}$ with support on $\varDelta
_{G}$ satisfies ${\sigma_{ \rho}}(F) = {\sigma_{ \rho}}(H)$
if the two restrictions to $\varDelta _{G}$ are equal, i.e.,
if ${\operatorname{Restr}_{\varDelta_{G}}}(F) = {\operatorname{Restr}_{\varDelta_{G}}} (H)$ [recall that $ {\operatorname{Restr}_{\varDelta_{G}}}$, given by  ${\operatorname{Restr}_{\varDelta_{G}}}(F)(x) = F(x,x)$
maps $S_{0}(G \times G)$ onto
$S_{0}(\varDelta _{G})$ by \cite{fe81-2},
Theorem~\ref {ch18:thm7}.(c)].  The representation of
$\sigma _{\rho}$ by $\tau_{\rho}$ follows therefrom
using the canonical identification $j_{G}$ of $G$
and $\varDelta _{G}$ (given by $j_{G}(x) = (x,x)$) and the formula
\[
\langle{\sigma_{ \rho}},f\otimes g\rangle =
\langle\tau_{\rho},
[{\operatorname{Restr}_{\varDelta_{G}}}(f\otimes g)] \circ j_{G} \rangle =
\langle\tau_{\rho},f g \rangle.
\]
To show that $\tau_{\rho}$ is positive, we take a net
$f_{\alpha}  \in  {S_{0}(G)}$ with $|f_{\alpha}|^2  \rightarrow
\delta_0$ and define
\[
\langle ( \tau_{\rho})_{\alpha},g \rangle
= \langle \tau_{\rho} \ast
|f_{\alpha}|^2,g \rangle = \langle \tau_{\rho},
|(f_{\alpha}|^2) \checkm  \ast g \rangle.  
\]
It is clear that $\langle (\tau_{\rho})_{\alpha},g \rangle \rightarrow
\langle \tau_{\rho},g \rangle  \ \forall \,  g  \in  {S_{0}(G)}$. In
addition $(\tau_{\rho})_{\alpha}$ can be identified with the bounded
function $h_{\alpha}(x) =
\langle \tau_{\rho}, T_x (|f_{\alpha}|^2) \checkm \rangle
 = \langle \tau_{\rho}, |T_{-x} {f_{\alpha}\checkm}|^2 \rangle$.

As $\langle \tau_{\rho}, f \bar{f}\rangle
= (\rho(f)|\rho(f))  \geq  0  \ \forall \,  f  \in {S_{0}(G)}$
it is obvious that $h_{\alpha}(x)  \geq  0  \ \forall \,  x  \in
 G$. This implies
 $\langle (\tau_{\rho})_{\alpha},g \rangle  \geq    0
 \ \forall \,  g  \in  {S_{0}(G)}, \: g \geq 0$, and thus
$\tau_{\rho}$ is positive.  
But the positive elements
of ${S_0{ '}(G)}$ are translation bounded measures (cf. \cite{fe80}
Prop.\ B4, or \cite{ho89} Appendix: Theorem~2.3).
Since the opposite direction is obvious the proof is complete.\qed
\end{proof}

\setcounter{corollary}{1}
\begin{corollary}\label{ch18:cor2}
A GSP $\rho $ is bounded and
orthogonally scattered
if and only if there exists a
bounded measure $\mu_{\rho}$ on $G$ such that
\[
\langle{\sigma_{\rho}},f\otimes g\rangle = \langle\mu_{\rho},fg\rangle
= \int^{}_{G}f(x)g(x)d\mu_{\rho}(x) \quad
\forall \, f,g \in {S_{0}(G)}.
\]
\end{corollary}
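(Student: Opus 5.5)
We combine parts (b) and (c) of Theorem~\ref{ch18:the1}. Part (c) takes care of the orthogonally scattered property. The remaining work is to show that, for the diagonal representing measure, the bimeasure condition of part (b) amounts to the measure being bounded.

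($\Rightarrow$) Suppose $\rho$ is bounded and orthogonally scattered. By (c) there is a positive, translation bounded measure $\tau_\rho$ with $\langle\sigma_\rho,f\otimes g\rangle=\langle\tau_\rho,fg\rangle$ for all $f,g\in S_0(G)$. Boundedness of $\rho$ gives, for every $f\in S_0(G)$,
\[
\langle\tau_\rho,|f|^2\rangle=\|\rho(f)\|_{\mathcal H}^2\le c^2\|f\|_\infty^2 .
\]
Now take $f\in A_c(G)\subseteq S_0(G)$ with $0\le f\le 1$. Then $|f|^2\le 1$ and $\langle\tau_\rho,f^2\rangle\le c^2$. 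Choose a bounded net of such $f$ with $f^2\to 1$ uniformly on compacts; for example, take $A_c(G)$ functions equal to $1$ on increasing compact sets. Since $\tau_\rho$ is positive, the inner regularity of Radon measures gives $\tau_\rho(K)\le c^2$ for every compact $K$, hence total mass $\tau_\rho(G)\le c^2$. So $\mu_\rho:=\tau_\rho$ is a bounded (positive) measure. The integral form $\int f g\,d\mu_\rho$ is then just the standard identification of the functional with integration against the measure.

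The main obstacle is producing enough nonnegative functions in $S_0(G)$ that approximate $1$. I would use that $A_c(G)$ is closed under products and contains, for each compact $K$, a function $k$ with $0\le k\le1$ and $k\equiv1$ on $K$. Such a $k$ is obtained as a convolution $\mathbf 1_U\ast\mathbf 1_V/|V|$ (nonnegative, bounded by $1$, and in $A_c$). Alternatively, one can avoid positivity arguments altogether: write $\langle\tau_\rho,h\rangle$ for $h=f\bar g$ and use polarization to get $|\langle\tau_\rho,h\rangle|\le C\|h\|_\infty$ on the sup-norm dense subspace $\{fg\}$ of $C_0(G)$. Products $fg$ with $f,g\in S_0$ cover $A_c(G)$, e.g.\ $h=h\cdot k$ with $k\equiv1$ on $\operatorname{supp}h$. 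This gives $\tau_\rho\in M(G)$, and the variant with $f=\sqrt{\cdot}$ is not needed if one takes $f=k$, $g=h$ and $\|k\|_\infty=1$.

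($\Leftarrow$) Suppose the measure $\mu_\rho$ exists. If $\operatorname{supp}f\cap\operatorname{supp}g=\emptyset$, then $fg=0$, so $(\rho(f)|\rho(\bar g))=0$. Since $\operatorname{supp}\bar g=\operatorname{supp}g$, $\rho$ is orthogonally scattered. Moreover
\[
\|\rho(f)\|^2=\int|f|^2\,d\mu_\rho\le\|\mu_\rho\|_M\|f\|_\infty^2,
\]
so $\rho$ is bounded.
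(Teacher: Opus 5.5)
Your proof is correct, but the key step of ($\Rightarrow$) differs from the paper's. The paper invokes Theorem~\ref{ch18:the1}.(b) to view $\sigma_\rho$ as a bimeasure. It then factors an arbitrary $f\in C_0(G)$ as $f=f_1f_2$ with $f_2=\sqrt{|f|}$ and $f_1=f/\sqrt{|f|}$, so that $|\langle\tau_\rho,f\rangle|=|\langle\sigma_\rho,f_1\otimes f_2\rangle|\le\|\sigma_\rho\|_{BM}\|f\|_\infty$. This needs no positivity. However, $f_1,f_2$ are in general not in $S_0(G)$, so the identity $\langle\tau_\rho,f_1f_2\rangle=\langle\sigma_\rho,f_1\otimes f_2\rangle$ relies on an approximation step the paper leaves implicit.

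Your main route instead uses the positivity of $\tau_\rho$ from Theorem~\ref{ch18:the1}.(c) and tests only with real bumps $k\in A_c(G)$ satisfying $0\le k\le 1$ and $k\equiv1$ on $K$: $\tau_\rho(K)\le\int k^2\,d\tau_\rho=\|\rho(k)\|_{\mathcal H}^2\le c^2$. This never leaves $S_0(G)$, avoids the bimeasure extension, and gives the explicit bound $\|\mu_\rho\|_M\le c^2$.

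Your alternative, $h=k\cdot h$ with $\|k\|_\infty=1$, is exactly the paper's factorization idea carried out inside $A_c(G)$, so it needs neither positivity nor bimeasures. The estimate you want there is Cauchy--Schwarz, $|(\rho(k)|\rho(\bar h))|\le c^2\|h\|_\infty$, not polarization. To identify $\tau_\rho$ on all of $S_0(G)$ with the resulting bounded measure, add that $A_c(G)$ is dense in $S_0(G)$ and that $\|\cdot\|_\infty\le C\|\cdot\|_{S_0}$.

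For ($\Leftarrow$), your direct check is simpler than the paper's appeal to Theorem~\ref{ch18:the1}.(b),(c), and it works for complex $\mu_\rho$ as well.

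Two cosmetic points. First, the bound on $\tau_\rho(K)$ comes from monotonicity ($\mathbf{1}_K\le k^2$); inner regularity is only needed to pass to $\tau_\rho(G)=\sup_K\tau_\rho(K)$. Second, in $\mathbf{1}_U\ast\mathbf{1}_V/|V|$ you must take $U\supseteq K-V$, with $V$ a relatively compact open neighbourhood of $0$, to get the value $1$ on $K$.
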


\begin{proof}
$(\Leftarrow )$ Follows from Theorem~\ref{ch18:the1}.(c) and~(b).

$(\Rightarrow )$  Theorem~\ref{ch18:the1}.(c)  implies the first part of the formula for $\tau_{\rho}\in {}{S_{0}(G)}. \: \rho $
being bounded, and $\tau_{\rho}$ is a bimeasure (which is supported by the diagonal).  To prove that $\tau_{\rho}$ is a bounded measure, we have to
show that $\tau_{\rho}$ is bounded with respect to $\| .\| _{\infty }$.

It is possible to write $f\in {}C_{0}(G)$ as $f =
f_{1}f_{2}$ with $f_{1},f_{2}\in {} C_{0}(G)$ and \\
$\| f\| _{\infty } = \| f_{1}\| _{\infty
}\| f_{2}\| _{\infty }$  (e.g.\ $f_{1}(x):= \arg
(f(x)) \sqrt{| f(x) |}, \,
f_{2}(x):=  \sqrt{| f(x) |}$).

\newpage  
Now the following holds:
\begin{align*}
 | \langle\tau_{\rho},f\rangle| &=
 | \langle\tau_{\rho},{\operatorname{Restr}_{\varDelta_{G}}} (f_{1}\otimes f_{2})\rangle| =
 | \langle\sigma _{\rho},f_{1}\otimes f_{2}\rangle| \\&\le \|
\sigma _{\rho}\| _{BM}\| f_{1}\| _{\infty }\| f_{2}\| _{\infty }
= c\|f\| _{\infty }. \quad \quad  \qed 
\end{align*}
\end{proof}


\section{The Spectral Process}
\label{ch18:sec3}

The following definition is analogous to  the  definition  of  the
Fourier transform for distributions.

\begin{definition}\label{ch18:def7}
Given a GSP $\rho $ on $G$, we define a
GSP $\hat{\rho}$ on $\hat{G}$:
\[
\hat{\rho}(f) := \rho (\hat{f})  \
\forall \,  f \in S_{0}(\hat{G}).
\]
$\hat{\rho}$ is called the \textbf{spectral process} to $\rho $.
\end{definition}


By viewing $G$ as the dual group of $\hat{G}$ (via
Pontrjagin's theorem) we can also apply the inverse
Fourier transform (viewed now as the inverse to
the mapping from $S_{0}(\hat{G})$ to $S_{0}(G)$).

\begin{definition}
\label{ch18:def8}
Given a GSP $\rho $ on $G$, we define a
GSP $\check{\rho}$ on $\hat{G}$:
\[
 \check{\rho}(f) := \rho (\check{f}) \quad  \
\forall \,  f \in S_{0}(\hat{G}),
\]
where the inverse Fourier transform for test functions
is defined via
\[
 \check{f}(x) := h(x), \quad \mbox{for} \quad f = \hat{h} , \,
 h \in S_{0}(\hat{G}).
\]
\end{definition} 

\par
\medskip
 Since the mappings $f \mapsto \check{f}$ (inverse Fourier transform)
and $f \mapsto \hat{f}$ (forward Fourier transform)
define isomorphisms between ${S_{0}(G)}$ and $S_{0}(\hat{G})$, respectively, it is clear that $\check{\rho}$ and $\hat{\rho}$ are GSPs.


The reader should observe that the good properties of the functor
 $S_0$ and consequently of the functor  ${S'_0}$
gives the Fourier transform and the inversion (often called
``spectral representation'' of the process) a more symmetric
description. This is in sharp contrast to the usual setting,
where often stochastic processes are
viewed  as Hilbert space valued measures, or vector measures,
i.e.,  as linear operators
from $\mathcal{C}_c(G)$ into  $\mathcal{H}$, and the Fourier transform
resp.\ the inverse Fourier transform are defined in a rather
different setting, requiring additional technical tools and
arguments. On the other hand one may add that even for the
deterministic case ($\mathcal{H} = \mathbb{C}$) the use of the
Schwartz-Bruhat space over $G$ is much more cumbersome
than the usual Schwartz space over $R^d$. In fact, we hope
that the reader will appreciate the technical simplicity
of the description offered here, while still providing
arguments at the natural level of generality.

The following lemma contains some simple facts about these operators:
\newpage  

\setcounter{lemma}{2}
\begin{lemma}\label{ch18:lem3}
For a GSP $\rho $ the following properties are pairwise equivalent:
\begin{enumerate}\leftskip5pt
\item[(a)]
$\rho $ resp.  $ \hat{\rho} $ is bounded $\Longleftrightarrow \hat{\rho} $ resp.  $\rho $ is V-bounded;

\item[(b)] $\rho $ resp. $ \hat{\rho} $
is stationary $\Longleftrightarrow \hat{\rho} $ resp.  $\rho $ is
frequency stationary;

\item[(c)] $\rho = \hat{\tau} \Longleftrightarrow
\check{\tau} = \hat{\rho}$ for some GSP $\tau$ on $\hat G$.
\end{enumerate}
\end{lemma}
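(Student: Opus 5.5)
All three parts reduce to unwinding Definitions~\ref{ch18:def4}, \ref{ch18:def3}, \ref{ch18:def7} and~\ref{ch18:def8}. The only tools needed are that the Fourier transform maps ${S_{0}(G)}$ isomorphically onto $S_{0}(\hat{G})$ with inverse $f\mapsto\check f$ (Pontryagin duality), and the intertwining relations between translation, modulation and the Fourier transform. Each ``resp.'' half of (a) and (b) is the dual statement, obtained by exchanging the roles of $G$ and $\hat G$ and of $\rho$ and $\hat\rho$. In that step we use that $\hat{\hat f}=f\checkm$, so $\hat{\hat\rho}(f)=\rho(f\checkm)$; the reflection $f\mapsto f\checkm$ preserves $\|\cdot\|_\infty$ and turns translation/modulation invariance into invariance under $T_{-x}$, $M_{\bar\chi}$, which is the same condition.

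For (a), suppose $\rho$ is bounded, $\|\rho(f)\|_{\mathcal H}\le c\|f\|_\infty$. Then for $f\in S_0(\hat G)$ we get $\|\hat\rho(f)\|=\|\rho(\hat f)\|\le c\|\hat f\|_\infty$, which is exactly V-boundedness of $\hat\rho$. Conversely, V-boundedness of $\hat\rho$ gives $\|\rho(\hat f)\|\le c\|\hat f\|_\infty$ for all $f\in S_0(\hat G)$. Since every $g\in{S_{0}(G)}$ is of the form $\hat f$ (surjectivity of $\widehat{\ }$ on $S_0$), this is boundedness of $\rho$. The statement ``$\hat\rho$ bounded $\Leftrightarrow$ $\rho$ V-bounded'' is handled symmetrically. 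It amounts to $\|\rho(\hat f)\|\le c\|f\|_\infty$ for all $f\in S_0(\hat G)$, and via $f=(\hat g)\check{}$ and $\|f\|_\infty=\|\hat{\hat g}\|_\infty=\|\hat g\|_\infty$ (reflection) this becomes $\|\rho(g)\|\le c\|\hat g\|_\infty$.

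For (b), the key identities are $\widehat{T_x f}=M_{\bar\chi_x}\hat f$ and $\widehat{M_\chi f}=T_\chi\hat f$ (with sign conventions adjusted to the paper's pairing). Stationarity of $\hat\rho$ reads $(\rho(\hat f)|\rho(\hat g))=(\rho(\widehat{T_\xi f})|\rho(\widehat{T_\xi g}))=(\rho(M_{\chi}\hat f)|\rho(M_{\chi}\hat g))$ for the appropriate character $\chi$ on $G$ determined by $\xi\in\hat G$. As $\xi$ ranges over $\hat G$ the corresponding characters exhaust all of $\hat G$, and $\hat f,\hat g$ exhaust ${S_{0}(G)}$, so this is precisely frequency stationarity of $\rho$. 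Time stationarity of $\rho$ versus frequency stationarity of $\hat\rho$ works the same way, using the other intertwining relation and the identification of $G$ with the dual of $\hat G$.

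For (c), $\rho=\hat\tau$ means $\rho(g)=\tau(\hat g)$ for all $g$. Then $\hat\rho(f)=\rho(\hat f)=\tau(\hat{\hat f})$, while $\check\tau(f)=\tau(\check f)$. The point to verify is that the paper's convention identifies these: on $S_0$ one has $\hat{\hat f}=\check f$ up to the canonical Pontryagin identification, since both equal $f\checkm$ regarded on the appropriate group. The converse follows by applying $\check{}$ inverse maps, because $\widehat{\ }$ and $\check{}$ are mutually inverse bijections between the $S_0$ spaces. This bookkeeping of conventions is where I expect the main (only) obstacle: making sure the domains in Definition~\ref{ch18:def8} and the reflection signs match. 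If necessary, I would state (c) with the reflection made explicit and check it on $f=\hat h$ directly from the definition of $\check f$.
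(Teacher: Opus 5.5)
Your treatment of (a) and (b) is correct and is exactly the unwinding of definitions the paper has in mind. There is one slip in (a): with $f=(\hat g)\checkm$ one has $\|f\|_\infty=\|\hat g\|_\infty$ directly, whereas $\|\hat{\hat g}\|_\infty=\|g\|_\infty$.

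Part (c), however, rests on a false identity. You assert $\hat{\hat f}=\check f$ ``since both equal $f\checkm$''. Fourier inversion on $S_0$ does give $\hat{\hat f}=f\checkm$ (with Plancherel-normalized Haar measure on $\hat G$). But the $\check f$ of Definition~\ref{ch18:def8} is the inverse Fourier transform, not the reflection. On $\mathbb{R}$ one has $\mathcal{F}^{-1}(T_a g_0)=M_a g_0$, while $(T_a g_0)\checkm=T_{-a}g_0$. In effect you use $\check{}$ for the reflection in (a) and for the inverse Fourier transform in (c), and then identify the two.

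Under the Definition~\ref{ch18:def8} reading, $\check\tau$ (for $\tau$ on $\hat G$) is a GSP on $G$, while $\hat\rho$ is a GSP on $\hat G$. So you are evaluating functionals with different domains at the same $f$. Even on $\mathbb{R}^d$ with $\hat{\mathbb{R}}^d$ identified with $\mathbb{R}^d$, $\rho=\hat\tau$ gives $\hat\rho(f)=\tau(f\checkm)$ but $\check\tau(f)=\tau(\mathcal{F}^{-1}f)$, and these differ in general. So the obstacle you flag is not a matter of sign bookkeeping. The check on $f=\hat h$ that you propose would expose the problem rather than resolve it.

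Statement (c) is correct only when $\check\tau$ denotes the reflected process $\tau\checkm(f):=\tau(f\checkm)$, $f\in S_0(\hat G)$. This is the reading the paper itself uses right after the lemma, where it concludes that $\rho\mapsto(\rho\checkm)\hat{\;}=\hat\rho\checkm$ inverts $\rho\mapsto\hat\rho$. With that reading, (c) is just the identity $\hat{\hat\tau}=\tau\checkm$ from your first paragraph:
\begin{itemize}
\item If $\rho=\hat\tau$, then $\hat\rho=\hat{\hat\tau}=\tau\checkm$.
\item Conversely, suppose $\hat\rho=\tau\checkm$ and $g\in S_0(G)$. Put $f:=(\hat g)\checkm\in S_0(\hat G)$. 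Then $\hat f=(\hat{\hat g})\checkm=g$ and $f\checkm=\hat g$, so $\rho(g)=\hat\rho(f)=\tau(f\checkm)=\tau(\hat g)=\hat\tau(g)$.
\end{itemize}
If you insist on the inverse transform of Definition~\ref{ch18:def8}, read as $\check\rho=\rho\circ\mathcal{F}_G^{-1}$ with $\mathcal{F}_G^{-1}:S_0(\hat G)\to S_0(G)$, then the correct statement is $\rho=\hat\tau\Leftrightarrow\tau=\check\rho$. That is immediate because $\hat\tau=\tau\circ\mathcal{F}_G$.
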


\begin{proof}
This follows directly from the definitions.\qed
\end{proof}

From Lemma~\ref{ch18:lem3}.(c) it is clear that
$\rho \mapsto ({\rho}\checkm) \hat{\;} =
{\hat{\rho}}\checkm $ is the
inverse mapping of $\rho \mapsto \hat{\rho}$.
This shows that the Fourier transform is a bijective
mapping between the GSPs over $G$ and
those over $\hat{G}$.

\setcounter{theorem}{3}
\begin{theorem}\label{ch18:the4}
Let $\rho $ be a GSP, then we have:
\begin{enumerate}
\item[(a)] $\langle\hat{\sigma}_{\rho},f\otimes g\rangle =
\langle\sigma_{\hat{\rho}},f\otimes {g}\checkm\rangle.$

\item[(b)] $\rho \text{ is orthogonally scattered if and only if }
 T_{(t,t)}\sigma_{\hat{\rho}} = \sigma_{\hat{\rho}} \
 \forall \, \, t \in \hat{G}.$
\end{enumerate}
\end{theorem}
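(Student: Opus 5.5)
The plan is to prove (a) by a direct computation on elementary tensors. For (b), the idea is to transport diagonal translation invariance on the $\hat G$ side into invariance under modulation on the $G$ side, that is, into frequency stationarity of $\rho$. Then Theorem~\ref{ch18:the1}.(c) identifies frequency stationarity with orthogonal scattering.

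For (a), I use that the Fourier transform on $S_0(G\times G)=S_0(G)\hat\otimes S_0(G)$ acts on elementary tensors by $(f\otimes g)\hat{\;}=\hat f\otimes\hat g$. Hence
\[
\langle\hat\sigma_\rho,f\otimes g\rangle=\langle\sigma_\rho,\hat f\otimes\hat g\rangle=(\rho(\hat f)\,|\,\rho(\overline{\hat g})).
\]
A one-line computation with the integral defining $\hat g$ gives $\overline{\hat g}=\widehat{\overline{g\checkm}}$. Therefore the right-hand side equals $(\hat\rho(f)\,|\,\hat\rho(\overline{g\checkm}))=\langle\sigma_{\hat\rho},f\otimes g\checkm\rangle$, which is (a).

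For (b), I unwind $T_{(t,t)}\sigma_{\hat\rho}=\sigma_{\hat\rho}$ on elementary tensors $f\otimes g$, which suffices by density of finite sums in $S_0(\hat G\times\hat G)$. The condition reads $(\rho(\widehat{T_{-t}f})\,|\,\rho(\widehat{T_{-t}\bar g}))=(\rho(\hat f)\,|\,\rho(\hat{\bar g}))$. Since $\widehat{T_{s}h}=M_{\chi}\hat h$ for the character $\chi$ on $\hat G$ corresponding to $s$, and $\hat f,\hat{\bar g}$ run through all of $S_0(G)$, this is exactly frequency stationarity of $\rho$. It matches the statement $\hat\rho$ stationary $\Leftrightarrow\rho$ frequency stationary of Lemma~\ref{ch18:lem3}.(b). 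It therefore remains to show that $\rho$ is orthogonally scattered if and only if it is frequency stationary.

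The direction ($\Rightarrow$) is easy. By Theorem~\ref{ch18:the1}.(c),
\[
(\rho(M_\chi f)\,|\,\rho(M_\chi g))=\langle\tau_\rho,M_\chi f\,\overline{M_\chi g}\rangle=\langle\tau_\rho,f\bar g\rangle,
\]
because $|\chi|=1$.

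The direction ($\Leftarrow$) is the main obstacle. Take $f,g\in S_0(G)$ with disjoint supports; it suffices to treat compact supports and then approximate, using density of $A_c(G)$. Frequency stationarity gives, for every $\chi$,
\[
\langle\sigma_\rho,(M_\chi f)\otimes(M_{\bar\chi}\bar g)\rangle=\langle\sigma_\rho,f\otimes\bar g\rangle.
\]
Choose $k\in L^1(\hat G)$ with $\int k=1$ and $\check k=:h\in A_c(G)$ supported in a neighbourhood $U$ of $0$ with $U\cap(\operatorname{supp}f-\operatorname{supp}g)=\emptyset$. This is possible since $\operatorname{supp}f-\operatorname{supp}g$ is compact and does not contain $0$, and $h(0)=\int k=1$. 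Now integrate the identity against $k(\chi)\,d\chi$. The map $\chi\mapsto M_\chi f\otimes M_{\bar\chi}\bar g$ is continuous and bounded into $S_0(G\times G)$, because modulations act isometrically and continuously there. So the vector-valued integral exists, and $\sigma_\rho$ may be pulled through it, for instance as a limit of Riemann sums over a compact set carrying most of the mass of $k$. The integrated test function is $(x,y)\mapsto f(x)\bar g(y)h(x-y)$, which vanishes identically by the choice of $U$. Hence $\langle\sigma_\rho,f\otimes\bar g\rangle=0$, i.e.\ $\rho(f)\perp\rho(g)$, and $\rho$ is orthogonally scattered.
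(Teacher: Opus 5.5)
Your proof is correct. Part (a) is the paper's computation; the paper writes $\overline{\hat g}=\widehat{g^*}$ with $g^*=\bar g\checkm$. For (b), however, you take a genuinely different route.

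The paper stays on the Fourier side, combining three facts:
\begin{enumerate}
\item[(i)] by Theorem~\ref{ch18:the1}.(c), $\rho$ is orthogonally scattered iff $\operatorname{supp}(\sigma_\rho)\subseteq\varDelta_G$;
\item[(ii)] by (a), $\sigma_{\hat\rho}$ is $\hat\sigma_\rho$ reflected in the second variable;
\item[(iii)] by the result cited from \cite{fe80}, a distribution is invariant under a closed subgroup $H$ iff its Fourier transform is supported in $H^\perp$; with $H=\varDelta_{\hat G}$ this turns diagonal invariance of $\sigma_{\hat\rho}$ into diagonal support of $\sigma_\rho$.
\end{enumerate}

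You use neither (a) nor that duality theorem. Instead you unwind diagonal invariance of $\sigma_{\hat\rho}$ into frequency stationarity of $\rho$, which is the content of Theorem~\ref{ch18:the1}.(a) together with Lemma~\ref{ch18:lem3}.(b). You then prove directly that frequency stationarity is equivalent to orthogonal scattering. That equivalence is Corollary~\ref{ch18:cor6}.(a), which the paper deduces \emph{from} Theorem~\ref{ch18:the4}.(b). Since you prove it from scratch, there is no circularity, only a reversal of the logical order.

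Your two directions split the work as follows. The ($\Rightarrow$) step reuses the spectral-synthesis content already packaged in the measure $\tau_\rho$ of Theorem~\ref{ch18:the1}.(c). The ($\Leftarrow$) step averages the modulation invariance against $k\in L^1(\hat G)$ to produce the multiplier $(x,y)\mapsto h(x-y)$. In effect it is a hands-on proof, in the modulation picture, of the easy half of the cited duality theorem (invariance $\Rightarrow$ support in the annihilator).

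The trade-off is this. The paper's version is two lines and explains why (a) is paired with (b), but it outsources the substance to \cite{fe80}. Yours is more self-contained and yields Corollary~\ref{ch18:cor6}.(a) directly, at the cost of the vector-valued integral and the reduction to compact supports.

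On that reduction, density of $A_c(G)$ alone does not give approximants with \emph{disjoint} supports. Take instead $u_\beta f$ and $u_\beta g$, where $(u_\beta)\subset A_c(G)$ is a bounded approximate unit of $A(G)$. These have disjoint compact supports. They converge to $f$ and $g$ in $S_0(G)$ by the module estimate $\|uf\|_{S_0}\le\|u\|_{A}\|f\|_{S_0}$ together with the density of $A_c(G)$.
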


\begin{proof}\
\begin{enumerate}\leftskip4pt
\item[(a)] Making use of the isometric $L^1$-involution
 $ g \mapsto g^*: g^*(t) = \bar{g}\checkm =(g \checkm)^-$,
with the useful property that
 $ \widehat{g^*} = \bar{\widehat{g}},$ we have:\\
$\langle\hat{\sigma }_{\rho},f\otimes g\rangle
= \langle\sigma _{\rho},\hat{f}\otimes \hat{g}\rangle
= (\rho (\hat{f})| \rho (\bar{\widehat{g}}))
= (\hat{\rho}(f)| \hat{\rho}({g^*}))
= \langle\sigma_{\hat{\rho}},
f\otimes\bar{{g^*}} \rangle
= \langle\sigma_{\hat{\rho}},f\otimes g\checkm \rangle$. \par

\item[(b)] Follows from Theorem~\ref{ch18:the1}.(c) and the fact that  $\sigma$ is
$H$-invariant if and only if
$\operatorname{supp}(\hat{\sigma }) \subseteq  H^{\perp}$
(cf. \cite{fe80} 
Theorem ~B2.iii) and part a) above.
\end{enumerate}
\end{proof}

\setcounter{corollary}{4}
\begin{corollary}
\label{ch18:cor5}
A GSP  $\rho $  is V-bounded if and only if $  \hat{\sigma }_{\rho}$
extends to a bimeasure.
\end{corollary}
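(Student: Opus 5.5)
The plan is to reduce the statement to Theorem~\ref{ch18:the1}.(b), applied to the spectral process $\hat{\rho}$, by way of Lemma~\ref{ch18:lem3}.(a) and Theorem~\ref{ch18:the4}.(a). First, Lemma~\ref{ch18:lem3}.(a) says that $\rho$ is V-bounded if and only if $\hat{\rho}$ is bounded with respect to $\|\cdot\|_\infty$. Indeed, $\|\hat{\rho}(f)\|_{\mathcal{H}}=\|\rho(\hat f)\|_{\mathcal{H}}$, and $\hat f$ runs through all of $S_0(G)$ as $f$ runs through $S_0(\hat G)$. Second, Theorem~\ref{ch18:the1}.(b), applied to the GSP $\hat{\rho}$ on $\hat G$, shows that $\hat{\rho}$ is bounded if and only if $\sigma_{\hat{\rho}}$ extends uniquely to a bimeasure on $\hat G\times\hat G$. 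It therefore remains to show that $\sigma_{\hat{\rho}}$ extends to a bimeasure if and only if $\hat{\sigma}_\rho$ does.

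For this step I use Theorem~\ref{ch18:the4}.(a):
\[
\langle\hat{\sigma}_\rho,f\otimes g\rangle=\langle\sigma_{\hat{\rho}},f\otimes g\checkm\rangle .
\]
Let $R=\mathrm{Id}\otimes\,\checkm$ denote the reflection in the second variable, i.e.\ $RF(s,t)=F(s,-t)$. Then $\hat{\sigma}_\rho$ and $\sigma_{\hat{\rho}}$ agree with each other after composing with $R$ on elementary tensors. Both are bounded on $S_0(\hat G\times\hat G)=S_0(\hat G)\hat{\otimes}S_0(\hat G)$, and $R$ acts isometrically on this projective tensor product because reflection is isometric on $S_0(\hat G)$. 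Linearity and density of finite sums then give $\hat{\sigma}_\rho=\sigma_{\hat{\rho}}\circ R$ on all of $S_0(\hat G\times\hat G)$.

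The same reflection $R$ is an isometric bijection of $V_0(\hat G\times\hat G)=C_0(\hat G)\hat{\otimes}C_0(\hat G)$, since $\|g\checkm\|_\infty=\|g\|_\infty$. Consequently,
\[
|\langle\hat{\sigma}_\rho,h\rangle|\le c\|h\|_{V_0}\ \ \forall\,h\in S_0(\hat G\times\hat G)
\quad\Longleftrightarrow\quad
|\langle\sigma_{\hat{\rho}},h\rangle|\le c\|h\|_{V_0}\ \ \forall\,h\in S_0(\hat G\times\hat G).
\]
By density of $S_0$ in $V_0$, each of these bounds is equivalent to the existence of a unique bimeasure extension. Chaining the three equivalences proves the corollary.

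The only point needing care is this transfer of the $V_0$-bound through the reflection. It is exactly where the projective tensor norms enter. It becomes routine once one notes that $R$ maps admissible representations $\sum f_n\otimes g_n$ to admissible representations $\sum f_n\otimes g_n\checkm$ with identical norm sums, in both the $S_0$ and the $V_0$ tensor products.
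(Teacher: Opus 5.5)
Your proposal is correct and follows the paper's own route. The paper's proof consists only of citing Lemma~\ref{ch18:lem3}.(a), Theorem~\ref{ch18:the1}.(b) applied to $\hat{\rho}$, and Theorem~\ref{ch18:the4}.(a); you additionally supply the omitted detail that the reflection in the second variable preserves admissible representations in both the $S_0$ and $V_0$ projective tensor products, so the $V_0$-bound transfers between $\sigma_{\hat{\rho}}$ and $\hat{\sigma}_\rho$.
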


\begin{proof}
Apply Lemma~\ref{ch18:lem3}.(a), Theorem~\ref{ch18:the1}.(b), and Theorem~\ref{ch18:the4}.(a). \qed
\end{proof}

\section{Characterization of Stationary Processes}
\label{ch18:sec4}

\begin{corollary}\label{ch18:cor6}
For a GSP $\rho $ the following properties hold: 
\begin{enumerate}\leftskip5pt
\item[(a)] $\rho $ is  frequency stationary if and only if $\rho $ is
orthogonally scattered;

\item[(b)] $\rho $ is stationary if and only if there exists a positive translation bounded measure $\tau_{\hat{\rho}}$ on 
 $G$ with $\langle\sigma_{\hat{\rho}},f\otimes g\rangle =
\langle\tau_{\hat{\rho}},fg\rangle \ \forall \, f,g \in {S_{0}(G)};
\tau_{\hat{\rho}}$ is called the \textbf{spectral measure} of $\rho $.
\end{enumerate}
\end{corollary}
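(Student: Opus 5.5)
Both parts reduce, through the Fourier transform of GSPs, to Theorem~\ref{ch18:the1}.(a) and (c), combined with Lemma~\ref{ch18:lem3}.(b) and Theorem~\ref{ch18:the4}.(b).

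For (a), I would first check directly from the definitions that $\rho$ is frequency stationary if and only if $\hat\rho$ is stationary. Take $f,g\in S_0(\hat G)$ and $x\in G$. Translation of a test function on $\hat G$ corresponds, after Fourier transform, to multiplication by the character $\chi_x$ or $\overline{\chi_x}$ (the sign is fixed by the chosen convention). Hence
\[
(\hat\rho(T_xf)\mid\hat\rho(T_xg))=(\rho(M_{\chi}\hat f)\mid\rho(M_{\chi}\hat g))
\]
for a suitable character $\chi$. Since $\hat{\ }$ maps $S_0(\hat G)$ onto $S_0(G)$, and $\chi$ runs through all of $\hat G$ as $x$ runs through $G$, the equivalence follows; this is the content of Lemma~\ref{ch18:lem3}.(b). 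By Theorem~\ref{ch18:the1}.(a), applied on $\hat G$, stationarity of $\hat\rho$ is equivalent to diagonal invariance $T_{(t,t)}\sigma_{\hat\rho}=\sigma_{\hat\rho}$ for all $t\in\hat G$. By Theorem~\ref{ch18:the4}.(b), this is exactly equivalent to $\rho$ being orthogonally scattered. Chaining the three equivalences gives (a).

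For (b), I would apply Lemma~\ref{ch18:lem3}.(b) in the other direction: $\rho$ is stationary if and only if $\hat\rho$ is frequency stationary. By part (a) applied to the GSP $\hat\rho$ on $\hat G$, this holds if and only if $\hat\rho$ is orthogonally scattered. By the second equivalence in Theorem~\ref{ch18:the1}.(c), applied to $\hat\rho$, this holds if and only if there is a positive translation bounded measure $\tau_{\hat\rho}$ with $\langle\sigma_{\hat\rho},f\otimes g\rangle=\langle\tau_{\hat\rho},fg\rangle$. The measure $\tau_{\hat\rho}$ lives on $\hat G$, and the test functions $f,g$ range over $S_0(\hat G)$; I read the statement's ``on $G$'' and ``$S_0(G)$'' in that sense.

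The main obstacle is applying part (a) to $\hat\rho$. This requires that the Fourier transform of $\hat\rho$, a GSP on $\hat{\hat G}\cong G$, corresponds to the GSP $\rho$ up to a reflection. That follows from Pontryagin duality and the inversion relation $\rho\mapsto(\rho\checkm)\hat{\;}$ noted after Lemma~\ref{ch18:lem3}. The reflection $f\mapsto f\checkm$ preserves stationarity and frequency stationarity, so the conclusions are unaffected. The remaining work is bookkeeping the sign conventions of characters versus translations, which affects none of the conclusions.
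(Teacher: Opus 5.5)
Your argument is correct and is essentially the paper's own proof. For (a) you chain Lemma~\ref{ch18:lem3}.(b), Theorem~\ref{ch18:the1}.(a) on $\hat G$ and Theorem~\ref{ch18:the4}.(b); for (b) you chain Lemma~\ref{ch18:lem3}.(b), part (a) applied to $\hat\rho$, and Theorem~\ref{ch18:the1}.(c), and you rightly read $\tau_{\hat\rho}$ as a measure on $\hat G$ tested against $S_0(\hat G)$. Two small remarks: the ``main obstacle'' you flag does not arise, since part (a) holds for GSPs over an arbitrary LCA group and so applies to $\hat\rho$ on $\hat G$ without identifying $\hat{\hat\rho}$ with $\rho$; and in your re-derivation of Lemma~\ref{ch18:lem3}.(b), test functions $f\in S_0(\hat G)$ are translated by elements of $\hat G$, not by $x\in G$.
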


\begin{proof}\
\begin{enumerate}\leftskip5pt
\item[(a)] Follows from Lemma~\ref{ch18:lem3}.(b), Theorem~\ref{ch18:the1}.(a), and
Theorem~\ref{ch18:the4}.(b).

\item[(b)] Follows from Lemma~\ref{ch18:lem3}.(b), part a, and
Theorem~\ref{ch18:the1}.(c).
\end{enumerate}
\end{proof}

\begin{rem}
Because of Corollary~\ref{ch18:cor6}.a, it is clear that $ \rho $ is white noise if and only if $ \rho $ is
stationary and orthogonally scattered.
\end{rem}

\begin{rem}
In the classical theory of stochastic processes, Corollary~\ref{ch18:cor6}.(a)
is called the ``spectral representation of a stationary process'' (cf.
\cite{do90} p.~527 or \cite{gisk74} p.~244).  The spectral measure
is called power spectrum or---if it has a continuous density---spectral
density of the process.

The following theorem
contains a characterization of the covariance of stationary GSPs.  The
necessary part is a kind of ``existence theorem'' for stationary GSPs.
\end{rem}

\setcounter{theorem}{6}

\begin{theorem}
\label{ch18:thm7}
\noindent For $\sigma \in
{}S'_{0}(G\times G)$ the following two properties are equivalent:

\noindent $\sigma $ is covariance of a stationary GSP $\Longleftrightarrow \sigma$ is diagonally invariant
and positive definite.
\end{theorem}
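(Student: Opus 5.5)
The necessity direction ($\Rightarrow$) is immediate from the definitions. If $\sigma=\sigma_\rho$ for a stationary GSP $\rho$, diagonal invariance is Theorem~\ref{ch18:the1}.(a). For positive definiteness, I read the term in the natural sense for kernels: $\langle\sigma,\sum_{i,j} c_i\bar c_j\, f_i\otimes\bar f_j\rangle\ge 0$ for finite families $f_i\in{S_{0}(G)}$ and $c_i\in\mathbb{C}$. This follows at once, because $\langle\sigma_\rho,f\otimes\bar g\rangle=(\rho(f)|\rho(g))$, so the double sum equals $\|\rho(\sum_i c_i f_i)\|_{\mathcal{H}}^2\ge 0$.

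For sufficiency ($\Leftarrow$) I will follow a GNS/Kolmogorov-type construction. Define a sesquilinear form on ${S_{0}(G)}$ by $B(f,g):=\langle\sigma,f\otimes\bar g\rangle$.
\begin{itemize}
\item Positive definiteness makes $B$ a positive semidefinite Hermitian form. Hermitian symmetry comes from polarization of a nonnegative quadratic form over $\mathbb{C}$.
\item $B$ is bounded, since $|B(f,g)|\le\|\sigma\|\,\|f\otimes\bar g\|_{S_0(G\times G)}\le C\|\sigma\|\,\|f\|_{S_0}\|g\|_{S_0}$. This uses the tensor product property $S_0(G\times G)={S_{0}(G)}\hat{\otimes}{S_{0}(G)}$ (norm equivalence) and the fact that conjugation is an isometry of $S_0$.
\item Let $N=\{f: B(f,f)=0\}$. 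By Cauchy--Schwarz this is a closed subspace. Let $\mathcal{H}$ be the Hilbert-space completion of ${S_{0}(G)}/N$ with inner product $B$, and set $\rho(f):=[f]$.
\item Then $\rho$ is linear and $\|\rho(f)\|_{\mathcal{H}}^2=B(f,f)\le C\|\sigma\|\,\|f\|_{S_0}^2$, so $\rho$ is a GSP in the sense of Definition~\ref{ch18:def2}.
\end{itemize}

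Next I check that $\sigma_\rho=\sigma$. By construction $\langle\sigma_\rho,f\otimes g\rangle=(\rho(f)|\rho(\bar g))=B(f,\bar g)=\langle\sigma,f\otimes g\rangle$ on elementary tensors. Both $\sigma_\rho$ and $\sigma$ are continuous on $S_0(G\times G)$, and finite sums of elementary tensors are dense there, so they coincide. Stationarity then follows from Theorem~\ref{ch18:the1}.(a), because $\sigma$ is diagonally invariant. Concretely, $(\rho(T_xf)|\rho(T_xg))=\langle T_{(-x,-x)}\sigma,f\otimes\bar g\rangle=(\rho(f)|\rho(g))$, using $T_xf\otimes T_x\bar g=T_{(x,x)}(f\otimes\bar g)$.

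The main obstacle is not technical but definitional. The plan must fix the meaning of ``positive definite'' for $\sigma\in S'_0(G\times G)$ so that it yields exactly the nonnegativity of $B$. If the paper intends the weaker condition $\langle\sigma,f\otimes\bar f\rangle\ge0$ for all single $f$, this is equivalent: applying it to $f=\sum_i c_if_i$ and expanding bilinearly recovers the finite-sum condition. I would note this explicitly.

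The only analytic input is the boundedness of $B$, and that reduces to the continuity of $\sigma$ combined with $\|f\otimes g\|_{S_0(G\times G)}\lesssim\|f\|_{S_0}\|g\|_{S_0}$, which the tensor product property provides. A point worth remarking is that $\mathcal{H}$ is the one produced by the construction, possibly nonseparable. This is permitted, since Definition~\ref{ch18:def2} allows an arbitrary Hilbert space.
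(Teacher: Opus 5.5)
Your GNS construction and the check $\sigma_\rho=\sigma$ are correct, and they coincide with the final paragraph of the paper's proof. You were also right that the issue is definitional, but the reading you commit to removes the content of the theorem. Taking ``positive definite'' to mean $\langle\sigma,h\otimes\bar h\rangle\ge0$ (positivity as a kernel) assumes what the paper has to prove, and it leaves diagonal invariance no role except producing stationarity.

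The paper's proof shows that positive definiteness is meant in Bochner's sense on the group $G\times G$: $\hat{\sigma}$ is a positive functional on $S_0(\hat{G}\times\hat{G})$ (``$\hat\sigma$ is positive, which is equivalent to the positive definiteness of $\sigma$''). This is also what Proposition~\ref{ch18:pro14} needs when it invokes Bochner's theorem on $G\times G$.

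Without diagonal invariance the two notions genuinely differ. For a character $\sigma(x,y)=\chi_s(x)\chi_t(y)$ of $G\times G$, $\hat\sigma$ is a point mass. Yet $\langle\sigma,f\otimes\bar f\rangle=\bigl(\int_G f\chi_s\bigr)\overline{\bigl(\int_G f\bar\chi_t\bigr)}$ takes negative values whenever $\chi_t\neq\bar\chi_s$. Conversely, $k\otimes\bar k$ with $k\in S_0(G)$ is a positive kernel, but its Fourier transform fails to be positive once $\hat k$ is real-valued and changes sign. The substance of the theorem is that the two notions agree for diagonally invariant $\sigma$, and your proposal does not address this.

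Concretely, two steps are missing.

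\emph{Sufficiency $(\Leftarrow)$.} You must derive $\langle\sigma,h\otimes\bar h\rangle\ge0$ from $\hat\sigma\ge0$ together with diagonal invariance.
\begin{enumerate}
\item[(i)] Diagonal invariance gives $\operatorname{supp}(\hat{\sigma})\subseteq\varDelta_G^\perp=\nabla\hat{G}=\{(t,-t)\mid t\in\hat{G}\}$.
\item[(ii)] For $f\in S_0(\hat{G})$, the function $f\otimes\bar f\checkm$ equals $|f(t)|^2$ at $(t,-t)$. It is not nonnegative on all of $\hat{G}\times\hat{G}$, so positivity of $\hat\sigma$ alone does not suffice.
\item[(iii)] Being positive, $\hat{\sigma}$ is a measure, and it is carried by $\nabla\hat{G}$. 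Hence $\langle\hat{\sigma},f\otimes\bar f\checkm\rangle\ge0$.
\item[(iv)] Transforming back with $\widehat{g^*}=\overline{\hat g}$ gives $\langle\sigma,h\otimes\bar h\rangle\ge0$ for $h=\hat f$.
\end{enumerate}
Only after this does your GNS step apply.

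\emph{Necessity $(\Rightarrow)$.} The identity $\langle\sigma_\rho,h\otimes\bar h\rangle=\|\rho(h)\|^2\ge0$ yields only kernel positivity. To get $\hat{\sigma}_\rho\ge0$, the paper passes to the spectral process.
\begin{enumerate}
\item[(i)] Stationarity of $\rho$ makes $\sigma_{\hat\rho}$ a positive measure carried by the diagonal. This is Corollary~\ref{ch18:cor6}.(b), which rests on Lemma~\ref{ch18:lem3}.(b) and on the positivity argument with $|f_\alpha|^2\to\delta_0$ in Theorem~\ref{ch18:the1}.(c).
\item[(ii)] By Theorem~\ref{ch18:the4}.(a), $\hat{\sigma}_\rho$ is $\sigma_{\hat\rho}$ reflected in the second variable, hence positive.
\end{enumerate}
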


\begin{proof}
$(\Rightarrow )$ The invariance was already shown in Theorem~\ref{ch18:the1}.(a).  By Corollary~\ref{ch18:cor6}.(b) it is clear that the covariance distribution of
$\hat{\rho }$ is positive and this implies [by Theorem~\ref{ch18:the4}.(a)] that
$\hat{\sigma }$ is positive, which is equivalent to the positive
definiteness of $\sigma$.

\noindent $(\Leftarrow )$ $\sigma$ being
diagonally invariant and positive definite it follows
that $\hat{\sigma}$  is supported by
$\varDelta _{G}^\perp = \nabla \hat{G} :=
\left\{ (t| -t) , t\in {}\hat{G} \right \}$  
(cf.\cite{fe80}, 
Thm.B2.iii) and that
$\langle\hat{\sigma },f\otimes \bar{f} \checkm \rangle \ge 0
\ \forall \, f \in S_{0}(\hat{G})$
(as $f\otimes \bar{f} \checkm $ is
non-negative on $\nabla \hat{G}$).

\noindent This implies
$\langle\sigma,\hat{f}\otimes \widehat{f^*}   \rangle
 \ge 0 \ \forall \, f \in S_{0}(\hat{G})$
 and this is, using again $ \widehat{g^*} = \bar{\widehat{g}},$
equivalent to
$$
\langle\sigma ,  h \otimes \bar{h}\rangle \ge  0
\ \forall \, \quad  h \in
S_{0}(G).
$$

We have proved that the form $Q(f,g) := \langle\sigma
,f\otimes \bar{g}\rangle $ defines a positive semi-definite sesquilinear
form on $S_{0}(G)\times S_{0}(G)$.  Since $N = \left\{ f \mid
\langle\sigma
,f\otimes \bar{f}\rangle = 0 \right\} $ is a linear subspace of
$S_{0}(G)$ $Q$ defines a canonical inner product on $\mathcal{H}_{1}:=
S_{0}(G)/N$.  The Hilbert space obtained by completion can be denoted by
$\mathcal{H}$.  It is then clear that the canonical projection, followed by
the embedding of $\mathcal{H}_{1}$ into $\mathcal{H}$, defines a bounded,
linear mapping $\rho $ from $S_{0}(G)$ into $\mathcal{H}$, i.e., it is a
GSP.  Of course $\sigma $ coincides with $\sigma _{\rho }$.  By the
diagonal invariance of $\sigma $ the stationarity of $\rho $ follows.
\qed
\end{proof}

\setcounter{corollary}{7}

\begin{corollary}
\label{ch18:cor8}
Let $\sigma
\in {}S'_{0}(G\times G)$:

\noindent Then $\sigma $ is the covariance of
an orthogonally scattered GSP $\rho$
\begin{align*}
& \Longleftrightarrow \: \exists \:  \tau \text{ positive and translation
bounded with: }
\langle\sigma ,f\otimes g\rangle\\ & =
\int^{}_{G} fg d\tau  \ \forall \,  f,g \in S_{0}(G)
\end{align*}
\end{corollary}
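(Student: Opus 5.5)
The forward direction ($\Rightarrow$) is essentially a restatement of Theorem~\ref{ch18:the1}.(c), and the converse follows the construction used in the proof of Theorem~\ref{ch18:thm7}.

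\emph{($\Rightarrow$).} Suppose $\sigma=\sigma_\rho$ for an orthogonally scattered GSP $\rho$. Theorem~\ref{ch18:the1}.(c) then yields a positive, translation bounded measure $\tau_\rho$ with $\langle\sigma_\rho,f\otimes g\rangle=\langle\tau_\rho,fg\rangle$ for all $f,g\in S_0(G)$. Writing the pairing as $\int_G fg\,d\tau$ is legitimate: $fg\in S_0(G)$, since $S_0(G)$ is a pointwise algebra. Moreover, translation bounded measures act on $S_0(G)\subseteq W(C_0,L^1)$ by integration, as recalled in the notation section.

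\emph{($\Leftarrow$).} Let $\tau$ be positive and translation bounded, and assume $\sigma$ is given by the formula. I will build a GSP realizing $\sigma$ as its covariance, copying the GNS-type construction from the proof of Theorem~\ref{ch18:thm7}.

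Define the sesquilinear form $Q(f,g):=\langle\sigma,f\otimes\bar g\rangle=\int_G f\bar g\,d\tau$. By positivity of $\tau$, $Q(f,f)=\int|f|^2\,d\tau\ge0$, so $Q$ is positive semidefinite. Boundedness comes from the embedding $\tau\in S_0'(G)$, which gives
\[
Q(f,f)\le\|\tau\|_{S_0'}\,\||f|^2\|_{S_0}\le C\|f\|_{S_0}^2 .
\]
Here we use that $S_0(G)$ is a Banach algebra under pointwise multiplication, so $\||f|^2\|_{S_0}\le C'\|f\|_{S_0}\|\bar f\|_{S_0}$, and that conjugation is isometric on $S_0$.

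Next, factor out the null space $N=\{f: Q(f,f)=0\}$ and complete $S_0(G)/N$ to a Hilbert space $\mathcal H$. Let $\rho$ be the quotient map followed by the embedding into $\mathcal H$. The estimate above makes $\rho$ bounded, so $\rho$ is a GSP, and by construction $(\rho(f)|\rho(\bar g))=Q(f,\bar g)=\langle\sigma,f\otimes g\rangle$. Hence $\sigma_\rho=\sigma$ on elementary tensors. Both $\sigma_\rho$ and $\sigma$ are continuous on $S_0(G\times G)=S_0(G)\hat\otimes S_0(G)$, so they agree everywhere.

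Finally, if $\operatorname{supp}f\cap\operatorname{supp}g=\emptyset$, then $f\bar g=0$, so $(\rho(f)|\rho(g))=\int f\bar g\,d\tau=0$ and $\rho$ is orthogonally scattered. One can also get this from Theorem~\ref{ch18:the1}.(c), second equivalence, read backwards.

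The only step needing care is the boundedness estimate for $Q$. It rests on the algebra property of $S_0(G)$ and on translation bounded measures defining elements of $S_0'(G)$. If the latter is not accepted as given, I would instead bound $|\langle\tau,h\rangle|$ by $\sup_x|\tau|(T_xk)$ times the $W(C_0,L^1)$ norm of $h$, using a bounded partition of unity, and then use $S_0\hookrightarrow W(C_0,L^1)$.
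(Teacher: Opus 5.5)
Your proof is correct. The forward direction matches the paper's, which simply cites Theorem~\ref{ch18:the1}(c), but for ($\Leftarrow$) you take a genuinely different route.

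The paper does not construct $\rho$ on $G$ directly. It passes to $\hat G$ and defines $\omega\in S_0'(\hat G\times\hat G)$ by $\langle\omega,f\otimes g\rangle:=\langle\hat\sigma,f\otimes g\checkm\rangle$; by Theorem~\ref{ch18:the4}(a) this is what the covariance of the spectral process has to be. It then asserts that $\omega$ is diagonally invariant and positive definite, and gets a stationary GSP on $\hat G$ with covariance $\omega$ from Theorem~\ref{ch18:thm7}. Finally it pulls this back by the inverse Fourier transform, deducing orthogonal scattering from Lemma~\ref{ch18:lem3}(b) and Corollary~\ref{ch18:cor6}(a).

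You instead run the GNS construction of Theorem~\ref{ch18:thm7} directly on $S_0(G)$ with $Q(f,g)=\int_G f\bar g\,d\tau$. This is more elementary and more self-contained:
\begin{itemize}
\item Positive semidefiniteness is immediate from positivity of $\tau$. The paper leaves its claims about $\omega$ unverified, and checking them takes a Fourier computation (for instance, that $\hat\omega$ is the image of $\tau$ under $x\mapsto(-x,x)$).
\item Orthogonal scattering is the one-line observation $f\bar g=0$. The paper instead appeals to Corollary~\ref{ch18:cor6}(a), which rests on Theorem~\ref{ch18:the4}(b) and the support/invariance duality of \cite{fe80}.
\item You actually justify the boundedness of $\rho$, which the proof of Theorem~\ref{ch18:thm7} only asserts. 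That bound can even be read off the hypothesis $\sigma\in S_0'(G\times G)$ together with $\|f\otimes\bar f\|_{S_0(G\times G)}\le C\|f\|_{S_0}^2$. So neither the algebra property of $S_0(G)$ nor the translation boundedness of $\tau$ is needed for that step.
\end{itemize}

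What the paper's route buys is conceptual rather than technical. It presents orthogonally scattered GSPs as exactly the Fourier transforms of stationary ones, so a single existence theorem covers both classes. This fits the paper's emphasis on the symmetric behaviour of $S_0$ under the Fourier transform.
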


\begin{proof}
$(\Rightarrow )$ Has been shown in Theorem~\ref{ch18:the1}.(c).

$(\Leftarrow )$ Let $\omega \in {}S'_{0}(\hat{G}\times
\hat{G})$ be defined in the following way:
\[
\langle\omega ,f\otimes g\rangle := \langle\hat{\sigma },f\otimes
\check{g}\rangle = \langle\sigma ,\hat{f}\otimes \check{g}
\hat{\;}\rangle = \int^{}_{G} \hat{f} \check{g} \hat{\;}d\tau \ \forall
\, f,g \in S_{0}(\hat{G}).
\]

\noindent Then $\omega $  is  diagonally  invariant  and  positive
definite.  By Theorem~\ref {ch18:thm7} the existence of a stationary GSP $\hat{\rho} $
over $\hat{G}$ with covariance $\omega $ follows.  Hence, by Theorem~\ref{ch18:the4}.(a),
$\rho $ is a GSP over $G$ with covariance $\sigma $, which is
orthogonally scattered in view of Lemma~\ref{ch18:lem3}.(b) and Corollary~\ref{ch18:cor6}.(a). \qed
\end{proof}

The following theorem characterizes white noise
(cf.  Definition~\ref{ch18:def3}.(c) in three different ways.

\setcounter{theorem}{8}
\begin{theorem}
A GSP $\rho $ is white noise if and only if one
of the following (equivalent) conditions is satisfied:
\begin{enumerate}\leftskip5pt
\item[(a)] $\exists \: c \ge 0\text{ such that }(\rho (f)| \rho (\bar{g})) = c\int^{}_{G} f(t)g(t)dt \ \quad \forall \, f,g \in {S_{0}(G)}; $

\item[(b)] $\| \rho (f)\| _{\mathcal{H}} = c\| f\| _{2}$ for
some $c \ge 0 $ and $\ \, \, \forall \, f \in {S_{0}(G)}$
(i.e., $\rho $ is
a scalar multiple of an isometry between $L^{2}(G)$ and $\mathcal{H})$.
\end{enumerate}
\end{theorem}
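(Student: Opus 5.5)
I would prove the cycle white noise $\Rightarrow$ (a) $\Rightarrow$ (b) $\Rightarrow$ white noise.

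\emph{White noise $\Rightarrow$ (a).} Suppose $\rho$ is white noise. By the Remark after Corollary~\ref{ch18:cor6}, $\rho$ is orthogonally scattered, so Theorem~\ref{ch18:the1}.(c) yields a positive translation bounded measure $\tau_\rho$ with $\langle\sigma_\rho,f\otimes g\rangle=\langle\tau_\rho,fg\rangle$. Stationarity gives diagonal invariance of $\sigma_\rho$ by Theorem~\ref{ch18:the1}.(a). Hence
\[
\langle\tau_\rho,T_{-x}(fg)\rangle=\langle\sigma_\rho,T_{-x}f\otimes T_{-x}g\rangle=\langle\tau_\rho,fg\rangle .
\]
The products $fg$ with $f,g\in S_0(G)$ span a dense subspace of $S_0(G)$; for instance, every $h\in A_c(G)$ equals $h\cdot k$ with $k\in A_c(G)$, $k\equiv1$ on $\operatorname{supp}h$. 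So $\tau_\rho$ is a translation invariant positive measure. By uniqueness of Haar measure, $\tau_\rho=c\,dx$ with $c\ge0$. Recalling $\langle\sigma_\rho,f\otimes g\rangle=(\rho(f)|\rho(\bar g))$, this is exactly (a).

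\emph{(a) $\Rightarrow$ (b).} Take $g=\bar f$ in (a) to get $\|\rho(f)\|^2_{\mathcal H}=c\|f\|_2^2$. Replacing $c$ by $\sqrt c$ gives (b). Since $S_0(G)$ is dense in $L^2(G)$, $\rho$ extends to $\sqrt c$ times an isometry on $L^2(G)$.

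\emph{(b) $\Rightarrow$ white noise.} By polarization, (b) gives $(\rho(f)|\rho(g))=c^2\langle f,g\rangle_{L^2}$ for all $f,g\in S_0(G)$. Both $T_x$ and $M_\chi$ are unitary on $L^2(G)$ and preserve $S_0(G)$, so the right-hand side is unchanged when $f,g$ are replaced by $T_xf,T_xg$ or by $M_\chi f,M_\chi g$. Therefore $\rho$ is stationary and frequency stationary, i.e.\ white noise. This step is routine. It also gives (a) directly, which closes the cycle and makes (a) and (b) equivalent.

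\emph{Main obstacle.} The delicate step is the density argument in white noise $\Rightarrow$ (a). We know invariance of $\tau_\rho$ only on products $fg$, and must pass to invariance as a Radon measure, i.e.\ on all of $C_c(G)$. I would handle this by showing invariance on $A_c(G)$, which is dense in $C_c(G)$ in the inductive limit topology, together with the fact that $\tau_\rho$ is a positive Radon measure. If that route proved awkward, I would instead use frequency stationarity. By Corollary~\ref{ch18:cor6}.(b) and Lemma~\ref{ch18:lem3}.(b), the spectral measure $\tau_{\hat\rho}$ must then also be translation invariant, hence a multiple of Haar measure on $\hat G$. Plancherel would then transfer this back to $G$.
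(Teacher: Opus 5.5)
Your proposal is correct and takes essentially the same route as the paper. White noise $\Rightarrow$ (a) goes through Corollary 6(a), Theorem 1(a),(c) and uniqueness of Haar measure, and (b) $\Rightarrow$ white noise goes by polarization; the differences are that you make explicit the density step the paper leaves implicit (invariance of $\tau_\rho$ on $A_c(G)$, extended to $\mathcal{C}_c(G)$), and you close the cycle through (a) $\Rightarrow$ (b) with $g=\bar f$, which is exactly how the paper gets white noise $\Rightarrow$ (b).
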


\begin{proof}

(a) This follows from  Theorem.1.(a) and (c) and
Corollary~\ref{ch18:cor6}.(a) together with the uniqueness of the Haar measure as a
positive translation invariant measure (cf.  \cite{re68} Chap. 3,3.1 and
references there).

(b) $\Rightarrow $ white noise) By means of the polar decomposition,
$(\rho (f)| \rho (\bar{g}))$ can be expressed with the help of
$\| \rho(f)\| _{\mathcal{H}}=c\| f\| _{2}$
and $\| \rho (\bar{g})\| _{\mathcal{H}} = c\| g\| _{2}$.
Thus it is clear that $(\rho (f)| \rho(\bar{g}))$
is not changed by modulation or translation.

(White noise $\Rightarrow$ b) Follows from part a.\qed
\end{proof}

\section{Relations Between GSPs and Other Theories}
\label{ch18:sec5}

It is clear that it is not possible to associate arbitrary
GSPs with (ordinary) stochastic processes, as there are GSPs with a covariance
distribution which cannot be represented by an ordinary function. But
we shall prove that any GSP with a covariance distribution
induced by some continuous bounded function in $C^{b}(G\times G)$
can be identified with a
uniquely determined stochastic process in the classical sense. On the other
hand any mean square continuous stochastic process can be identified with a
uniquely determined GSP. The exact formulation of this fact is contained in
the following theorem.

\begin{theorem}\label{ch18:the10}
Let $\rho $ be a GSP with covariance ${\sigma_{ \rho}}$:
\begin{enumerate}\leftskip5pt
\item[(a)] If $\sigma _{\rho} \in  S'_{0}(G\times G)
\text{is represented by some }
h \in  C^{b}(G\times G)$, then
 $\rho (f_{\alpha })$ is a Cauchy net in $\mathcal{H}$ whenever
$(f_{\alpha })_{\alpha \in I}$ is a vaguely convergent, $L^{1}$-bounded, and tight net in ${S_{0}(G)}$.

\item[(b)] In the above situation $\rho $ extends to
 a bounded linear operator
$\tilde{\rho}: M(G) \mapsto \mathcal{H}$ , which is $\sigma $-norm
continuous on tight subsets of $M(G)$, meaning that it converts
vaguely convergent, bounded, and  tight nets $(\mu_\alpha)$
into norm convergent nets $\tilde{\rho}(\mu_\alpha)$ in $\mathcal{H}$.
In particular,
$\lim_{y\rightarrow x} \tilde{\rho}(\delta _{y}) = \tilde{\rho}(\delta
_{x})$ in $\mathcal{H}$.  If $\left\{ \rho (f), f \in {S_{0}(G)} \right\} $ is dense in $\mathcal{H}$,
the extension $\tilde{\rho}$ is uniquely determined.

\item[(c)] The mapping $\rho_G \colon x \mapsto \tilde{\rho}(\delta_x)$ is a bounded, continuous stochastic process on $G$ and
$h(x,y) := (\tilde{\rho}(\delta_x) | \tilde{\rho}(\delta_y))$
is the covariance function of $\rho_G$.

\item[(d)] For any continuous and bounded stochastic process
 $\rho_{1}:G \mapsto  \mathcal{H}$,
the covariance function of $\rho _{1}  $ given by
$h(x,y) := (\rho_{1}(x)| \rho _{1}(y))$
is bounded and continuous on $G\times $G.  By vector-valued
integration, $\rho _{1}$ may be lifted to a bounded linear mapping
$\tilde{\rho}_{1}: M(G) \mapsto  \mathcal{H},$ which is $\sigma$-norm
continuous on bounded tight subsets. By
way of restriction to ${S_{0}(G)}$, $\tilde{\rho}_{1}$ may be considered
as a GSP  and $h$
represents the covariance distribution of this GSP.
\end{enumerate}
\end{theorem}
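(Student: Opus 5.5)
The basic identity throughout is $\|\rho(f)-\rho(g)\|_{\mathcal H}^2=\langle\sigma_\rho,(f-g)\otimes\overline{(f-g)}\rangle$. When $\sigma_\rho$ is represented by $h\in C^b(G\times G)$, this equals
\[
\iint_{G\times G} h(x,y)\,(f-g)(x)\,\overline{(f-g)(y)}\,dx\,dy .
\]
First I check that this integral representation is legitimate on elementary tensors. For $f,g\in S_0(G)\subset L^1(G)$, the product $f\otimes\bar g$ lies in $L^1(G\times G)$, and the regular distribution defined by $h$ acts on it by integration. So $(\rho(f)\mid\rho(g))=\iint h(x,y)f(x)\overline{g(y)}\,dx\,dy$ holds for all $f,g\in S_0(G)$.

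\textbf{Part (a).} Let $(f_\alpha)$ be vaguely convergent, $L^1$-bounded by $C$, and tight. Then the measures $\mu_{\alpha,\beta}:=f_\alpha\otimes\bar f_\beta$ form a bounded, tight net in $M(G\times G)$, and they converge vaguely in the pair $(\alpha,\beta)$ to $\mu_0\otimes\bar\mu_0$, where $\mu_0$ is the vague limit. Here I use that the product of vaguely convergent, bounded, tight nets converges vaguely: one tests against $k_1\otimes k_2$ and uses the density of $\mathcal{C}_c(G)\otimes\mathcal{C}_c(G)$ in $\mathcal{C}_c(G\times G)$ together with boundedness.

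The key step is that a bounded, tight, vaguely convergent net of measures converges against every function in $C^b$. Given $\epsilon>0$, tightness yields a $k\in\mathcal{C}_c(G\times G)$ with $\|k\mu_{\alpha,\beta}-\mu_{\alpha,\beta}\|_M\le\epsilon$ uniformly in $(\alpha,\beta)$. Then $\mu_{\alpha,\beta}(h)$ is $\epsilon\|h\|_\infty$-close to $\mu_{\alpha,\beta}(kh)$. Since $kh\in\mathcal{C}_c$, the latter converges.

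It follows that $(\rho(f_\alpha)\mid\rho(f_\beta))\to L:=\iint h\,d(\mu_0\otimes\bar\mu_0)$ jointly in $(\alpha,\beta)$. Expanding
\[
\|\rho(f_\alpha)-\rho(f_\beta)\|^2=(\rho f_\alpha\mid\rho f_\alpha)-2\operatorname{Re}(\rho f_\alpha\mid\rho f_\beta)+(\rho f_\beta\mid\rho f_\beta),
\]
the right-hand side tends to $L-2L+L=0$. This is the step I expect to be the main technical point: obtaining joint (double-net) convergence from tightness rather than merely iterated convergence.

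\textbf{Part (b).} For $\mu\in M(G)$, I take the net $f_\alpha=e_\alpha\ast\mu$ with $(e_\alpha)$ a bounded approximate unit in $S_0(G)$, made up of compactly supported functions, multiplied by compactly supported cutoffs $k_\alpha\in A_c(G)$ forming a bounded approximate unit for pointwise multiplication. This net lies in $S_0(G)$ since $L^1\ast S_0\subset S_0$ and $A_c\cdot$ preserves $S_0$. It is $L^1$-bounded by $C\|\mu\|_M$, tight, and converges vaguely to $\mu$. I then define $\tilde\rho(\mu):=\lim\rho(f_\alpha)$.

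Well-definedness and independence of the approximating net follow by interleaving two such nets into one tight, vaguely convergent net and applying (a). Linearity is immediate. The bound $\|\tilde\rho(\mu)\|^2=\iint h\,d\mu\,d\bar\mu\le\|h\|_\infty\|\mu\|_M^2$ holds by the same limiting argument.

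The continuity property is proved the same way. Given a bounded, tight, vaguely convergent net $(\mu_\alpha)$, the formula $(\tilde\rho\mu\mid\tilde\rho\nu)=\iint h\,d\mu\,d\bar\nu$ together with the key step above gives $\|\tilde\rho\mu_\alpha-\tilde\rho\mu\|\to0$. The special case $\delta_y\to\delta_x$ follows directly. For uniqueness, the values on $S_0(G)$ together with this continuity force the extension, since every $\mu$ is such a limit of elements of $S_0(G)$. The density hypothesis on $\{\rho(f)\}$ is only needed to pin down the target inside $\mathcal H$.

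\textbf{Part (c).} Continuity of $\rho_G$ is the special case of (b). Boundedness follows from $\|\tilde\rho(\delta_x)\|^2=h(x,x)\le\|h\|_\infty$. The covariance formula is $(\tilde\rho\delta_x\mid\tilde\rho\delta_y)=\iint h\,d\delta_x\,d\delta_y=h(x,y)$, using the sesquilinear formula established in (b).

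\textbf{Part (d).} Continuity and boundedness of $h(x,y)=(\rho_1(x)\mid\rho_1(y))$ follow from the continuity of the inner product. I define $\tilde\rho_1(\mu)$ weakly through the Riesz representation:
\[
(\tilde\rho_1\mu\mid v)=\int(\rho_1(x)\mid v)\,d\mu(x)\quad\text{for } v\in\mathcal H .
\]
This gives $\|\tilde\rho_1\mu\|\le\sup_x\|\rho_1(x)\|\,\|\mu\|_M$ and $(\tilde\rho_1\mu\mid\tilde\rho_1\nu)=\iint h\,d\mu\,d\bar\nu$ by Fubini. The $\sigma$-norm continuity on bounded tight sets then follows exactly as in (b). Restricting $\tilde\rho_1$ to $S_0(G)\subset L^1(G)$ gives a GSP, because the $L^1$-norm is dominated by the $S_0$-norm, and its covariance on $f\otimes g$ is $\iint h\,f\,g$. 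Hence $h$ represents that covariance.
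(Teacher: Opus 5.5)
Your proof is correct, but it is organized differently from the paper's. In (a) the paper isolates no lemma. It splits $\langle\sigma_\rho,(f_\alpha-f_\beta)\otimes(\bar f_\alpha-\bar f_\beta)\rangle$ with a cutoff $k$ into a localized $k\otimes k$ term and tail terms. It bounds the tails by $\|h\|_\infty$ times the tightness error, and simply asserts that the localized term tends to $0$ by vague convergence. You use the same mechanism (tightness cutoff plus vague convergence on compacts), but package it as a general fact: bounded, tight, vaguely convergent nets integrate $C^b$ functions convergently. You then apply it to the product net $f_\alpha\otimes\bar f_\beta$ over $I\times I$, which makes explicit the joint double-net convergence the paper passes over.

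The larger difference is in (b)--(c).
\begin{itemize}
\item The paper defines $\tilde\rho(\mu)$ weakly through $(\tilde\rho(\mu)\mid\rho(g))$, which is where it uses density of $\rho(S_0(G))$. It proves continuity by re-approximating each $\mu_\beta$ by some $f_{(U,\beta)}\in S_0(G)$ and invoking (a). It then infers boundedness from that continuity and obtains (c) through a Dirac net.
\item You define $\tilde\rho(\mu)$ directly as the norm limit supplied by (a) and prove the single identity $(\tilde\rho\mu\mid\tilde\rho\nu)=\iint h\,d(\mu\otimes\bar\nu)$. From it you read off boundedness with the explicit constant $\|h\|_\infty^{1/2}$, and continuity by expanding $\|\tilde\rho\mu_\alpha-\tilde\rho\mu\|^2$. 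You also get the covariance formula in (c) by taking $\mu=\delta_x$, $\nu=\delta_y$.
\end{itemize}
Your route is shorter and more transparent. It also shows the density hypothesis is unnecessary: existence needs only completeness of $\mathcal H$, and uniqueness already follows from the continuity requirement. The paper's weak definition stays closer to the duality viewpoint but is more roundabout. Part (d) is essentially the paper's argument.

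One small slip: $e_\alpha*\mu\in S_0(G)$ requires $M(G)*S_0(G)\subset S_0(G)$, not $L^1*S_0\subset S_0$. Also, the cutoffs $k_\alpha$ are unnecessary, because $e_\alpha*\mu$ is already tight when the $e_\alpha$ are $L^1$-bounded and supported in a fixed compact set.
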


\begin{proof}\
\begin{enumerate}\leftskip5pt
\item[(a)] Assume now that $(f_\alpha)$ is a net in ${S_{0}(G)}$, which  is a vaguely
convergent, bounded, and tight in $M(G)$.
To prove that the net
$\rho(f_{\alpha})_{\alpha \in I}$ is a Cauchy net
in $\mathcal{H}$, we use the fact that for any $k \in \mathcal{C}_c(G)$
one has
\begin{gather*}
\| \rho (f_{\alpha }){-}\rho (f_{\beta })\| ^{2}_{\mathcal{H}}
{=} (\rho (f_{\alpha }-f_{\beta })| \rho (f_{\alpha }-f_{\beta })) {=}
\langle\sigma _{\rho},(f_{\alpha }{-}f_{\beta })\otimes (\bar{f}_{\alpha
}{-}\bar{f}_{\beta })\rangle {=} \\
{=} \langle\sigma _{\rho} \cdot k \otimes k,(f_{\alpha }{-}f_{\beta
})\otimes
(\bar{f}_{\alpha }-\bar{f}_{\beta })\rangle + \langle\sigma _{\rho},k(f_{\alpha }-f_{\beta })\otimes (1-k)(\bar{f}_{\alpha
}-\bar{f}_{\beta })\rangle + \\
+ \langle\sigma _{\rho},(1-k)(f_{\alpha }-f_{\beta })\otimes
(\bar{f}_{\alpha }-\bar{f}_{\beta })\rangle.
\end{gather*}
As $f_{\alpha}$ is tight and $L^{1}$-bounded,
there exists $k \in  \mathcal{C}_c(G)$ with
$\| (1-k)$\break $(f_{\alpha } - f_{\beta })
\| _{1}< \epsilon  \ \forall \,  \alpha  \in I$. Thus
the second of the three terms can be handled in the following way:
\begin{gather*}
| \langle\sigma _{\rho},k(f_{\alpha }-f_{\beta })\otimes
(1-k)(\bar{f}_{\alpha }-\bar{f}_{\beta })\rangle| =
\\
{=} \left| \int^{}_{G} \left( \int^{}_{G}
h_{\rho}(x,y)k(x)(f_{\alpha }(x){-}f_{\beta }(x)) dx \right) (1{-}k(y))(\bar{f}_{\alpha }(y){-}\bar{f}_{\beta
}(y)) dy \right| \le
\\
\le  \| h_{\rho}\| _{\infty
} \int^{}_{G}    \left( \int^{}_{G} | k(x)(f_{\alpha }(x){-}f_{\beta }(x))|  dx \right) |
(1{-}k(y))(\bar{f}_{\alpha }(y)-\bar{f}_{\beta }(y))|  dy \le \\
\le  \| h_{\rho}\| _{\infty } \, C \int^{}_{G} |
(1-k(y))(\bar{f}_{\alpha }(y)-\bar{f}_{\beta }(y))|  dy
\end{gather*}
and the last integral can be made arbitrarily small by suitable
choice of $k$ above, \textit{independent} of $\alpha$ and $\beta$.
The third term can be treated in the same way as the second.
As the first term  converges to  0 if $f_{\alpha}$
is  vaguely convergent, it follows that
$\| \rho (f_{\alpha}) - \rho (f_{\beta})\| _{\mathcal{H}}$
tends to zero and this implies
that $(\rho (f_{\alpha }))_{\alpha \in I}$
is a Cauchy net in the norm topology.

\item[(b)] As any measure in $M(G)$ can be represented as the
$w^*$-limit of a bounded, tight net of functions in $S_0(G)$,
the density  of
$\left\{  \rho (f) , f \in {S_{0}(G)} \right\} $
in $\mathcal{H}$ implies the existence of a uniquely
determined element denoted by
$\tilde{\rho}(\mu ) \in  \mathcal{H}$,
such that $(\tilde{\rho}(\mu )| \rho (g)) := \lim_{\alpha }(\rho
(f_{\alpha })| \rho (g))  \ \forall \,  g \in {S_{0}(G)}$. The notation is
justified because it is independent of the choice of the net
$(f_{\alpha })$ with $\lim_{\alpha} f_{\alpha} = \mu$.
This can be seen using the proof of part a)
with two different $L^1$-bounded, tight nets with the
same vague limit.

Now we show the continuity of $\tilde{\rho}:$
Let $(\mu _{\beta })_{\beta \in J}$  be a bounded and tight
net in $M(G),\,  w^\ast$-convergent  with  limit $\mu$.
Then we find that for any ${w}^*$-neighborhood
$  U := U(k_{1},k_{2},\ldots ,k_{n},\epsilon ) \in  \mathcal{U},$
 with $ k_{i} \in  {S_{0}(G)}  $, one can find for every $\beta$  some  $ f_{(U,\beta)} \in  {S_{0}(G)}$
such that the following holds:
\begin{gather*}
| \langle\mu _{\beta },k_{i}\rangle - \langle f_{(U,\beta)},k_{i}\rangle|
< \epsilon /2, \quad \forall \,  i=1\ldots n, \ \forall \, \beta\\
\text{and }\quad \quad | \langle\mu _{\beta },k_{i}\rangle - \langle\mu
,k_{i}\rangle|  < \epsilon /2,  \quad \forall \; \beta  > \beta _{0}.
\end{gather*}
Combining these two estimates one obtains
\[
| \langle \mu ,k_{i}\rangle - \langle
f_{(U,\beta)},k_{i}\rangle|  <  \epsilon,
 \quad \forall \,  i=1\ldots n \ \forall \,  \beta  > \beta _{0}.
\]
We have shown: $(f_{(U,\beta)})$ is a
$w^\ast$-convergent, $L^{1}$-bounded, and
tight net with the same limit
$\mu $ and $f_{(U,\beta)} \in {S_{0}(G)}$, for any
$U \in \mathcal{U}$.
Part a) implies that $\rho(f_{(U,\beta)})$ converges in
norm and
the limit is called $\tilde{\rho}(\mu )$ according to the above
definition.  Due to the construction of $f_{(U,\beta)}$, it is easy to
see that
$\tilde{\rho}(\mu _{\beta })$ converges to $\tilde{\rho}(\mu )$ as
well and this shows the ${w}^*$-to-norm continuity of $\tilde{\rho}$
on bounded, tight subsets. Furthermore, this implies the norm-norm
continuity and thus the boundedness of the mapping $\tilde{\rho}$
follows.

\item[(c)] The continuity of
$\rho _{G}$ follows from the ${w}^*$-to-norm continuity of
$\tilde{\rho}$ on tight subsets.
The boundedness of $\rho _{G}$ follows
from the fact that $\tilde{\rho}$ is bounded with respect to $\| .
\|_M$ together with
$\| \delta_x \|_M \,=\,1 \, \ \forall \, \, x \in G$.

 Let $(f_{\alpha })_{\alpha \in I}$  be tight, $L^{1}$-bounded
and  vaguely  convergent  with
limit $\delta _{0}$ (i.e., a generalized ``Dirac sequence'');
then the following completes the proof of part~c):
\begin{gather*}
h(x,y) = \int^{}_{G\times G} h(t) (\delta _{x}\otimes \delta _{y}) dt =
\lim_{\alpha } \langle h , T_{x}f_{\alpha }\otimes T_{y}f_{\alpha }
\rangle =\\
= \lim_{\alpha } \langle \sigma _{\rho} , T_{x}f_{\alpha }\otimes
T_{y}f_{\alpha } \rangle = (\tilde{\rho}(\delta _{x})| \tilde{\rho}(\bar{\delta }_{y})) = (\tilde{\rho}(\delta _{x})| \tilde{\rho}(\delta _{y})).
\end{gather*}

\item[(d)] The estimate
$ |h(x,y)| = |(\rho_{1}(x)| \rho _{1}(y))| \leq 
\| \rho_{1}(x)\|_{\mathcal{H}}\|\rho_{1}(y)\|_{\mathcal{H}} \le  c^{2}$
proves that $h$ is bounded. The continuity of $h$ results from the
continuity of $\rho _{1}$ and the inner product.
\end{enumerate}

With the help of vector-valued integration we can define
$ \tilde{\rho}_{1}(\mu)$ in the weak sense. In fact, for any
$ l \in \mathcal{H}$, the function $ (\rho _{1}(x)| l)$ is bounded and continuous;
hence the following integral makes sense:
\[
\bigl( \tilde{\rho}_{1}(\mu )| l\bigr) :=
\int^{}_{G} (\rho_{1}(x)| l) d\mu \quad \text{for} \,\, l \in \mathcal{H}\text{ and }\mu \in M(G).
\]
In view of the  Riesz  representation  theorem, $\tilde{\rho}_{1}(\mu )$  is  a  well-defined element of $\mathcal{H}$,
as $|(\tilde{\rho}_1(\mu)|l)| \: \leq \: c  \| \mu \|_M
\| l \|_{\mathcal{H}}$. The last inequality implies
$\| \tilde{\rho}_1(\mu) \|_\mathcal{H} \: \leq \: c \|
\mu \|_M$ and thus the boundedness of $\widetilde{\rho_1}$ with
respect to $\| . \|_M$.

To prove the ${w}^*$-to-norm continuity, we take a bounded, tight,
$w^\ast$-convergent net $\mu_{\alpha}$ in $M(G)$ with limit $\mu$. As
the mapping $x \mapsto ({\rho_1}(x)|l)$ is continuous and bounded for
any $l \in \mathcal{H}$ and $\mu_{\alpha}$ is tight we get it is enough
to know that the integrand is uniformly bounded and uniformly convergent
over compact subset of $G$, in order to conclude that
\begin{align*}
\lim_{\alpha}(\tilde{\rho}_1(\mu_{\alpha})|l) & = \lim_{\alpha} \int_G({\rho}_1(x)|l) d\mu_{\alpha}\\
& = \int_G({\rho}_1(x)|l) d\mu
 = (\tilde{\rho}_1(\mu)|l),
\end{align*}
which shows the ${w}^*-{w}^*$ continuity of $\tilde{\rho}_1$.
The convergence of
$\|\tilde{\rho}_1(\mu_{\alpha}) \|$
is shown by the following equality:
\begin{align*}
\lim_{\alpha} (\tilde{\rho}_1(\mu_{\alpha}) | \tilde{\rho}_1(\mu_{\alpha})) & = \lim_{\alpha }\int_G \int_G
(\rho_1(x)|\rho_1(y)) d\mu_{\alpha} d\mu_{\alpha}\\
& = \int_G \int_G (\rho_1(x)|\rho_1(y)) d\mu d\mu \quad
  = \, \,  \|\tilde{\rho}_1(\mu)\|^2,
\end{align*}
which is true as $(\rho_1(x)|\rho_1(y))$ is continuous and
bounded and $\mu_{\alpha}$ is $w^\ast$-convergent,
bounded, and tight. The last result together with the
${w}^*$-to-norm continuity of $\tilde{\rho}_1$
implies the ${w}^*$-to-norm continuity, and our claim is proved.
Furthermore 
we get the required GSP by restriction to $ {S_{0}(G)}$:
$\rho  := \tilde{\rho}_{1}| _{S_{0}}$.

It remains to be shown that
$h(x,y) = (\rho _{1}(x)| \rho_{1}(y))$
represents  the covariance distribution
$\sigma _{\rho}$ of $\rho $. This follows from the identity
\begin{gather*}
\langle \sigma _{\rho},f\otimes g \rangle = \bigl( \rho (f)| \rho
(\bar{g})\bigr)  = \int^{}_{G} (\rho _{1}(x)| \rho (\bar{g})) f(x) dx \\ \quad\quad\quad\quad\quad\quad\quad=
\int^{}_{G} \int^{}_{G} g(y)(\rho _{1}(x)| \rho _{1}(y)) dy f(x) dx\\
=
\int^{}_{G} \int^{}_{G}h(x,y) f(x) g(y) dx dy
= \langle h,f\otimes g\rangle \text{ for }f,g \in {S_{0}(G)}.
\quad \quad \qed 
\end{gather*}
\end{proof}

\begin{rem}
As any measure in $M(G)$ can be represented as the
$w^\ast$-limit of a bounded tight net of functions in ${S_{0}(G)}$
(e.g.\ by convolving $\mu \in M(G)$ with a Dirac sequence
$f_\alpha \in {S_{0}(G)}$ and using the fact that ${S_{0}(G)} \ast M(G) \subset {S_{0}(G)}$)
or as a limit of a net of discrete (and bounded) measures, the ${w}^*$-to-norm continuity of the mappings $\tilde{\rho}$ and
$\widetilde{\rho_1}$ on tight, bounded subsets implies the uniqueness of these
extensions from ${S_{0}(G)}$ or $G$ to $M(G)$. Therefore Theorem~\ref{ch18:the10} describes a bijective identification between continuous bounded stochastic processes and GSPs with continuous bounded covariance.
\end{rem}

\setcounter{corollary}{10}
\begin{corollary}\label{ch18:cor11}
Let $\rho $ be a V-bounded GSP. Then one has:
\begin{enumerate}\leftskip5pt
\item[(a)] $\rho $ can be identified
with a uniquely determined stochastic process.

\item[(b)] $\rho $ extends to $M(G)$ and $\hat{\rho}$ to
$\mathcal{F}(M(G))$; therefore
\[
\rho (\mu ) = \hat{\rho}(h)\text{  if }\hat{\mu } = \check{h} ,
\text{ and in particular, }
\rho (\delta _{x}) = \hat{\rho}(\chi _{x}).
\]
\end{enumerate}
\end{corollary}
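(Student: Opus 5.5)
The plan is to reduce everything to Theorem~\ref{ch18:the10} by showing that V-boundedness of $\rho$ forces $\sigma_\rho$ to be represented by a bounded continuous function on $G\times G$.

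\emph{Step 1: the spectral process is a bounded measure process.} By Lemma~\ref{ch18:lem3}.(a), $\rho$ is V-bounded if and only if $\hat\rho$ is bounded ($\|\cdot\|_\infty$-bounded) on $S_0(\hat G)$. So $\hat\rho$ extends, by density of $S_0(\hat G)$ in $C_0(\hat G)$, to a bounded linear operator $C_0(\hat G)\to\mathcal{H}$. By Theorem~\ref{ch18:the1}.(b), $\sigma_{\hat\rho}$ extends to a bimeasure $\beta$ on $\hat G\times\hat G$.

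\emph{Step 2: a continuous bounded kernel for $\sigma_\rho$.} Theorem~\ref{ch18:the4}.(a), together with Fourier inversion on $S_0$, gives for $f,g\in S_0(G)$
\[
\langle\sigma_\rho,f\otimes g\rangle = (\hat\rho(\check f)\,|\,\hat\rho(\overline{\check g})).
\]
Here I write $\check f,\check g\in S_0(\hat G)$ for the preimages of $f$ and $g$ under the Fourier transform. Now define
\[
h(x,y):=\langle \beta, \chi_x\otimes \bar\chi_y\rangle ,
\]
where $\chi_x$ is the character on $\hat G$ given by $x$, and $\beta$ is applied to the tensor product of bounded continuous functions. This step needs care, because $\chi_x\notin C_0(\hat G)$.

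To make sense of $\hat\rho(\chi_x)$, I would use the remark after Theorem~\ref{ch18:the10}. For a bounded operator $C_0(\hat G)\to\mathcal H$, tight uniformly convergent approximations on compacts give limits, because a bounded operator from $C_0$ into a Hilbert space is weakly compact and so is given by a countably additive vector measure. I would approximate $\chi_x$ by $k_\alpha\chi_x$ with $k_\alpha\in A_c(\hat G)$, $0\le k_\alpha\le1$, and $k_\alpha\to1$ locally uniformly. Countable additivity of the $\mathcal H$-valued measure then gives norm convergence of $\hat\rho(k_\alpha\chi_x)$. Call the limit $\hat\rho(\chi_x)$; it satisfies $\|\hat\rho(\chi_x)\|\le c$.

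Set $\rho_G(x):=\hat\rho(\chi_x)$. Continuity of $x\mapsto\rho_G(x)$ in norm follows from dominated convergence for the vector measure. Hence
\[
h(x,y)=(\rho_G(x)\,|\,\rho_G(y))
\]
lies in $C^b(G\times G)$.

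Integrating against $f\otimes g$ and interchanging the vector integral with the integration over $G$ (Fubini in weak form) gives $\rho(f)=\int f(x)\rho_G(x)\,dx$. Indeed, $\int f(x)\chi_x\,dx$ is precisely the function on $\hat G$ mapped to $f$ by the Fourier transform. Therefore $h$ represents $\sigma_\rho$, up to the conjugation conventions of Definition~\ref{ch18:def6}.

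\emph{Step 3: conclusions.} Part (a) now follows from Theorem~\ref{ch18:the10}.(a)--(c), with uniqueness from the remark following it. In fact $\rho_G$ is already the continuous bounded process, and Theorem~\ref{ch18:the10}.(d) confirms the identification.

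For (b), $\rho$ extends to $\tilde\rho$ on $M(G)$ by Theorem~\ref{ch18:the10}.(b). Meanwhile $\hat\rho$ extends to $\mathcal F(M(G))\subset C^b(\hat G)$ by integrating against the vector measure:
\[
\hat\rho(\hat\mu)=\int \rho_G\,d\mu .
\]
Both extensions are $w^*$-to-norm continuous on bounded tight sets and agree on $S_0$. Approximating $\mu$ by $\mu*f_\alpha$ with a Dirac net in $S_0(G)$ therefore yields $\rho(\mu)=\hat\rho(h)$ whenever $\hat\mu=\check h$, with the paper's convention on which transform is inverted. In particular, $\mu=\delta_x$ gives $\rho(\delta_x)=\hat\rho(\chi_x)$.

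The main obstacle is Step 2: justifying that a $\|\cdot\|_\infty$-bounded $\hat\rho$ can be evaluated on the non-vanishing characters $\chi_x$ with norm continuity in $x$. I would handle this via weak compactness of operators $C_0\to\mathcal H$, which gives a countably additive regular $\mathcal H$-valued measure.
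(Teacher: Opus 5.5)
Your proof is correct, but it takes a different route from the paper. The paper argues in two lines. By Corollary~\ref{ch18:cor5}, $\hat\sigma_\rho$ extends to a bimeasure. By the Graham--Schreiber theorem, the Fourier transform of a bimeasure is a bounded continuous function. Hence $\sigma_\rho$ is represented by some $h\in C^b(G\times G)$, and Theorem~\ref{ch18:the10} gives the rest.

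You instead work with $\hat\rho$ itself. You treat $\hat\rho\colon C_0(\hat G)\to\mathcal H$ as a regular countably additive $\mathcal H$-valued measure, via Bartle--Dunford--Schwartz, since every bounded operator into a Hilbert space is weakly compact. You then define the process explicitly by $\rho_G(x)=\hat\rho(\chi_x)$ and verify $\rho(f)=\int_G f(x)\rho_G(x)\,dx$ weakly by Fubini. In effect you reprove, for the positive-definite bimeasure $\sigma_{\hat\rho}$, the Graham--Schreiber continuity result; your $h(x,y)=\langle\beta,\chi_x\otimes\bar\chi_y\rangle$ is, up to reflections, its Fourier transform.

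The two routes buy different things. The paper's route is shorter and stays inside the $S_0$/bimeasure framework, avoiding the vector-valued integration it explicitly wants to sidestep. Yours exhibits the stochastic process concretely and makes part (b) nearly tautological. There $\hat\rho$ extends to all of $C^b(\hat G)$ by integration, and $\rho(\delta_x)=\hat\rho(\chi_x)$ is just the definition of $\rho_G$ plus uniqueness; the paper leaves both points implicit.

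Two small repairs are needed.
\begin{itemize}
\item $G$ need not be first countable, so $x\to x_0$ and $k_\alpha\to 1$ are nets and sequential dominated convergence does not apply directly. Use uniform regularity of the vector measure instead: its semivariation is small off some compact subset of $\hat G$. Combine this with the locally uniform convergence $\chi_x\to\chi_{x_0}$ on $\hat G$.
\item Mind the reflections. The Fourier preimage of $\bar g$ is the reflection of $\overline{\check g}$, not $\overline{\check g}$ itself; this is the $g\checkm$ in Theorem~\ref{ch18:the4}.(a). Likewise $\int\rho_G\,d\mu=\hat\rho(h)$ holds for $h=\int\chi_x\,d\mu(x)=(\hat\mu)\checkm$, not for $h=\hat\mu$ (test with $\mu=\delta_x$).
\end{itemize}
Neither slip is load-bearing, because your final verification goes through $\rho(f)=\int f\rho_G\,dx$, which is correct.
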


\begin{proof}
Corollary~\ref{ch18:cor5} says: $\rho$ is V-bounded
if and only if  $\hat{\sigma }_{\rho}$ extends to a bimeasure.

Since the Fourier transform of a bimeasure is a bounded,
continuous function
(cf.\ \cite{grsc84}, Theorem~2.4(i), and Definition~2.1),
the corollary follows from
\hbox{Theorem}~\ref{ch18:the10}.\qed
\end{proof}

\begin{rem}
The formula $\rho (\delta _{x}) =
\hat{\rho}(\chi _{x})$ in Corollary~\ref{ch18:cor11}.(b) can be seen as an alternative
formulation of
the ``representation theorem of V-bounded stochastic  processes  as
Fourier transforms of  stochastic  measures.''  That  it  is  actually
equivalent to Niemi's formulation in \cite{ni75}, p.~35, will become
clear by Proposition~\ref{ch18:pro12}.
\end{rem}

The two preceding theorems show the strong relations between  GSPs
and stochastic processes.  Together  with  the  conditions  stated
above the concepts are even equivalent. From this point of view it
is possible to see aspects of the classical theory in a new  light
and to apply mathematical methods to stochastic processes in a new
way. This new point of view leads of course to  new  and  in  many
cases short and clear proofs of classical theorems.  The  theorems
stated in the previous chapter were proven for GSPs.  If we add the
presupposition that the covariance distribution $\sigma _{\rho}$ is
represented by a bounded, continuous function, all facts are proven for
stochastic processes as well, as it is obvious that the definitions of
certain properties for GSPs and stochastic processes are the same.  We
will use this considerations in the sequel to prove some results on
V-bounded and harmonizable stochastic processes in a new~way.

The following proposition compares GSPs and vector measures as
defined by Niemi (cf.  \cite{ni75} p.~15), that are (with respect to
the inductive limit topology) continuous and linear mappings $\mu :
\mathcal{C}_c(G) \mapsto \mathcal{H}$.  Since $\mathcal{C}_c(G)$ and ${S_{0}(G)}$ are
not related by inclusions no general comparison is possible, but the
following holds:

\setcounter{proposition}{11}
\begin{proposition}\label{ch18:pro12}
Under the assumption of boundedness or V-boundedness, the concepts of vector
measures and GSPs are equivalent.
\end{proposition}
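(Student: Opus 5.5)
The plan is to exhibit, in each of the two cases, a bijection between bounded (resp.\ V-bounded) GSPs and bounded (resp.\ V-bounded) vector measures $\mu:\mathcal{C}_c(G)\mapsto\mathcal{H}$. In both cases the common domain is the dense subspace $A_c(G)=\mathcal{C}_c(G)\cap S_0(G)$. This space is dense in $S_0(G)$. It is also dense in $\mathcal{C}_c(G)$ for the inductive limit topology: for fixed compact $K$, the functions in $A_c(G)$ supported in a slightly larger compact set approximate any $k\in\mathcal{C}_c(G)$ with $\operatorname{supp}k\subseteq K$ uniformly, e.g.\ by convolving with a compactly supported approximate unit lying in $A_c(G)$.

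\emph{Bounded case.} Let $\rho$ be a bounded GSP, so $\|\rho(f)\|_{\mathcal{H}}\le c\|f\|_\infty$ on $S_0(G)$. Since $S_0(G)$ is sup-norm dense in $C_0(G)$, $\rho$ extends uniquely to a bounded operator $C_0(G)\mapsto\mathcal{H}$. Its restriction to $\mathcal{C}_c(G)$ is continuous for the inductive limit topology, because that topology is finer than the sup-norm topology. Hence it is a vector measure in Niemi's sense.

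Conversely, take a vector measure $\mu$ that is bounded, i.e.\ $\|\mu(k)\|\le c\|k\|_\infty$ for $k\in\mathcal{C}_c(G)$. It extends to $C_0(G)$. Restricting to $S_0(G)\subset C_0(G)$ and using $\|f\|_\infty\le\|f\|_{A}\le C\|f\|_{S_0}$ gives a bounded GSP. The two constructions are mutually inverse because both maps are determined by their values on $A_c(G)$, which is dense in every topology involved.

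\emph{V-bounded case.} I will reduce this to the bounded case by Fourier transform. By Lemma~\ref{ch18:lem3}.(a), $\rho$ is V-bounded if and only if $\hat\rho$ is a bounded GSP on $\hat G$. By the bounded case, $\hat\rho$ therefore corresponds to a bounded vector measure on $\hat G$, i.e.\ an operator $C_0(\hat G)\mapsto\mathcal{H}$.

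It remains to match this with Niemi's notion of a V-bounded vector measure on $G$, namely one satisfying $\|\mu(k)\|\le c\|\hat k\|_\infty$ for $k\in \mathcal{C}_c(G)$. Such a $\mu$ is defined directly on $G$, and I would proceed as follows. Restrict $\mu$ to $A_c(G)$. The Fourier images of $A_c(G)$ lie in $S_0(\hat G)$ and are sup-norm dense in $C_0(\hat G)$. So $f\mapsto\mu(\check f)$ extends to a bounded map on $C_0(\hat G)$, and hence on $S_0(\hat G)$; that is, it is a bounded GSP $\tau$ on $\hat G$. Then $\rho:=\check\tau$, i.e.\ $\rho(g)=\tau(\hat g)$, is a V-bounded GSP that agrees with $\mu$ on $A_c(G)$.

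For the reverse direction, a V-bounded GSP $\rho$ gives a map $\rho\circ\mathcal{F}^{-1}$ that is sup-norm bounded on $\mathcal{F}(S_0(G))=S_0(\hat G)$. It therefore extends to $\{k:\hat k\in C_0(\hat G)\}\supseteq \mathcal{C}_c(G)\cap L^1(G)$. I will check inductive-limit continuity via $\|\hat k\|_\infty\le\|k\|_1\le |K|\,\|k\|_\infty$ for $\operatorname{supp}k\subseteq K$. Corollary~\ref{ch18:cor11} then identifies the resulting object with Niemi's representation $\rho(\delta_x)=\hat\rho(\chi_x)$.

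The main obstacle is the density and approximation step. I need that $A_c(G)$ is dense in $\mathcal{C}_c(G)$ in a way compatible with the $\|\hat{\cdot}\|_\infty$-seminorm on each $\mathcal{C}_K(G)$. This holds because, for $\operatorname{supp}k\subseteq K$, both $\|\hat k\|_\infty$ and $\|k\|_\infty$ are controlled by $\|k\|_\infty|K|$. That guarantees uniqueness of the extensions, and hence that the correspondence is a bijection.
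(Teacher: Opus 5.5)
Your proposal is correct and follows the paper's route. The paper extends a bounded GSP by sup-norm continuity to $C_0(G)$, and a V-bounded one by $\|\hat{\cdot}\|_\infty$-continuity to the Fourier preimage of $C_0(\hat G)$, then uses that $\mathcal{C}_c(G)$ and $S_0(G)$ are both dense there; you do the same, spelling out the common dense core $A_c(G)$ and the inductive-limit continuity, though your closing appeal to Corollary~\ref{ch18:cor11} is unnecessary (in the paper it is Proposition~\ref{ch18:pro12} that links Corollary~\ref{ch18:cor11} to Niemi's formulation, not the reverse).
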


\begin{proof}
Because of the definition of boundedness and V-boundedness, it is obvious that a
bounded GSP extends to a bounded linear mapping from $C_{0}(G)$.
V-bounded ones extend to bounded linear mappings on $\mathcal{F}(C_{0}(\hat{G}))$.  Since $\mathcal{C}_c(G)$ as well as ${S_{0}(G)}$ are
dense in these spaces, the equivalence of both concepts follows.  \qed
\end{proof}

\begin{rem}
The concept of stationary GSPs is more general than that of
stationary vector measures (cf.  \cite{ho89}).
\end{rem}


\section{Harmonizable Generalized Stochastic Processes}
\label{ch18:sec6}

For GSPs it is also possible to define generalizations of the
concept of stationarity.  The first of the two different concepts is the
concept of V-boundedness (cf.~Definition~\ref{ch18:def4}.(b)), which was first  introduced by Bochner (cf.  \cite{bo56} p.18).
Using Corollary \ref{ch18:cor11}.(a), it is obvious that any V-bounded GSP can be
identified with a V-bounded stochastic process.

\begin{rem}
According to
\cite{ra82-1}, p.315, Theorem~4.2, our definition of V-boundedness for GSPs
(with values in a Hilbert space) is equivalent to \textbf{weak
harmonizability} as defined for the first time in \cite{ro59}.
\end{rem}

The following definition corresponds to the
definition given in \cite{ni75} p.~35 for stochastic processes:

\begin{definition}
\label{ch18:def9}
$A$ GSP is called (\textbf{strongly}) \textbf{harmonizable} if
$\hat{\sigma}_{\rho}$ can be identified with a bounded measure
(i.e., is a functional which is continuous with respect to
the \hbox{sup-norm}).
\end{definition}

\begin{proposition}
\label{ch18:pro13}
For GSPs  $\rho $  with covariance ${\sigma_{ \rho}}$ one has:
\begin{enumerate}\leftskip5pt
\item[(a)] $\rho $ is harmonizable if and only if
${\sigma_{ \rho}}$ belongs to the Fourier-Stieltjes
algebra $B(G\times G) := \mathcal{F}(M(\hat{G}\times \hat{G}))$;
furthermore one has the following identity:
\[
{\sigma_{ \rho}} = h_{\rho}(x,y) = \langle \nu ,\bar{\chi}_{x}
\otimes \bar{\chi }_{y}\rangle  \ \quad
\forall \,  x,y \in G\text{ and }\nu
\in  M(\hat{G}\times \hat{G}).
\]

\item[(b)] If $\rho$ is harmonizable, then  $\rho$ is V-bounded.

\item[(c)] For stationary  GSP $\rho $ we have the following equivalence:
\[
\rho\ \mathrm{is \, harmonizable}\ \Longleftrightarrow\ \rho\ \mathrm{is \,  V-bounded}.
\]
\end{enumerate}
\end{proposition}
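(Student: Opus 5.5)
For part (a) I will use that the Fourier transform is a bijection between $S'_0(\hat G\times\hat G)$ and $S'_0(G\times G)$, and that by definition $B(G\times G)=\mathcal{F}(M(\hat G\times\hat G))$. If $\rho$ is harmonizable, then $\hat\sigma_\rho$ is continuous with respect to $\|\cdot\|_\infty$ on $S_0(\hat G\times\hat G)$. Since $S_0$ is dense in $C_0$, it extends to a bounded measure $\nu$. Inverting the Fourier transform at the distributional level gives $\sigma_\rho=\mathcal{F}^{-1}\nu$, up to the reflection conventions of Theorem~\ref{ch18:the4}.(a), where the second variable is reflected. Testing this against $f\otimes g$ and using Fubini shows that $\sigma_\rho$ is represented by the bounded continuous function
\[
h_\rho(x,y)=\int_{\hat G\times\hat G}\bar\chi_x\otimes\bar\chi_y\,d\nu=\langle\nu,\bar\chi_x\otimes\bar\chi_y\rangle .
\]
The pairing of $\nu$ with $\hat f\otimes\hat g$ rewrites as $\iint h_\rho f g$. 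The reflection in $y$ can be absorbed by replacing $\nu$ with its image under $(s,t)\mapsto(s,-t)$, which is again a bounded measure. Conversely, if $\sigma_\rho=\mathcal{F}^{-1}\nu$ with $\nu\in M(\hat G\times\hat G)$, then $\hat\sigma_\rho$ is this (reflected) measure and hence sup-norm continuous. I expect the main nuisance to be the bookkeeping of conjugations and reflections, not any conceptual difficulty.

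For part (b), a bounded measure on $\hat G\times\hat G$ is in particular a bimeasure. This holds because $\|F\|_\infty\le\|F\|_{V_0}$, so sup-norm continuity on $S_0(\hat G\times\hat G)$ implies $V_0$-continuity. Thus $\hat\sigma_\rho$ extends to a bimeasure, and Corollary~\ref{ch18:cor5} gives that $\rho$ is V-bounded.

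For part (c), the direction ($\Rightarrow$) is (b). For ($\Leftarrow$), I will use stationarity through Corollary~\ref{ch18:cor6}.(b) and Theorem~\ref{ch18:the4}.(a). These give $\langle\hat\sigma_\rho,f\otimes g\rangle=\langle\sigma_{\hat\rho},f\otimes\check g\rangle=\langle\tau_{\hat\rho},f\check g\rangle$, so $\hat\sigma_\rho$ is the image of the positive translation bounded measure $\tau_{\hat\rho}$ on the antidiagonal $\nabla\hat G$. It then suffices to show that $\tau_{\hat\rho}$ is a bounded measure; its image on $\nabla\hat G$ is then a bounded measure on $\hat G\times\hat G$, and $\rho$ is harmonizable. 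By Lemma~\ref{ch18:lem3}.(a), V-boundedness of $\rho$ means that $\hat\rho$ is bounded. Moreover $\hat\rho$ is orthogonally scattered by Lemma~\ref{ch18:lem3}.(b) and Corollary~\ref{ch18:cor6}.(a), since $\hat\rho$ is frequency stationary. Corollary~\ref{ch18:cor2} then yields directly that $\tau_{\hat\rho}=\mu_{\hat\rho}$ is a bounded measure. The only care needed is to check that the spectral measure of Corollary~\ref{ch18:cor6}.(b) coincides with the measure produced by Corollary~\ref{ch18:cor2}. This follows from the uniqueness of the representing functional given by Theorem~\ref{ch18:the1}.(c).
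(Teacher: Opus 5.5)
Your proposal is correct and follows the paper's proof essentially step by step: (a) is the Fourier correspondence between $M(\hat G\times\hat G)$ and $B(G\times G)$, (b) uses that a bounded measure is a bimeasure together with Corollary~\ref{ch18:cor5}, and (c) applies Lemma~\ref{ch18:lem3}, Corollary~\ref{ch18:cor6}.(a) and Corollary~\ref{ch18:cor2} to $\hat\rho$. You are a bit more explicit than the paper, notably in (c), where you use Theorem~\ref{ch18:the4}.(a) to turn the bounded measure on the diagonal for $\sigma_{\hat\rho}$ into a bounded measure on the antidiagonal for $\hat\sigma_\rho$; the one slip is in (a), where no one-variable reflection from Theorem~\ref{ch18:the4}.(a) enters (only the $\chi_x$ versus $\bar\chi_x$ convention, i.e.\ $(s,t)\mapsto(-s,-t)$), and this is harmless since any such reflection of $\nu$ stays in $M(\hat G\times\hat G)$.
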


\begin{proof}\
\begin{enumerate}\leftskip5pt
\item[(a)] $h_{\rho}$ is an element of the Fourier-Stieltjes
algebra   if and only if $\hat{\sigma }_{\rho}$ is a bounded measure on $\hat{G}\times
\hat{G} \Leftrightarrow \rho $ is  harmonizable.

\item[(b)] $\rho$ 
harmonizable $\Rightarrow \hat{\sigma}_{\rho}$ extends to a bounded
measure (that is a bounded mapping on $C_{0}(G\times G) ) \Rightarrow
\hat{\sigma}_{\rho}$ defines a  bimeasure  which
implies by Corollary~\ref{ch18:cor5} that $\rho $ is V-bounded.

\item[(c)] $(\Rightarrow )$ This has been shown already in part b).

$(\Leftarrow )$ Let $\rho $ be a stationary V-bounded GSP. It
follows that $\hat{\rho}$ is bounded and orthogonally scattered.
Corollary~\ref{ch18:cor2} shows that there exists a bounded measure $\mu_{\hat{\rho}}$ on $\hat{G}$ with
\[
\langle \sigma_{\hat{\rho}},f\otimes g\rangle = \langle \mu_{\hat{\rho}},fg\rangle.
\]
Therefore we can identify $\mu_{\hat{\rho}}$
(as a measure on the diagonal
$\varDelta (\hat{G})$) with $\sigma_{\hat{\rho}}$,
which is therefore a bounded measure .  Thus $\rho $ is
harmonizable. \qed
\end{enumerate}
\end{proof}

\begin{rem}
As the Fourier-Stieltjes algebra is a subset of $C^{b}(G)$, Proposition~\ref{ch18:pro13}.(a) together with Theorem~\ref{ch18:the10} shows that any harmonizable GSP can be
identified with a (strongly) harmonizable stochastic process; the
converse is trivial.
\end{rem}

Since there are stationary
GSPs having a covariance distribution which cannot be identified with a
continuous function, it is obvious that stationary GSPs need not be
harmonizable or V-bounded. But if we add the continuity of
${\sigma_{ \rho}}$, we get:
\begin{proposition}\label{ch18:pro14}
Let $\rho $ be a GSP with covariance ${\sigma_{ \rho}}$:
\[
\rho \text{ is  stationary}, {\sigma_{ \rho}} \in C^{b}(G\times G)
\Longrightarrow \rho \, \, \text{is  harmonizable}.
\]
\end{proposition}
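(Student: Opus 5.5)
By Proposition~\ref{ch18:pro13}.(c), for a stationary GSP harmonizability is equivalent to V-boundedness. The plan is therefore to show that a stationary $\rho$ whose covariance is represented by some $h \in C^{b}(G\times G)$ is V-bounded. Equivalently, by Corollary~\ref{ch18:cor2}, Lemma~\ref{ch18:lem3}.(a) and Corollary~\ref{ch18:cor6}, we show that the spectral measure $\tau_{\hat\rho}$ is a \emph{bounded} positive measure on $\hat G$. Once $\tau_{\hat\rho}\in M(\hat G)$, Corollary~\ref{ch18:cor2} gives that $\hat\rho$ is bounded. Then $\rho$ is V-bounded, and Proposition~\ref{ch18:pro13}.(c) yields harmonizability.

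First we use stationarity. By Theorem~\ref{ch18:the1}.(a), $\sigma_\rho$ is diagonally invariant, so $h(x+z,y+z)=h(x,y)$ for all $x,y,z$. Hence $h(x,y)=k(x-y)$ with $k(u):=h(u,0)\in C^{b}(G)$. Theorem~\ref{ch18:thm7} says $\sigma_\rho$ is positive definite. Then for $f\in S_0(G)$ we have
\[
0\le\langle\sigma_\rho,f\otimes\bar f\rangle=\int_G\int_G k(x-y)f(x)\overline{f(y)}\,dx\,dy ,
\]
so $k$ is a continuous, bounded function of positive type. By density of $S_0(G)$ in $L^1(G)$ this inequality extends to all $f\in L^1(G)$.

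Next we apply Bochner's theorem on the LCA group $G$. It gives a positive measure $\nu\in M(\hat G)$ with $k(u)=\int_{\hat G}\chi(u)\,d\nu(\chi)$ (or the conjugate, depending on conventions), and $\|\nu\|_M=k(0)<\infty$. We then compute, for $f,g\in S_0(\hat G)$,
\[
\langle\sigma_{\hat\rho},f\otimes g\rangle=\langle\hat\sigma_\rho,f\otimes g\checkm\rangle=\langle\sigma_\rho,\hat f\otimes(g\checkm)\hat{\;}\rangle ,
\]
using Theorem~\ref{ch18:the4}.(a). Inserting $h(x,y)=k(x-y)$ and the Bochner representation, then applying Fubini, the right-hand side becomes $\int_{\hat G} f g\,d\nu$, up to a reflection $\chi\mapsto\bar\chi$ that we fix by tracking conventions. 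By the uniqueness part of Theorem~\ref{ch18:the1}.(c) (spectral synthesis of the diagonal), $\tau_{\hat\rho}=\nu$ or its reflection. In either case it is a bounded measure.

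The main obstacle is the bookkeeping in this last identification. We must handle the Fourier conventions (signs and reflections $\checkm$) and justify Fubini. Fubini is unproblematic because $\hat f,\hat g\in L^1(G)$, $k$ is bounded and $\nu$ is finite. We first verify the identity on elementary tensors and then invoke uniqueness.

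An alternative that avoids the explicit Fubini computation is a direct estimate. Take a bounded approximate unit $e_\alpha=|\phi_\alpha|^2$ in $S_0(\hat G)$ with $\hat\phi_\alpha$ behaving like a Dirac net on $G$. Then $\langle\tau_{\hat\rho},|\phi|^2\rangle=\|\hat\rho(\phi)\|^2=\iint k(x-y)\hat\phi(x)\overline{\hat\phi(y)}\,dx\,dy\le\|k\|_\infty\|\hat\phi\|_1^2$. Choosing $\phi$ with $|\phi|\le1$ and $|\phi|^2$ increasing to $1$ while $\|\hat\phi\|_1$ stays bounded is not generally possible, so we use the Bochner route as the primary plan.
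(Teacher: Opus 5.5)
Your argument is correct, but it is organized differently from the paper's. The paper does not reduce to one variable. It uses Theorem~\ref{ch18:thm7} to see that the continuous function $h$ representing $\sigma_\rho$ is positive definite \emph{on the product group} $G\times G$. It then applies Bochner's theorem on $G\times G$ to get $h\in B(G\times G)$ and concludes with Proposition~\ref{ch18:pro13}.(a), so no explicit spectral measure and no Fourier bookkeeping appear. You instead use diagonal invariance to write $h(x,y)=k(x-y)$. You get positive type of $k$ directly from $\langle\sigma_\rho,f\otimes\bar f\rangle=\|\rho(f)\|_{\mathcal H}^2\ge 0$, so Theorem~\ref{ch18:thm7} is not really needed there. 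You then apply Bochner on $G$ itself and identify the spectral measure of Corollary~\ref{ch18:cor6}.(b) with the Bochner measure $\nu$. Your bookkeeping closes up cleanly. With $\hat f(x)=\int_{\hat G} f(\chi)\overline{\chi(x)}\,d\chi$ and $k(u)=\int_{\hat G}\chi(u)\,d\nu(\chi)$, Fubini and Fourier inversion give exactly $\langle\sigma_{\hat\rho},f\otimes g\rangle=\int_{\hat G} fg\,d\nu$, with no reflection. The paper's route is shorter. Yours is more informative: it exhibits the power spectrum explicitly and shows $\|\tau_{\hat\rho}\|_M=h(0,0)$, the classical statement that a continuous stationary process has a finite spectral measure.

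Some simplifications are available.
\begin{itemize}
\item Identifying $\tau_{\hat\rho}$ with $\nu$ needs no spectral synthesis: the products $fg$ with $f,g\in S_0(\hat G)$ contain all compactly supported elements of $S_0(\hat G)$, which form a dense subspace.
\item You can skip that identification entirely, since $\|\hat\rho(f)\|_{\mathcal H}^2=\int_{\hat G}|f|^2\,d\nu\le h(0,0)\|f\|_\infty^2$ already shows that $\hat\rho$ is bounded.
\item Instead of passing through V-boundedness and Proposition~\ref{ch18:pro13}.(c), note directly that $\sigma_{\hat\rho}$ is a bounded measure. By Theorem~\ref{ch18:the4}.(a) so is $\hat\sigma_\rho$, which is Definition~\ref{ch18:def9}.
\end{itemize}

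Finally, the alternative you discarded does work. For every compact $K\subseteq\hat G$ there is a compactly supported $\phi\in A(\hat G)\subseteq S_0(\hat G)$ with $0\le\phi\le 1$, $\phi=1$ on $K$, and $\|\phi\|_{A(\hat G)}\le C$, with $C$ independent of $K$. Such $\phi$ are F{\o}lner / de la Vall\'ee-Poussin type functions; compare the bounded approximate unit $(u_\beta)$ used in Theorem~\ref{ch18:the17}. Your estimate then gives $\tau_{\hat\rho}(K)\le C^2\|h\|_\infty$ uniformly in $K$, so the spectral measure is finite without invoking Bochner.
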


\begin{proof}
$\rho $ stationary and ${\sigma_{ \rho}} \in
C^{b}(G)$ imply [cf. Theorem~\ref{ch18:thm7}] that ${\sigma_{ \rho}}$ can be identified
with a continuous positive definite function. It follows by Bochner's
theorem (cf.  \cite{ru62} p.19) that ${\sigma_{ \rho}}$ is in the
Fourier-Stieltjes algebra $B(G\times G) := \mathcal{F}(M(\hat{G}\times
\hat{G}))$.\qed
\end{proof}

\setcounter{corollary}{14}
\begin{corollary}\label{ch18:cor15}
For a continuous stochastic process $X$
one has the following chain of implications:
\[
X \; \text{is   stationary }\Longrightarrow  X\; \text{is\; harmonizable }
\Longrightarrow  X \; \text{is  V-bounded. }
\]
\end{corollary}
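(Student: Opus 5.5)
The plan is to transfer the chain of implications from GSPs to the continuous process $X$ via Theorem~\ref{ch18:the10}, and then invoke Proposition~\ref{ch18:pro14} and Proposition~\ref{ch18:pro13}.(b).

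\emph{Step 1: pass from $X$ to a GSP.} I read ``continuous stochastic process'' as a mean-square continuous process $X: G\to\mathcal{H}$. For the lifting to make sense I also need $X$ bounded. In the stationary case this is automatic: $\|X(x)\|_{\mathcal{H}}^2=(X(x)|X(x))=h(x,x)=h(0,0)$ is constant. For the second implication, boundedness is part of what is needed to speak of $X$ as a harmonizable process at all: the covariance lies in $B(G\times G)\subset C^b(G\times G)$, so $h(x,x)$ is bounded.

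By Theorem~\ref{ch18:the10}.(d), $X$ lifts to $\tilde X: M(G)\to\mathcal{H}$. Its restriction $\rho:=\tilde X|_{S_0(G)}$ is a GSP whose covariance distribution $\sigma_\rho$ is represented by $h(x,y)=(X(x)|X(y))\in C^b(G\times G)$. By the remark following Theorem~\ref{ch18:the10}, this identification is bijective. Properties of $X$ and of $\rho$ therefore correspond, and stationarity, harmonizability and V-boundedness of $X$ are by definition those of $\rho$, as the paper stresses after Corollary~\ref{ch18:cor11}.

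\emph{Step 2: first implication.} I need to check that stationarity of $X$, meaning $h(x+z,y+z)=h(x,y)$, gives stationarity of $\rho$ in the sense of Definition~\ref{ch18:def3}.(a). This follows by substituting in
\[
(\rho(T_zf)|\rho(T_z g))=\int\!\!\int h(x,y)f(x-z)\overline{g(y-z)}\,dx\,dy
\]
and using translation invariance of Haar measure; equivalently, by Theorem~\ref{ch18:the1}.(a), since $\sigma_\rho$ is diagonally invariant. Then $\rho$ is stationary with $\sigma_\rho\in C^b(G\times G)$, so Proposition~\ref{ch18:pro14} gives that $\rho$ is harmonizable, hence $X$ is harmonizable.

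\emph{Step 3: second implication.} If $X$, and hence $\rho$, is harmonizable, Proposition~\ref{ch18:pro13}.(b) gives that $\rho$ is V-bounded. Corollary~\ref{ch18:cor11}.(a) then identifies $\rho$ with a uniquely determined V-bounded stochastic process, which by the uniqueness in Theorem~\ref{ch18:the10} is $X$ itself. So $X$ is V-bounded.

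The main obstacle is bookkeeping rather than analysis: making sure the classical notions for $X$ coincide with the GSP notions for $\rho$. Stationarity needs the short Haar-measure computation in Step 2. Harmonizability and V-boundedness are defined through $\sigma_\rho$, respectively through continuity of $\rho$ in $\|\hat f\|_\infty$; these match Niemi's classical definitions via Proposition~\ref{ch18:pro12} and Proposition~\ref{ch18:pro13}.(a).
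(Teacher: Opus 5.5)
Your proposal is correct and follows the paper's own argument. You identify $X$ with a GSP $\rho$ via Theorem~\ref{ch18:the10}.(d), whose covariance is represented by $h\in C^b(G\times G)$, then apply Proposition~\ref{ch18:pro14} for the first implication and Proposition~\ref{ch18:pro13}.(b) for the second. Your extra bookkeeping, namely boundedness of $X$ from $\|X(x)\|_{\mathcal{H}}^2=h(0,0)$ and the Haar-measure check that stationarity transfers to $\rho$, only makes explicit what the paper compresses into ``recalling that any stationary continuous stochastic process can be identified with a stationary GSP with $\sigma_\rho\in C^b(G\times G)$.''
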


\begin{proof}
This claim follows from
Propositions~\ref{ch18:pro13}.(b) and~\ref{ch18:pro14}, recalling that any stationary
continuous stochastic process can be identified
with a stationary GSP with ${\sigma_{ \rho}} \in C^{b}(G\times G).$
\qed
\end{proof}

The following theorem (for stochastic processes it was first
proved in \cite{ni75}) states that any V-bounded GSP can be
approximated by harmonizable ones.  This is of interest as there are
GSPs which are V-bounded but not harmonizable.  For the proof we need
linear smoothing operators for GSPs which are defined in the same way as for distributions.

\begin{definition}\label{ch18:def10}
For a   GSP, $\rho$ on G  and $ f \in {S_{0}(G)},$ one defines:
\begin{enumerate}\leftskip5pt
\item[(a)] $h\rho (f) := \rho (hf) $ , for
 $h \in A(G) := \mathcal{F}\bigl( L^{1}(\hat{G})\bigr)$;

\item[(b)] $k\ast {}\rho (f) := \rho (\check{k}\ast {}f) $ for
$ k \in L^{1}(G)$.
\end{enumerate}
\end{definition}

\begin{rem}
Since $ f \mapsto hf $ and $ f \mapsto
k\ast f $ define linear and bounded operaters on $ {S_{0}(G)} $, it follows
that $h\rho $ and $k\ast {}\rho $ define GSPs.
\end{rem}

\setcounter{lemma}{15}
\begin{lemma}\label{ch18:lem16}
Let $\rho $ be a GSP on $G$, $k \in
L^{1}(G), \, h \in A(G)$. Then the following equations hold true:
\begin{enumerate}\leftskip5pt
\item[(a)] $(k\ast {}\rho ) \hat{\;} = \hat{k}\hat{\rho}$;

\item[(b)] $\sigma_{h\rho} = (h\otimes \bar{h})\sigma _{\rho} $;

\item[(c)] $\sigma_{k\ast {}\rho } = (k\otimes \bar{k})\ast {}{\sigma_{ \rho}}$.
\end{enumerate}
\end{lemma}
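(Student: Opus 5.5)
All three identities will be checked first on elementary tensors $f\otimes g$, with $f,g\in S_0(G)$ (resp.\ $S_0(\hat G)$). They then extend to all of $S_0(G\times G)$ by linearity and continuity, using the tensor product property $S_0(G\times G)=S_0(G)\hat\otimes S_0(G)$, exactly as in the extension argument after Definition~\ref{ch18:def6}. Part (a) is a statement about GSPs rather than covariances, so it will be checked directly on test functions.

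For (a), take $f\in S_0(\hat G)$. The definitions give
\[
(k\ast\rho)\hat{\;}(f)=(k\ast\rho)(\hat f)=\rho(\check k\ast\hat f),
\]
where $\check k$ denotes the flip $k\checkm$, and
\[
\hat k\hat\rho(f)=\hat\rho(\hat k f)=\rho(\widehat{\hat k f}).
\]
It therefore suffices to prove the test-function identity $\widehat{\hat k f}=k\checkm\ast\hat f$ on $G$. This is the convolution theorem combined with Fourier inversion on $S_0$. Since $\hat k\in A(\hat G)$ and $S_0(\hat G)$ is an $A(\hat G)$-module, both sides lie in $S_0(G)$. To verify it, I first take $f=\hat\varphi$ with $\varphi\in S_0$, and then use inversion (the Fourier transform being an isomorphism of $S_0$, as noted after Definition~\ref{ch18:def8}) to reduce to $\widehat{\hat k\,\hat u}=\widehat{\widehat{k\ast u}}=(k\ast u)\checkm=k\checkm\ast u\checkm$.

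This is the only step where care is needed: the sign conventions. One has to match $\hat{\hat u}=u\checkm$ with the flip appearing in the definition $k\ast\rho(f)=\rho(k\checkm\ast f)$, and the latter is precisely what makes the identity come out right. I expect this bookkeeping, rather than any analytic point, to be the main obstacle.

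For (b), compute
\[
\langle\sigma_{h\rho},f\otimes g\rangle=(h\rho(f)\,|\,h\rho(\bar g))=(\rho(hf)\,|\,\rho(h\bar g)).
\]
Writing $h\bar g=\overline{\bar h g}$, this equals $\langle\sigma_\rho,hf\otimes\bar h g\rangle=\langle(h\otimes\bar h)\sigma_\rho,f\otimes g\rangle$. The last step uses the definition of multiplication of a distribution by a function in $A(G\times G)$, which contains $h\otimes\bar h$.

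For (c), similarly,
\[
\langle\sigma_{k\ast\rho},f\otimes g\rangle=(\rho(k\checkm\ast f)\,|\,\rho(k\checkm\ast\bar g)).
\]
Since $k\checkm\ast\bar g=\overline{\bar k\checkm\ast g}$, this is $\langle\sigma_\rho,(k\checkm\ast f)\otimes(\bar k\checkm\ast g)\rangle$. Now $(k\checkm\ast f)\otimes(\bar k\checkm\ast g)=(k\otimes\bar k)\checkm\ast(f\otimes g)$ by Fubini on $G\times G$, so the expression equals $\langle(k\otimes\bar k)\ast\sigma_\rho,f\otimes g\rangle$ by the definition of convolution of a distribution with an $L^1(G\times G)$ function.

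Finally, both sides of (b) and (c) are continuous functionals on $S_0(G\times G)$, and they agree on the elementary tensors, whose finite sums are dense. Hence they coincide.
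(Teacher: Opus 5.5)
Your proposal is correct and is the routine verification from the definitions that the paper leaves to the reader. For (b) and (c) you check the identity on elementary tensors $f\otimes g$ and extend by density via $S_0(G\times G)=S_0(G)\hat\otimes S_0(G)$. Part (a) reduces to the test-function identity $\widehat{\hat k f}=k\checkm\ast\hat f$. Writing $f=\hat u$ with $u\in S_0(G)$ is the right way to prove this without needing $\hat{\hat k}$ for a general $k\in L^1(G)$, and it confirms that the flip in Definition~\ref{ch18:def10}(b) is what makes the signs match.
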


\begin{proof}
The easy calculations are left to the reader.
\end{proof}

\setcounter{theorem}{16}
\begin{theorem}\label{ch18:the17}
For any V-bounded GSP $ \rho $, there exists a net
$\bigl( \rho _{\eta }\bigr) _{\eta \in E}$ of harmonizable GSPs such
that the corresponding autocovariance functions converge
uniformly on compact sets, i.e.,   
\[
  \sigma_{\rho_{\eta }}(x,y) \rightarrow {\sigma_{ \rho}}(x,y)
 \quad  \text{ for } \eta \rightarrow \infty.
\]

\end{theorem}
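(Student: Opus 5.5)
The plan is to smooth the spectral side: cut the spectral process off with compactly supported Fourier-algebra functions, which turns the bimeasure $\hat{\sigma}_\rho$ into a bounded measure. Let $(h_\eta)_{\eta\in E}$ be a bounded approximate unit in $A_c(\hat G)$, chosen with $\|h_\eta\|_\infty\le C$ and $h_\eta\to 1$ uniformly on compact subsets of $\hat G$. For $G=\mathbb{R}^d$ one can take dilated Fejér-type kernels with $\hat k_\eta = h_\eta$, where $k_\eta\in L^1(G)$ is bounded in $L^1$. Define $\rho_\eta := k_\eta\ast\rho$. Definition~\ref{ch18:def10} and the subsequent remark show that $\rho_\eta$ is a GSP. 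By Lemma~\ref{ch18:lem16}(a), $\hat\rho_\eta = h_\eta\hat\rho$, and by Lemma~\ref{ch18:lem16}(b), $\sigma_{\hat\rho_\eta} = (h_\eta\otimes\bar h_\eta)\sigma_{\hat\rho}$. Through Theorem~\ref{ch18:the4}(a) this transfers to $\hat\sigma_{\rho_\eta}=(h_\eta\otimes \check{\bar h}_\eta)\hat\sigma_\rho$, with the flip in the second variable.

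\textbf{Step 1: each $\rho_\eta$ is harmonizable.} By Corollary~\ref{ch18:cor5}, $\hat\sigma_\rho$ extends to a bimeasure $B$ on $\hat G\times\hat G$. Then $\hat\sigma_{\rho_\eta}$ is a bimeasure multiplied by $u_\eta\otimes v_\eta$ with $u_\eta,v_\eta\in A_c(\hat G)$. I will show that such a product is a bounded measure. Fix $\psi\in A_c(\hat G)$ equal to $1$ on the supports of $u_\eta,v_\eta$. For $F\in S_0(\hat G\times\hat G)$, the function $(u_\eta\otimes v_\eta)F$ is supported in the compact set $K\times K$. There I will estimate $\|(u_\eta\otimes v_\eta)F\|_{V_0}\le C_\eta\|F\|_\infty$, using that on a compact set a function in $A(K\times K)\subset A(K)\hat\otimes A(K)$ has projective norm controlled by $\|u_\eta\otimes v_\eta\|_A$ times the sup norm of $F$. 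This is the main obstacle: multiplication by an element of $A_c\otimes A_c$ does not obviously map $\|\cdot\|_\infty$-bounded sets into $V_0$-bounded sets. A cleaner route avoids it by working with the covariance function directly.

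\textbf{Alternative for Step 1.} Since $\rho$ is V-bounded, Corollary~\ref{ch18:cor11} gives a continuous bounded covariance $\sigma_\rho(x,y)$, namely the Fourier transform of the bimeasure $B$ (Graham--Schreiber). Then $\sigma_{\rho_\eta}=(k_\eta\otimes\bar k_\eta)\ast\sigma_\rho$ by Lemma~\ref{ch18:lem16}(c). Its Fourier transform is $(h_\eta\otimes \bar h_\eta\check{})\,B$, and by \cite{grsc84} a bimeasure multiplied by a tensor of two functions in $A_c$ is a bounded measure. This is the fact I would cite, or prove by approximating with finite sums of characters. Proposition~\ref{ch18:pro13}(a) then gives harmonizability.

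\textbf{Step 2: convergence.} Since $\sigma_\rho\in C^b(G\times G)$ and $(k_\eta\otimes \bar k_\eta)$ is a bounded approximate unit for $L^1(G\times G)$, the convolutions $(k_\eta\otimes\bar k_\eta)\ast\sigma_\rho$ converge to $\sigma_\rho$ uniformly on compact sets. This holds because a bounded function that is continuous, and hence uniformly continuous on compacta, is convolved with a tight, $L^1$-bounded approximate identity. The tail contributions are controlled by $\|\sigma_\rho\|_\infty$ times the tail mass of $k_\eta$, and the local part by the modulus of continuity. This is routine; the real work is the measure property in Step 1.
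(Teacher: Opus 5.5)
\textbf{There is a genuine gap in Step~1.} Your Step~2 is fine, but Step~1 rests on a false claim. As a result, a pure frequency cut-off $\rho_\eta:=k_\eta\ast\rho$ cannot give harmonizable processes in general.

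Multiplying a bimeasure $B$ by $u\otimes v$ with $u,v\in A_c(\hat G)$ does \emph{not} produce a bounded measure. The failure of a bimeasure to be a measure is a local phenomenon: unbounded variation over ever finer partitions inside a compact set. A compactly supported cut-off that does not vanish there cannot remove it.

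The cleanest counterexample is a discrete group, say $G=\mathbb{Z}$.
\begin{itemize}
\item Then $\hat G=\mathbb{T}$ is compact, and your $h_\eta\to 1$ uniformly on all of $\hat G$.
\item So for large $\eta$ the multiplier $h_\eta\otimes\check{\bar h}_\eta$ in your formula has no zeros, and its reciprocal is bounded and continuous.
\item If $(h_\eta\otimes\check{\bar h}_\eta)\hat\sigma_\rho$ were a bounded measure, multiplying back by that reciprocal would show that $\hat\sigma_\rho$ is one.
\item Hence, for a V-bounded but non-harmonizable $\rho$ (the paper notes such GSPs exist), none of your $\rho_\eta$ with large $\eta$ is harmonizable. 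With $h_\eta\equiv 1$ you simply get $\rho_\eta=\rho$.
\end{itemize}
The same happens on $\mathbb{R}$ for a positive definite non-measure bimeasure living on $[0,1]^2$, once $|h_\eta|\ge 1/2$ there. There is no result in \cite{grsc84} giving what you want. Your first attempt fails for the same reason: $\|F\|_\infty$ controls neither an $A$-norm nor a $V_0$-norm.

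\textbf{What is missing is a second regularization, in the time domain.} The paper sets $\rho_\eta:=u_\beta(e_\alpha\ast\rho)$ with $\eta=(\alpha,\beta)$. Here:
\begin{itemize}
\item $(e_\alpha)\subset S_0(G)$ is a tight, $L^1$-bounded approximate unit for $L^1(G)$; this is your frequency localization.
\item $(u_\beta)\subset S_0(G)$ is a bounded approximate unit for $A(G)$. This time localization smooths the spectral covariance.
\end{itemize}
By Lemma~\ref{ch18:lem16},
$\sigma_{\rho_\eta}=(u_\beta\otimes\overline{u_\beta})\bigl[(e_\alpha\otimes\overline{e_\alpha})\ast\sigma_\rho\bigr]$.
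This lies in $S_0(G\times G)$: convolving the bounded continuous $\sigma_\rho$ with an $S_0$-function gives a function that is uniformly locally in $A$, and multiplying by an $S_0$-function then lands in $S_0$. Hence $\hat\sigma_{\rho_\eta}\in S_0(\hat G\times\hat G)\subset M(\hat G\times\hat G)$, and $\rho_\eta$ is harmonizable directly by definition. No bimeasure-times-function lemma is needed.

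For the convergence, add one estimate to your Step~2. Put $v_\beta=u_\beta\otimes\overline{u_\beta}$ and $F=(e_\alpha\otimes\overline{e_\alpha})\ast\sigma_\rho$. Then
$\|v_\beta F-F\|_{K,\infty}\le\|1-v_\beta\|_{K,\infty}\,\|F\|_\infty$.
This is small uniformly in $\alpha$, because $u_\beta\to 1$ uniformly on compacta and $\|F\|_\infty\le\|e_\alpha\|_1^2\,\|\sigma_\rho\|_\infty$.
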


\begin{proof}
Let $(e_{\alpha })_{\alpha \in I}$ be a net in
${S_{0}(G)}$ constituting a tight, $L^{1}$-bounded approximate
unit for $L^{1}(G)$, and let $(u_{\beta })_{\beta \in J}$
be a bounded approximate unit for the Fourier algebra
$A(G)$ in ${S_{0}(G)}$.  Then we
set $\rho_{\eta } := u_{\beta }(e_{\alpha }\ast {}\rho), \, \eta :=
(\alpha ,\beta ) \in E := I\times J$.
According to Lemma~\ref{ch18:lem16}.b and~\ref{ch18:lem16}.c, we have
\[
\sigma _{\rho_{\eta }} = (u_{\beta }\otimes \bar{u}_{\beta
})[(e_{\alpha }\otimes \bar{e}_{\alpha })\ast {\sigma_{ \rho}} ].
\]
Furthermore $d_{\alpha } := e_{\alpha }\otimes
\bar{e}_{\alpha }  $  is  a  tight, $L^{1}$-bounded
approximate unit for $L^{1}(G\times G)$, and $u_{\beta }\otimes
\bar{u}_{\beta } := v_{\beta }$ is a bounded approximate unit in
$A(G\times G)$.

Let $K$ be a given subset of $G\times G$.  We want to show that $\sigma _{\rho_{\eta }}$ converges uniformly on
$K$.  Writing $\| f\| _{K,\infty } := \sup _{x\in K} |f(x)|$, we have
to verify that for any $\epsilon > 0$
\[
\exists \: \eta _{0} \text{ such that } \| \sigma_{\rho_{\eta}} -
{\sigma_{ \rho}}\| _{K,\infty } \le \epsilon \quad
\forall \eta \ge \eta_{0}.
\]
We can use the following estimate:
\begin{align*}
\| \sigma_{\rho_{\eta}} - {\sigma_{ \rho}}\| _{K,\infty } &
= \| v_{\beta }(d_{\alpha }\ast {}{\sigma_{ \rho}}) -
{\sigma_{ \rho}}\|_{K,\infty}\\
& \le \| v_{\beta }(d_{\alpha }\ast {}{\sigma_{ \rho}}) -
d_{\alpha }\ast
{}{\sigma_{ \rho}}\| _{K,\infty } +\| d_{\alpha }\ast {}{\sigma_{ \rho}} -
{\sigma_{ \rho}}\| _{K,\infty } .
\end{align*}
It is not difficult to see that the second term of this
estimate tends to zero for $\alpha \rightarrow \infty $ and that the
first term converges to zero for $\beta \rightarrow
\infty $ for arbitrary fixed $\alpha $ , which proves the uniform
convergence of $\sigma_{\rho_{\eta}}$ over K.  On the other hand, we have $\sigma_{\rho_{\eta}} \in
S_{0}\ast {}S_{0}(G\times G) \subseteq S_{0}(G\times G)$ and this
implies that $\hat{\sigma }_{\rho _{\eta }} \in S_{0}(\hat{G}\times
\hat{G}) \subseteq M(\hat{G}\times \hat{G})$, showing that $\rho _{\eta
}$ is harmonizable $\forall \, \eta \in $ E.\qed
\end{proof}

\setcounter{corollary}{17}
\begin{corollary}\label{ch18:cor18}
Any continuous V-bounded stochastic process
$X$ can be approximated by harmonizable processes uniformly
over compact sets.
\end{corollary}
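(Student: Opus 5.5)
The plan is to reduce the statement to Theorem~\ref{ch18:the17} by way of the identification between continuous bounded stochastic processes and GSPs from Theorem~\ref{ch18:the10}. Let $X$ be a continuous V-bounded stochastic process with values in $\mathcal{H}$. The first point to settle is that $X$ is bounded, since Theorem~\ref{ch18:the10}.(d) needs this. I would get it from V-boundedness itself: in the classical sense V-boundedness means that $\|\int f(x)X(x)\,dx\|_{\mathcal{H}}\le c\|\hat f\|_\infty$. Applying this to a Dirac net $T_x f_\alpha$ gives $\|X(x)\|\le c$, because $\|\widehat{T_xf_\alpha}\|_\infty \le \|f_\alpha\|_1$ stays bounded and $\int T_xf_\alpha X\,dx \to X(x)$ by continuity of $X$. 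With boundedness in hand, Theorem~\ref{ch18:the10}.(d) lifts $X$ to $\tilde X : M(G)\to\mathcal{H}$. Its restriction $\rho:=\tilde X|_{S_0}$ is then a GSP, and its covariance $\sigma_\rho$ is represented by $h(x,y)=(X(x)|X(y))\in C^b(G\times G)$.

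Next I would check that $\rho$ is V-bounded in the sense of Definition~\ref{ch18:def4}.(b). This is immediate, because $\rho(f)=\int f(x)X(x)\,dx$ and the defining inequality is the same one as in the classical definition; the compatibility of the two terminologies is exactly the remark made after Theorem~\ref{ch18:the10}. Theorem~\ref{ch18:the17} then supplies harmonizable GSPs $\rho_\eta = u_\beta(e_\alpha\ast\rho)$ whose covariance functions converge to $h$ uniformly on compact subsets of $G\times G$.

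Finally I would pass back to processes. Each $\sigma_{\rho_\eta}$ lies in $S_0(G\times G)\subset C^b(G\times G)$, so Theorem~\ref{ch18:the10}.(b),(c) yields a bounded continuous stochastic process $X_\eta(x)=\tilde\rho_\eta(\delta_x)$ with covariance function $\sigma_{\rho_\eta}(x,y)$. The remark following Proposition~\ref{ch18:pro13} shows that each $X_\eta$ is a harmonizable process. Approximation is then understood as uniform convergence of covariance functions on compacta, which is what Theorem~\ref{ch18:the17} gives.

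For a stronger form, $X_\eta(x)\to X(x)$ in $\mathcal{H}$ uniformly on compacta, I would use that $X_\eta(x)=\rho(e_\alpha^{\checkmark}\ast(u_\beta\delta_x)) = u_\beta(x)\,\tilde\rho(T_x e_\alpha^{\checkmark})$. Then
\[
\|X_\eta(x)-X(x)\|^2 = |u_\beta(x)|^2 \|\tilde\rho(T_xe^{\checkmark}_\alpha)\|^2 - 2\,\mathrm{Re}\bigl(u_\beta(x)(\tilde\rho(T_xe^{\checkmark}_\alpha)|X(x))\bigr) + h(x,x),
\]
and each term is an integral of $h$ against a tight Dirac net, so it converges uniformly on compacta. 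The main obstacle is the first step, which requires showing that V-boundedness forces boundedness of $X$ and that the classical and GSP notions of V-boundedness agree. This is where the Dirac-net argument together with the continuity of $X$ must be handled carefully.
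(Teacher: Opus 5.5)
Your proposal is correct and follows essentially the same route as the paper. You identify $X$ with a V-bounded GSP via Theorem~\ref{ch18:the10}.(d), apply Theorem~\ref{ch18:the17}, and pass back to harmonizable processes via Theorem~\ref{ch18:the10} and the remark after Proposition~\ref{ch18:pro13}. You then get $X_\eta(x)\to X(x)$ uniformly on compacta by expanding $\|X_\eta(x)-X(x)\|^2$ into integrals of $h$ against Dirac nets, where the paper appeals to the vague continuity of $\tilde{\rho}$. Your formula $X_\eta(x)=u_\beta(x)\,\rho(T_x e_\alpha^{\checkmark})$ is actually the one consistent with Definition~\ref{ch18:def10}, whereas the paper writes $\tilde{\rho}(u_\beta\, T_x e_\alpha)$; this slip is harmless, since either expression converges to $X(x)$.
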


\begin{proof}
Any continuous V-bounded stochastic process can be
identified with a uniquely determined V-bounded GSP. Now we apply
Theorem~\ref{ch18:the17}. The approximating harmonizable GSPs can be identified with
harmonizable stochastic processes.
The following holds (cf.\ the proof of Theorem~\ref{ch18:the10}):
\[
\tilde{\rho}_{\eta }(\delta_{x}) =
\tilde{\rho}(u_{\beta}(e_{\alpha}\ast {}\delta_{x})) =
 \tilde{\rho}(u_{\beta}(T_{x}e_{\alpha})
 \rightarrow \tilde{\rho}(\delta _{x})
\]
uniformly on compact sets by the vague continuity
of $\tilde{\rho}$.\qed
\end{proof}

We conclude this paper with some remarks concerning the
dilation theory for GSPs.  As will be shown, our setting is also suitable
to describe the dilation theorem for stochastic processes.  More
precisely, we want to point out that any V-bounded GSP is the projection
of a stationary GSP (that means that there is a stationary dilation).
For stochastic processes this was first shown by H.  Niemi in
\cite{ni75-4} using the main result of \cite{ni77}.
We will use the invariance of GSPs under the Fourier transform
to obtain this theorem for GSPs as a direct corollary of the main result
of \cite{ni77}. Due~to the theory we have developed so far, the
dilation theorem for stationary stochastic processes is obtained as a
corollary as well.

\begin{definition}\label{ch18:def11}
Let $\mathcal{H} \subset \tilde{\mathcal{H}}$ be two
Hilbert spaces, $\rho $ a GSP with
$\left\{ \rho (f) \mid f {\in} {S_{0}(G)}
\right\}\bar{\;} = \mathcal{H}$.
A GSP $\tilde{\rho}$ into
$\tilde{\mathcal{H}}$ is called a \textbf{dilation} of
 $\rho $ if
\[
\rho (f) = P\bigl( \tilde{\rho}(f)\bigr) \
\forall \, f \in {S_{0}(G)} ,
\]
$P$ denoting the orthogonal projection from $\tilde{\mathcal{H}}$
into $\mathcal{H}$.
\end{definition}

\setcounter{theorem}{18}
\begin{theorem}\label{ch18:the19}
For any bounded GSP $\rho $, there exists a dilation $\tilde{\rho}$ into
$\tilde{\mathcal{H}}$ which is orthogonally scattered and bounded.
\end{theorem}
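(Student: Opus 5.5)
The plan is to reduce the statement to a factorization of $\rho$ through $L^2(\mu)$ for a suitable bounded positive measure $\mu$ on $G$, and then dilate the resulting contraction to an isometry. This makes the covariance of the dilation equal to $\int f\bar g\,d\mu$, which is diagonal in the sense of Corollary~\ref{ch18:cor2}.

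\textbf{Step 1: extend $\rho$ to $C_0(G)$.} Since $\rho$ is bounded, Proposition~\ref{ch18:pro12} lets me extend it uniquely to a bounded linear operator $\rho\colon C_0(G)\to\mathcal{H}$. This uses the density of ${S_{0}(G)}$ in $C_0(G)$.

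\textbf{Step 2: the factorization (main obstacle).} Every bounded operator from a $C_0$-space into a Hilbert space is $2$-summing, by Grothendieck's inequality. Pietsch's factorization theorem then yields a positive bounded Radon measure $\mu$ on $G$ such that
\[
\|\rho(f)\|_{\mathcal{H}}^2 \le \int_G |f|^2\,d\mu \qquad \forall\, f\in C_0(G).
\]
This is the only non-elementary ingredient, and it is exactly where the result of Niemi \cite{ni77} would be invoked instead, if one prefers to quote it. The measure $\mu$ may be taken with total mass controlled by a constant (Grothendieck's constant) times $\|\rho\|^2$. To make the inequality a genuine contraction bound I rescale $\mu$ if necessary, and I keep this normalization in the next step.

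\textbf{Step 3: isometric dilation.} By Step~2 the map $A\colon f\mapsto\rho(f)$, defined on the dense subspace $C_0(G)\subset L^2(\mu)$, extends to a contraction $A\colon L^2(\mu)\to\mathcal{H}$. Put $\mathcal{K}:=L^2(\mu)$ and $D:=(I-A^*A)^{1/2}$ on $\mathcal{K}$. Define $\tilde{\mathcal{H}}:=\mathcal{H}\oplus\mathcal{K}$ and
\[
\tilde\rho(f):=\bigl(Af,\,Df\bigr), \qquad f\in{S_{0}(G)}.
\]
Then $\|\tilde\rho(f)\|^2=\|Af\|^2+\langle (I-A^*A)f,f\rangle=\|f\|_{L^2(\mu)}^2$. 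By polarization,
\[
(\tilde\rho(f)|\tilde\rho(\bar g))=\int_G f g\,d\mu \qquad \forall\, f,g\in{S_{0}(G)}.
\]

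\textbf{Step 4: verify the required properties.}
\begin{enumerate}
\item[(i)] \emph{$\tilde\rho$ is a GSP.} It is linear, and it is bounded on ${S_{0}(G)}$ because $\|\tilde\rho(f)\|\le \mu(G)^{1/2}\|f\|_\infty$ and $\|f\|_\infty\lesssim\|f\|_{S_0}$.
\item[(ii)] \emph{$\tilde\rho$ is bounded and orthogonally scattered.} The same estimate $\|\tilde\rho(f)\|\le \mu(G)^{1/2}\|f\|_\infty$ gives boundedness. By the covariance formula in Step~3, the backward direction of Corollary~\ref{ch18:cor2} applies. More directly, functions with disjoint supports are orthogonal in $L^2(\mu)$, so their images under $\tilde\rho$ are orthogonal.
\item[(iii)] \emph{$\tilde\rho$ is a dilation of $\rho$.} The orthogonal projection $P$ of $\tilde{\mathcal{H}}$ onto the first summand $\mathcal{H}$ satisfies $P\tilde\rho(f)=Af=\rho(f)$. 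The standing hypothesis $\overline{\{\rho(f)\}}=\mathcal{H}$ from Definition~\ref{ch18:def11} is unaffected by the construction.
\end{enumerate}

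Everything except Step~2 is routine Hilbert-space algebra. I therefore expect Step~2 to be the main obstacle: producing a single positive measure $\mu$ that dominates the whole bimeasure $\sigma_\rho$ in the form $|\langle\sigma_\rho,f\otimes\bar f\rangle|\le\int|f|^2d\mu$. I would obtain it via Grothendieck's inequality for bimeasures, which gives exactly this kind of domination, or cite \cite{ni77} for it.
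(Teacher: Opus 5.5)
Your argument is correct. Its first step is the paper's: Proposition~\ref{ch18:pro12} turns $\rho$ into a bounded operator on $C_0(G)$, i.e.\ a bounded vector measure. From there the paper simply cites Niemi's Theorem~13 in \cite{ni77}, whereas you build the dilation yourself. The little Grothendieck theorem and Pietsch factorization give a bounded positive measure $\mu$ with $\|\rho(f)\|_{\mathcal{H}}^2\le\int_G|f|^2\,d\mu$. The defect-operator construction $f\mapsto\bigl(Af,(I-A^*A)^{1/2}f\bigr)\in\mathcal{H}\oplus L^2(\mu)$ then turns the contraction $A\colon L^2(\mu)\to\mathcal{H}$ into an isometry.

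Your route gives a self-contained proof inside the GSP framework. It has an explicit $\tilde{\mathcal{H}}=\mathcal{H}\oplus L^2(\mu)$ and an explicit covariance $\langle\sigma_{\tilde\rho},f\otimes g\rangle=\int_G fg\,d\mu$, exactly of the form in Corollary~\ref{ch18:cor2}. It also bounds the mass, $\mu(G)\le k_G^2c^2$, where $k_G$ is the little Grothendieck constant and $c$ the boundedness constant of $\rho$. And it shows that the only deep input is Grothendieck's inequality, which the citation hides. The paper's version is shorter but delegates all the analytic content.

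Two details are worth stating explicitly:
\begin{enumerate}
\item[(i)] The Pietsch measure a priori lives on the weak$^*$-closure of $\{\delta_x : x\in G\}$ in the unit ball of $M(G)$, i.e.\ on the one-point compactification. Discarding the point at infinity is harmless, since elements of $C_0(G)$ vanish there.
\item[(ii)] If $\mathcal{H}$ is larger than the closed span of $\rho({S_{0}(G)})$, replace it by that span, as Definition~\ref{ch18:def11} presupposes; nothing else in your construction changes.
\end{enumerate}
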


\begin{proof}
Due to the equivalence between bounded vector
measures and bounded GSPs (cf. Proposition~\ref{ch18:pro12}), we may refer to the proof
for vector measures which is given in \cite{ni77}, Theorem~13.\qed
\end{proof}

\setcounter{corollary}{19}
\begin{corollary}
\label{ch18:cor20}
Let $\rho $ be a GSP. Then
 $\rho$ is V-bounded  if and only if there exists a dilation
$\tilde{\rho}$ which is V-bounded and stationary.
\end{corollary}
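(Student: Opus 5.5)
The plan is to transfer Theorem~\ref{ch18:the19} to the Fourier side using the spectral process. The key dictionary is Lemma~\ref{ch18:lem3}: $\rho$ is V-bounded exactly when $\hat{\rho}$ is bounded, and $\rho$ is stationary exactly when $\hat{\rho}$ is frequency stationary. By Corollary~\ref{ch18:cor6}.(a), applied on $\hat{G}$, the latter is the same as $\hat{\rho}$ being orthogonally scattered. In the same way, a dilation $\tilde{\rho}$ of $\rho$ is V-bounded and stationary if and only if $\hat{\tilde{\rho}}$ is bounded and orthogonally scattered.

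For the forward direction, assume $\rho$ is V-bounded. Then $\hat{\rho}$ is a bounded GSP on $\hat{G}$, and its range $\{\hat{\rho}(f)\} = \{\rho(\hat f)\}$ has the same closure $\mathcal{H}$, because $f\mapsto \hat f$ maps $S_0(\hat G)$ onto $S_0(G)$. Theorem~\ref{ch18:the19} then gives a bounded, orthogonally scattered GSP $\omega$ on $\hat{G}$ with values in some $\tilde{\mathcal{H}}\supset\mathcal{H}$ and $P\omega(f)=\hat{\rho}(f)$ for all $f$. Define $\tilde{\rho}:=\check{\omega}$, using the inverse from Lemma~\ref{ch18:lem3}.(c) and the remark after it, so that $\hat{\tilde{\rho}}=\omega$. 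For $g\in S_0(G)$ write $g=\hat f$; then
\[
P\tilde{\rho}(g) = P\omega(f) = \hat{\rho}(f) = \rho(g),
\]
so $\tilde{\rho}$ is a dilation of $\rho$. Since $\hat{\tilde\rho}=\omega$ is bounded, $\tilde{\rho}$ is V-bounded by Lemma~\ref{ch18:lem3}.(a). Since $\omega$ is orthogonally scattered, Corollary~\ref{ch18:cor6}.(a) on $\hat G$ makes it frequency stationary, and Lemma~\ref{ch18:lem3}.(b) then makes $\tilde{\rho}$ stationary.

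For the converse, assume $\tilde{\rho}$ is a V-bounded stationary dilation. Since $\|P\|\le 1$,
\[
\|\rho(f)\|_{\mathcal H}=\|P\tilde\rho(f)\|\le \|\tilde\rho(f)\|\le c\|\hat f\|_\infty ,
\]
so $\rho$ is V-bounded; stationarity is not even needed here.

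The step I expect to need the most care is the bookkeeping of Fourier conventions. One has to check that $\check{\omega}$ really satisfies $\hat{\check\omega}=\omega$ with the paper's definitions, where $\check{\rho}$ is written as a process on $\hat G$ but is meant to act on $S_0(G)$. Equally, the equivalences of Lemma~\ref{ch18:lem3} and Corollary~\ref{ch18:cor6}.(a) must be applied with the roles of $G$ and $\hat G$ exchanged via Pontryagin duality. The density hypothesis in the definition of a dilation transfers immediately, as noted above.
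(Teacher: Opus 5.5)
Your proposal is correct and takes the same route as the paper. You pass to the spectral process $\hat\rho$ via Lemma~\ref{ch18:lem3}, apply Theorem~\ref{ch18:the19} to obtain a bounded orthogonally scattered dilation of $\hat\rho$, transfer it back by the inverse Fourier transform (using Corollary~\ref{ch18:cor6}.(a) and Lemma~\ref{ch18:lem3}.(b) for stationarity), and prove the converse with $\|P\|\le 1$. You also check the dilation identity $P\tilde\rho(g)=\rho(g)$ and the density condition explicitly, which the paper leaves as ``easy to see.''
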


\begin{proof}
$(\Rightarrow )\,\, \rho$ V-bounded $\Rightarrow \hat{\rho}$
bounded. Theorem~\ref{ch18:the19} implies: $\exists$ dilation $(\hat{\rho}) \tilde{\;} $ of $\hat{\rho}$
which is bounded and orthogonally scattered. It is easy to see that this
implies: $\tilde{\rho} := ((\hat{\rho})\tilde{\;}) \hat{\;}
\check{\;}$
which is V-bounded and stationary is a dilation of $\rho $.

$(\Leftarrow )$ As $\| \rho (f)\| _{\mathcal{H}} \le
\| \tilde{\rho}(f)\|_{\tilde{\mathcal{H}}}  \
\forall \,  f \in
 {S_{0}(G)}$ it is clear that a GSP  with
a V-bounded dilation is V-bounded itself. \qed
\end{proof}

As any continuous V-bounded stochastic process can be
identified with a V-bounded GSP and vice versa
the same result is proved for stochastic processes.  Since stationarity
implies V-boundedness for stochastic processes (cf. Corollary~\ref{ch18:cor15}) the
assumption of V-boundedness on the right-hand side can be omitted.

\begin{corollary}\label{ch18:cor21}
Let $X$ be a continuous stochastic process. Then
$X$  is V-bounded if and only if there exists
a stationary dilation of $X$.
\end{corollary}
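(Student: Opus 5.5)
The plan is to move the statement from stochastic processes to GSPs via Theorem~\ref{ch18:the10}, apply Corollary~\ref{ch18:cor20}, and translate back.

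\emph{Direction ($\Rightarrow$).} Let $X$ be continuous and V-bounded. The process must be bounded for Theorem~\ref{ch18:the10}.(d) to apply. A V-bounded process, read as a GSP, is $\|\cdot\|_\infty$-bounded on the Fourier side, so by Corollary~\ref{ch18:cor11} it is identified with a stochastic process with bounded continuous covariance. I will use this identification as the definition of a V-bounded continuous process. Then:
\begin{enumerate}
\item Let $\rho$ be the GSP attached to $X$, so that $\tilde\rho(\delta_x)=X(x)$. Replace $\mathcal H$ by the closed span of $\{\rho(f)\}$; this is also the closed span of $\{X(x)\}$, by vague density of discrete measures and the $w^*$-to-norm continuity in Theorem~\ref{ch18:the10}.(b).
\item Corollary~\ref{ch18:cor20} gives a stationary, V-bounded dilation $\tilde\rho_d$ into $\tilde{\mathcal H}\supset\mathcal H$.
\item V-boundedness of $\tilde\rho_d$ and Corollary~\ref{ch18:cor11} show that $\tilde\rho_d$ comes from a continuous bounded stochastic process $\tilde X(x)=\widetilde{\tilde\rho_d}(\delta_x)$. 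By Theorem~\ref{ch18:the1}.(a) its covariance $(\tilde X(x)|\tilde X(y))$ is diagonally invariant, so $\tilde X$ is stationary.
\item To check the dilation property pointwise, note that $P\circ\widetilde{\tilde\rho_d}$ and $\tilde\rho$ are both bounded on $M(G)$, agree on $S_0(G)$, and are $w^*$-to-norm continuous on bounded tight sets. Here I use that $P$ is a norm contraction. Approximating $\delta_x$ by a tight Dirac net $T_xe_\alpha$ therefore gives $P\tilde X(x)=X(x)$.
\end{enumerate}

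\emph{Direction ($\Leftarrow$).} Suppose $\tilde X$ is a stationary dilation of $X$ with $P\tilde X(x)=X(x)$. Continuity of $\tilde X$ should be part of the setting; if it is not given, I would deduce it from the continuity of $X$ only where needed. The argument is then:
\begin{enumerate}
\item Corollary~\ref{ch18:cor15} shows $\tilde X$ is V-bounded, i.e.\ the corresponding GSP $\tilde\rho_d$ is V-bounded.
\item Restricting the vector-valued integrals to $S_0(G)$ gives $\rho(f)=P\tilde\rho_d(f)$, since $P$ commutes with the weak integrals defining $\tilde\rho_1$ in Theorem~\ref{ch18:the10}.(d).
\item The easy direction of Corollary~\ref{ch18:cor20} ($\|\rho(f)\|\le\|\tilde\rho_d(f)\|$) then gives V-boundedness of $\rho$, and hence of $X$.
\end{enumerate}

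\emph{Main obstacle.} The delicate step is step 4 of ($\Rightarrow$): identifying the dilation at the level of points $\delta_x$ rather than only on test functions. This requires that the extension of $\tilde\rho_d$ to $M(G)$ be compatible with the projection. I would settle it by the uniqueness remark following Theorem~\ref{ch18:the10}, which says extensions that are $w^*$-to-norm continuous on bounded tight sets are determined by their values on $S_0(G)$. A secondary subtlety is the density hypothesis in Definition~\ref{ch18:def11}. It is handled by the reduction to the closed span in step 1, which is harmless because the closed span of $X$ and that of $\rho$ coincide.
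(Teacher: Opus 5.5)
Your proposal is correct and follows the paper's route: you identify continuous V-bounded processes with V-bounded GSPs via Theorem~\ref{ch18:the10} and Corollary~\ref{ch18:cor11}, apply Corollary~\ref{ch18:cor20}, and use Corollary~\ref{ch18:cor15} to drop V-boundedness of the stationary dilation. Your closed-span reduction and the pointwise check $P\tilde X(x)=X(x)$ via $w^*$-to-norm continuity spell out details the paper leaves implicit. One small correction: continuity of the stationary dilation cannot be deduced from continuity of $X$. For stationary continuous $X$, the process $X(x)\oplus e_x$ with $(e_x)_{x\in G}$ orthonormal is a discontinuous stationary dilation of $X$. So continuity has to be part of the hypothesis, which is exactly what the paper tacitly assumes when it invokes Corollary~\ref{ch18:cor15}.
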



\def\ackname{Acknowledgments}
\acknowledgement{
The author would like to thank
Goetz Pfander for providing access to his  series of new
papers prior to publication (during a conference in Argentina,
Summer 2013). The reader will find some additional information
in the PhD thesis of Wolfgang H\"ormann and a more recent
master thesis by Thomas Steger \cite{st09-4}.
The authors also have to
thank the editors of this volume for providing
the opportunity to show the gratefulness to Paul Butzer for his
influence on the first author, as a mathematician and as a person,
caring for science and for young(er) people.}



\end{document}